\documentclass[12pt, reqno, a4paper]{amsart}

\usepackage{ amssymb, amsmath, enumerate, amsfonts, amsthm, mathrsfs, url, bm, mathtools}

\usepackage{xcolor}  	
\usepackage{hyperref}
\hypersetup{
colorlinks,
   linkcolor={cyan!80!black},
   citecolor={cyan!80!black},
 urlcolor={cyan!80!black}
}

\usepackage{color}

\usepackage[margin=1in]{geometry}

\RequirePackage{doi}

\usepackage{amscd}
\usepackage{amsfonts}
\usepackage{float}
\usepackage{color}
\usepackage[
backend=biber,
style=alphabetic,
]{biblatex}
\usepackage{bookmark}

\renewbibmacro{in:}{}
\DeclareFieldFormat{title}{#1}

\DeclareFieldFormat[article]{title}{\mkbibemph{#1}}       
\DeclareFieldFormat[incollection]{title}{\mkbibemph{#1}}  
\DeclareFieldFormat[book]{title}{\mkbibemph{#1}}          
\DeclareFieldFormat[incollection]{booktitle}{#1}          
\DeclareFieldFormat[article]{journaltitle}{#1}            

\AtEveryBibitem{%
  \ifentrytype{misc}{\DeclareFieldFormat{title}{\mkbibemph{#1}}}{}}

\DeclareFieldFormat{eprint:eprint}{arXiv:\href{https://arxiv.org/abs/#1}{#1}}

\DeclareFieldFormat[inproceedings]{title}{\mkbibemph{#1}}

\DeclareFieldFormat[inproceedings]{booktitle}{#1}

\usepackage{amssymb}

\newtheorem{theorem}{Theorem}[section]
\newtheorem{lemma}{Lemma}[section]

\newtheorem{corollary}{Corollary}[section]
\newtheorem{proposition}{Proposition}[section]

\theoremstyle{definition}
\newtheorem{definition}{Definition}[section]

\theoremstyle{remark}

\numberwithin{equation}{section}

\newcommand{\Mod}[1]{\ (\mathrm{mod}\ #1)}

\renewcommand{\Re}{\mathrm{Re}}
\renewcommand{\Im}{\mathrm{Im}}

\newcommand{\R}{{\mathbb R}}

\newcommand{\Z}{{\mathbb Z}}
\newcommand{\E}{{\mathbb E}}

\renewcommand{\leq}{\leqslant}
\renewcommand{\geq}{\geqslant}

\renewcommand{\le}{\leqslant}
\renewcommand{\ge}{\geqslant}

\begin{document}

\title[Low moments of Hecke eigenvalue sums]{Low moments of Hecke eigenvalue sums}


\author{Jad Hamdan}
\address{Mathematical Institute, University of Oxford, Andrew Wiles Building\\ Radcliffe Observatory Quarter, Woodstock Rd\\
Oxford OX2 6GG\\
United Kingdom}
\email{hamdan@maths.ox.ac.uk}

\author{Sun-Kai Leung}
\address{Mathematical Institute, University of Oxford, Andrew Wiles Building\\ Radcliffe Observatory Quarter, Woodstock Rd\\
Oxford OX2 6GG\\
United Kingdom}
\email{sunkaileung@gmail.com}

\author{Mo Dick Wong}
\address{Department of Mathematics, Run Run Shaw Building, The University of Hong Kong, Pokfulam, Hong Kong}
\email{mdwong@hku.hk}

\subjclass[2020]{11F11, 11F30, 60G15}

\date{}

\dedicatory{}

\keywords{}

\begin{abstract}
We show that partial sums of the Sato--Tate random multiplicative functions introduced by Cogdell and Michel exhibit better-than-square-root cancellation. The proof proceeds via a connection to multiplicative chaos, following Harper's seminal work. By a non-trivial adaptation of Harper's derandomization argument for character sums, we also obtain upper bounds for low moments of Hecke eigenvalue sums and of Hecke eigenforms near the cusp $0$; to our knowledge, this is the first appearance of multiplicative chaos in the context of automorphic forms on $\mathrm{GL}(2)$. A novel ingredient is the introduction of Hecke $s$-norms.
\end{abstract}

\maketitle

\section{Introduction}
\textit{Random multiplicative functions} (RMFs) serve as models for classical multiplicative functions. For instance, the Steinhaus RMFs model Archimedean characters $n \mapsto n^{it}$ as $t$ varies over $[T,2T]$ for large $T$, or Dirichlet characters $\chi$ modulo a large prime (see \cite{MR1815216}, and \cite{harper2025bettersquarerootcancellationnumber} for an exposition). The extended Rademacher RMFs model real primitive characters $\chi_d$ as the fundamental discriminant $d$ varies over $[D,2D]$ for large $D$ (see \cite{MR2024414}, and \cite{delabretèche2026randommultiplicativefunctionsmaking} for the definition). To investigate the complex moments of symmetric power $L$-functions at $s=1$, Cogdell and Michel \cite{MR2035301} introduced the following RMFs, which we shall call the \textit{Sato--Tate random multiplicative functions}.

\begin{definition}[Sato--Tate random multiplicative function] \label{def:sato-tate}
Let $(\theta_p)_{p \text{ prime}}$ be a sequence of independent and identically distributed random variables, sampled from 
the Sato--Tate measure 
\begin{align*}
    \mu_{\mathrm{ST}}(d\theta) := \frac{2}{\pi} \sin^2\theta d\theta \qquad \text{for $\theta \in [0,\pi].$}
\end{align*}
We call $\mathbb{X}$ a \textit{Sato--Tate random multiplicative function} if, for each integer $v \geq 0,$ we define
\begin{align*}
\mathbb{X}(p^v):=
\sum_{j=0}^v e^{i(v-2j)\theta_p} 
\end{align*}
and extend the definition multiplicatively to any integer $n \geq 1$ by
\begin{align*}
\mathbb{X}(n):=\prod_{p} \mathbb{X}(p^{v_p(n)}),
\end{align*}
where $v_p(n)$ denotes the $p$-adic valuation of $n.$  
\end{definition}

In a breakthrough work, Harper \cite{MR4061962} determined the order of magnitude of low moments of partial sums of Steinhaus RMFs. The principal aim of this paper is to obtain the corresponding upper bound for Sato--Tate RMFs.\footnote{Concurrently, Max Wenqiang Xu and Junren Zheng obtained a lower bound of the same shape, which will also appear in the second author's upcoming work for a general class of RMFs. We thank them for generously sharing their results.} 

\begin{theorem} \label{thm:random}
Let $\mathbb{X}$ be a Sato--Tate random multiplicative function. Then uniformly for all sufficiently large $x$ and $0 \leq q \leq 1,$ we have
\begin{align*}
\mathbb{E} \left|\sum_{n \leq x}\mathbb{X}(n) \right|^{2q} \ll \left( \frac{x}{1+(1-q)\sqrt{\log \log x}} \right)^q.
\end{align*}
\end{theorem}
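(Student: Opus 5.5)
We follow Harper's strategy for Steinhaus random multiplicative functions, adapted to the Sato--Tate setting. The two inputs we need are: a conditional-expectation reduction to an Euler-product integral, and a multiplicative chaos estimate for that integral.

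\textbf{Reductions.} For $q\le 2/3$ (or any fixed $q<1$ bounded away from $1$) it suffices to prove the bound at one fixed $q_0$ by H\"older's inequality. For $1-q\gg 1/\sqrt{\log\log x}$ the claimed bound is $\asymp x^q/((1-q)\sqrt{\log\log x})^q$. For larger $q$ it is trivial from $\mathbb{E}|\sum_{n\le x}\mathbb{X}(n)|^2\ll x$, which follows from orthonormality of $\mathbb{X}(p^v)$, namely $\mathbb{E}\,\mathbb{X}(p^v)\overline{\mathbb{X}(p^w)}=\delta_{vw}$ (Schur orthogonality for $SU(2)$).

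\textbf{Conditioning on the largest prime factor.} Write $\sum_{n\le x}\mathbb{X}(n)=\sum_{P(n)\le x^{1/\log\log x}}+\sum_{k}\sum_{x^{e^{-(k+1)}}<P(n)\le x^{e^{-k}}}$ and split off the prime $p$ generating $P(n)$. Conditioning on $\theta_{p'}$ for $p'<p$, the variables $\mathbb{X}(p)$ for $p$ in a dyadic range are independent and centred with variance $1$. For $q\le 1$ the conditional Jensen/Marcinkiewicz--Zygmund inequality gives
\[
\mathbb{E}\Big|\sum_{p}\mathbb{X}(p)S_p\Big|^{2q}\le \mathbb{E}\Big(\sum_p |S_p|^2\Big)^q .
\]
Terms with $p^2\mid n$ are negligible. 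After Harper's smoothing, $\sum_{p}|S_{x/p}|^2$ is controlled via Parseval (Plancherel for Dirichlet series) by
\[
\frac{x}{\log x}\int_{-1/2}^{1/2}\big|F_k(1/2+it)\big|^2\,dt,
\]
where
\[
F_k(s)=\prod_{p\le x^{e^{-k}}}\sum_{v\ge0}\frac{\mathbb{X}(p^v)}{p^{vs}}=\prod_p \big(1-e^{i\theta_p}p^{-s}\big)^{-1}\big(1-e^{-i\theta_p}p^{-s}\big)^{-1}.
\]
So the theorem reduces to showing
\[
\mathbb{E}\Big(\frac{1}{\log y}\int_{-1/2}^{1/2}|F_y(1/2+it)|^2dt\Big)^q\ll \Big(\frac{1}{1+(1-q)\sqrt{\log\log y}}\Big)^q
\]
with $y=x^{e^{-k}}$, summable in $k$ thanks to the $e^{-kq}$-type savings.

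\textbf{Critical multiplicative chaos estimate.} This is the main obstacle. We have $\log|F_y(1/2+it)|^2\approx\sum_{p\le y}\frac{4\cos\theta_p\cos(t\log p)}{\sqrt p}+\sum_{p}\frac{2\cos 2\theta_p\cos(2t\log p)}{p}+\cdots$. Since $\mathbb{E}(2\cos\theta_p)^2=1$ and $\mathbb{E}\cos 2\theta_p=-1/2$, the prime-square terms do \emph{not} average to zero; they contribute a deterministic $\sim-\tfrac12\log\log y$ drift, and also a $t$-dependent bias near $t=0$. We must check that $\mathbb{E}|F_y(1/2+it)|^2\asymp\log y$ for $|t|\gg 1/\log y$ (the Rankin--Selberg type computation $\prod_p(1+1/p+\cdots)$ adjusted by these square terms), so that the normalized field $|F_y|^2/\mathbb{E}|F_y|^2$ is critical, exactly as in the Steinhaus case. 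The real-valued nature of $2\cos\theta_p\cos(t\log p)$ gives covariance $\sum_p\cos(t\log p)\cos(s\log p)/p$, which splits into $|t-s|$ and $|t+s|$ parts. Away from $t=0$ (the symmetric point $t\leftrightarrow -t$) this is log-correlated with the right variance; near $t=0$ we handle a window $|t|\le 1/\log y$ separately by a crude bound.

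With this in hand, we run Harper's argument. Condition on the event that the random walk $\sum_{p\le x^{e^{-j}}}$ of the Gaussian-approximated increments stays below a barrier $\sim\sqrt{\log\log y}$-shifted line, the ballot-type bound. We approximate the increments by Gaussians via moment-generating-function comparisons for $\mathbb{E}\exp(\lambda\cos\theta_p)$ under Sato--Tate, with the Bessel function expansion replacing Steinhaus. The $(1-q)$ gain arises as in Harper's work from the probability $\ll 1/\sqrt{\log\log y}$ of the barrier event, together with Girsanov-type tilting. The delicate point is verifying uniform two-point tilted estimates $\mathbb{E}\,|F(1/2+it)|^2|F(1/2+is)|^2\mathbf{1}_{\text{barrier}}$ with the extra $|t+s|$ correlation, which we expect to handle by restricting to $t,s$ of the same sign and noting the $t+s$ interaction is bounded for $|t|,|s|\gg 1/\log y$.
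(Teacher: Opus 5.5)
Your overall architecture (reduce to the $q$-th moment of an integral of $|F_y(\tfrac12+it)|^2$, then run a Gaussian-approximated barrier/ballot argument) matches the paper's. The conditioning step is also fine, via conditional expectation over the large primes and Lemma~\ref{lem:orthog}. However, the Sato--Tate-specific part has a genuine gap.

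\textbf{The reduction to $[-\tfrac12,\tfrac12]$.} Reducing to $\frac{1}{\log y}\int_{-1/2}^{1/2}|F_y(\tfrac12+it)|^2\,dt$ is not justified. Plancherel produces $\int_{\mathbb{R}}|F_y(\tfrac12+it)|^2|\tfrac12+it|^{-2}\,dt$, which is the form of Lemma~\ref{lem:random-reduce-to-mass}. In Harper's Steinhaus argument one passes to $[-\tfrac12,\tfrac12]$ only because $(f(p)p^{-iu})_p$ has the same law as $(f(p))_p$, so all unit windows are identically distributed. Here $\mathbb{X}(p)=2\cos\theta_p$ is real, and the law of $F_y(\tfrac12+it)$ genuinely depends on $t$. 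The prime-square terms give $\log|F_y|^2$ a mean $\approx-\sum_{p\le y}\cos(2t\log p)/p$. The covariance of the fields at $t$ and $s$ contains $\tfrac12\sum_p\cos((t+s)\log p)/p$. For $t$ near an integer $n\ge 1$ these are $O(\log\log(3+n))$. At a scale $\log p\asymp X$ they become negligible only once $X\gg(\log n)^2$, because of the error term $(1+|\tau|)Xe^{-c\sqrt X}$ in Lemma~\ref{lem:prime-sum}. So you must treat each window $[n,n+1]$ separately:
\begin{itemize}
\item split off the primes with $\log p\le e^{a_n}\asymp(\log(10+n))^2$ and bound them by independence and Jensen, losing $(\log(10+n))^{2(1-q)}$;
\item run the chaos argument on the remaining primes, where the oscillatory sums at level $k$ are $\ll e^{-k}$ (the paper's admissibility condition);
\item sum against $n^{-2q}$, which is where $q\ge 2/3$ is used.
\end{itemize}
Your proposal never addresses $|t|\ge 1$.

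\textbf{The region near $t=0$.} Your key claim that the $t+s$ interaction is bounded for $|t|,|s|\gg 1/\log y$ is false. In fact $\sum_{p\le y}\cos((t+s)\log p)/p\approx\log\min\{\log y,1/|t+s|\}$, which is of order $\log\log y$ when $|t|,|s|\asymp 1/\log y$. Likewise there is no uniform $\log\log y$-size drift: $-\sum_{p\le y}\cos(2t\log p)/p$ is $\approx-\log(1/|t|)$ for $1/\log y\le|t|\le 1$ and $O(1)$ for $|t|\asymp 1$. For $|t|\asymp e^{-r}$, the primes with $\log p\lesssim e^{r}$ have $\cos(t\log p)\approx 1$, so they carry doubled variance and a drift. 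The field on them is therefore not the standard log-correlated field your two-point barrier estimates assume.

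The paper handles this in two steps. It first uses the exact identity $\mathbb{E}|F_y(\tfrac12+it)|^2=\prod_{p\le y}(1-1/p)^{-1}$, valid for every $t$, to bound the range $|t|\le(\log y)^{-1/2}$ crudely by Jensen. Then, for $t\in[e^{-r-1},e^{-r}]$ with $r\le\tfrac12\log\log y-O(1)$, it factors out the primes with $\log p\le e^{r+a_*}$. Their second moment $\asymp e^{r}$ is compensated by the window length $e^{-r}$. On the remaining $\asymp\log\log y$ levels, $\cos(2t\log p)$ oscillates, admissibility holds, and the ballot argument goes through.

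Without these two localisations in $t$, your chaos step does not apply as stated.
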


Sato--Tate RMFs model \textit{Hecke eigenvalues} $\lambda_f$ as $f$ varies over an orthogonal basis of \textit{normalized Hecke eigenforms} of large prime level (see \cite[p.~1570]{MR2035301} and \cite[p.~410]{MR3918448} for details).

Given integers $k \in 2\mathbb{N}$ and $N \in \mathbb{N}$, let $\mathcal{S}_k(\Gamma_0(N))$ denote the space of holomorphic cusp forms of weight $k$ and level $N.$ We also denote by $\mathcal{B}_k(N)$ an orthogonal basis of $\mathcal{S}_k(\Gamma_0(N)),$ and by $\mathcal{H}_k(N)$ such a basis consisting of normalized Hecke eigenforms (see \cite[Chapter 14]{MR2061214} for definitions). Every holomorphic cusp form $f \in \mathcal{S}_k(\Gamma_0(N))$ admits a Fourier expansion at the cusp $\infty$ given by
\begin{align*}
f(z)=\sum_{n \geq 1} \lambda_f(n)n^{\frac{k-1}{2}}e(nz) \qquad  \text{for $z \in \mathbb{H}$.}
\end{align*}
Here and throughout, we write $e(z):=e^{2\pi iz}.$
The coefficients $\lambda_f(1), \lambda_f(2),\ldots$ are called \textit{Hecke eigenvalues} if $f$ is a normalized Hecke eigenform.

Instead of varying uniformly over an orthogonal basis of holomorphic cusp forms, it is also natural to take the \textit{harmonic average}.

\begin{definition}[Harmonic average]
Given a function $\Phi : \mathcal{B}_k(N) \to \mathbb{C},$ we define its \textit{harmonic sum over the basis $\mathcal{B}_k(N)$} by
\begin{align*}
\mathbb{\sum}_{f \in \mathcal{B}_k(N)}^h \Phi(f) :=\frac{\Gamma(k-1)}{(4\pi)^{k-1}} \sum_{f \in \mathcal{B}_k(N)} \frac{\Phi(f)}{\|f\|_{\rm Pet}^2},
\end{align*}
where $\| \cdot\|_{\rm Pet}$ is the Petersson norm, and its \textit{normalized harmonic average over the basis $\mathcal{B}_k(N)$} by
\begin{align*}
\mathbb{E}_{f \in \mathcal{B}_k(N)}^h  \Phi(f):=\left( \mathbb{\sum}_{f \in \mathcal{B}_k(N)}^h 1 \right)^{-1} \mathbb{\sum}_{f \in \mathcal{B}_k(N)}^h \Phi(f).
\end{align*}
\end{definition}
Lamzouri \cite{MR3918448} initiated the study of partial sums of Hecke eigenvalues using Sato--Tate RMFs. Motivated by Harper's work \cite{harper2023typicalsizecharacterzeta} on low moments of character sums, we use Theorem \ref{thm:random} to establish an analogous upper bound for the low moments of partial sums of Hecke eigenvalues.\footnote{Concurrently, Max Wenqiang Xu and Junren Zheng obtained an analogous result in the weight aspect. We thank them for generously sharing their results.}
\begin{theorem} \label{thm:det}
Let $k \geq 2$ be an even integer and $N$ be a sufficiently large prime. Then uniformly for any $0 \leq q \leq 1$ and $1 \leq x \leq N_k,$ we have
\begin{align*}
\mathbb{E}_{f \in \mathcal{H}_k(N)}^h \left|\sum_{n \leq x}\lambda_f(n) \right|^{2q}  
\ll_{k} \left( \frac{x}{1+(1-q)\sqrt{\log \log (10L_k)}} \right)^q,
\end{align*}
where 
\begin{align*}
N_k:=
\begin{cases}
N/\log N & \mbox{{\normalfont if $k=2,$} } \\
\hfil N & \mbox{{\normalfont otherwise,} }
\end{cases}
\end{align*}
and $L_k:=\min\{ x,N_k/x \}.$
\end{theorem}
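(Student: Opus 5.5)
We follow Harper's derandomization strategy for character sums \cite{harper2023typicalsizecharacterzeta}. The harmonic average over $\mathcal{H}_k(N)$ then plays the role of the expectation over Sato--Tate random multiplicative functions, with the Petersson trace formula taking the place of orthogonality of characters.

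\emph{Step 1: reduction to small $x$.} We first treat the range $x \le \sqrt{N_k}$, so that $L_k = x$. For $\sqrt{N_k} < x \le N_k$ we use the Voronoi summation formula for level $N$ eigenforms, together with the Atkin--Lehner involution $W_N$, under which $\lambda_f(n)$ is replaced by $\eta_f\overline{\lambda_f(n)}$ up to controlled factors at $p=N$. This expresses $\sum_{n\le x}\lambda_f(n)$ through a smoothed dual sum of length about $N/x = N_k/x$ (times a logarithm when $k=2$), multiplied by $\sqrt{x/(N/x)}$. Applying the small-$x$ result to the dual sum then gives the bound with $L_k = N_k/x$. The Atkin--Lehner sign is a unimodular twist, and it does not affect $|\cdot|^{2q}$ after we condition on it, which leaves a family still well modelled by Sato--Tate.

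\emph{Step 2: small $x$, conditioning on the small and large primes.} Following Harper, we write $n = m\ell$, where $m$ is $P$-smooth with $P = x^{1/(\log\log x)^{c}}$ and $\ell$ has all its prime factors large. Since $\lambda_f$ is multiplicative only for coprime arguments, we handle the prime-power parts with the Hecke relations. As in the random case, we reduce $\mathbb{E}|\sum_{n\le x}\lambda_f(n)|^{2q}$ to a quantity of the shape
\[
\mathbb{E}^h_{f}\Bigl( \int_{\mathbb R} \frac{|F_f(1/2+it)|^2}{|1/2+it|^2}\,dt \Bigr)^{q}\cdot \frac{x}{\log x},
\]
where $F_f$ is the Euler product of $\lambda_f$ over primes $\le P$. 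To compare this with the random model we use the Hecke $s$-norm idea. We weight by a Dirichlet polynomial of length $N^{\delta}$, approximating $|F_f|^{2q-2}$ or a suitable indicator of Harper's ``good event'' that the multiplicative-chaos barrier is respected. We then evaluate $\mathbb{E}^h_f$ of products of $\lambda_f$ at integers $\le N^{1-\epsilon}$ by the Petersson formula. Its diagonal term reproduces exactly the Sato--Tate moment, because of the Hecke relations and the fact that Petersson orthogonality corresponds to the Sato--Tate measure. The Kloosterman off-diagonal is negligible, since the total length stays below $N_k$; for $k=2$ we need the extra $\log N$ loss, and this is why $N_k = N/\log N$ in that case. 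Hölder's inequality, as in Harper's deterministic argument, converts the $q$-th moment into such polynomial averages.

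\emph{Step 3: the random input.} With the comparison in hand, the conditional random estimates from the proof of Theorem \ref{thm:random} (the multiplicative chaos bounds restricted to the barrier event) give the factor $(1+(1-q)\sqrt{\log\log x})^{-q}$.

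\emph{Main obstacle.} We expect the main obstacle to be Step 2: replacing the event-based conditioning of the random proof with short Dirichlet polynomials whose Petersson averages match the Sato--Tate expectations. Unlike characters, $\lambda_f(p)^j$ is not multiplicative in $j$, so the approximating polynomials must be expanded in Chebyshev polynomials $\mathbb{X}(p^v)$, and their coefficients must be controlled uniformly. Our first attempt would be to use truncated Taylor expansions of $\exp(\sum_p \lambda_f(p)p^{-1/2-it})$ in the Hecke basis, with the lengths chosen so that the products stay below $N^{1-\epsilon}$.
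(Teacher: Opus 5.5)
The overall strategy, Harper's derandomization with the Petersson formula replacing orthogonality, is the paper's. However, Step~1 does not work as stated, and it is tied to your choice of $P$ in Step~2. Voronoi summation applied to the sharp cutoff $n\le x$ does not give a sum of length $N/x$. The dual side is an infinite series whose Bessel weights, of argument $4\pi\sqrt{nx/N}$, oscillate with only polynomial decay throughout $n>N/x$. Smoothing the cutoff costs a short sum near $x$ and still leaves oscillatory Hankel-transform weights. Your small-$x$ bound concerns unweighted sharp sums, and transferring it by partial summation with absolute values loses more than the $\sqrt{\log\log L_k}$ you want to gain.

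The paper uses no duality at all. The only place $x$ meets $N$ is the length condition $xP^{4SY}<N_k$ in Proposition~\ref{prop:bridge} (and $xU^{\ell}<N_k$ in Proposition~\ref{prop:twistedmoments}). With $Y=2M+1\asymp\log^{1.02}P$, $J\asymp\log\log P$ and $\delta^{-1}=\log^{1.2+o(1)}P$, one has $SY=\log^{4.44+o(1)}P$, hence $P^{4SY}=\exp(\log^{5.44+o(1)}P)$. Taking $P$ just below $\exp(\log^{1/6}L_k)$ therefore gives $P^{4SY}<L_k\le N_k/x$ for every $x\le N_k$ with $L_k$ large; bounded $L_k$ is handled directly by Lemma~\ref{lem:largesieve}. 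Since the chaos saving is $\sqrt{\log\log P}\asymp\sqrt{\log\log L_k}$, this is exactly where $L_k=\min\{x,N_k/x\}$ comes from.

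Conversely, your $P=x^{1/(\log\log x)^{c}}$ is far too large to derandomize. The Taylor polynomials of the conditioning functions have length $P^{O(SY)}$ with $SY$ a large power of $\log P$, which vastly exceeds $N$. You are conflating two scales. The cutoff $x^{1/\log\log x}$ only discards smooth $n$ and isolates a large prime factor inside the random model. The Euler product one conditions on must have $\log P$ a small power of $\log(N_k/x)$.

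Second, the reduction ``as in the random case'' to $\mathbb{E}^h_f\bigl(\int_{\mathbb{R}}|F_f(\tfrac12+it)|^2|\tfrac12+it|^{-2}\,dt\bigr)^q$ cannot be carried out on the deterministic side, because it relies on independence of the large primes from the small ones. In the paper the order is reversed. Lemma~\ref{lem:partition} and H\"older's inequality give the bound
\[
\sum_{\boldsymbol j}\sigma_f(\boldsymbol j)^{1-q}\Bigl(\mathbb{E}^h_f\prod_{|h|\le M}g_{j(h)}(S_h(f))\Bigl|\sum_{n\le x}\lambda_f(n)\Bigr|^2\Bigr)^q,
\qquad \sigma_f(\boldsymbol j):=\mathbb{E}^h_f\prod_{|h|\le M}g_{j(h)}(S_h(f)).
\]
Proposition~\ref{prop:bridge} then replaces both factors by their Sato--Tate counterparts, and only afterwards is the random reduction performed. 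The weights are the bounded functions $g_j(S_h(f))$, Taylor-expanded to degree $2S-1$. A polynomial approximation of the unbounded $|F_f|^{2q-2}$ has no comparable remainder control.

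Nor does staying below $N_k$ by itself make the off-diagonal negligible. Lemma~\ref{lem:largesieve} leaves an error $(xP^{2SY}/N)^{k-1}\|\boldsymbol\alpha\|_2\|\boldsymbol\beta\|_2$ that must absorb Taylor coefficients of size about $(2\pi/\delta)^{\ell}/\ell!$. The Taylor remainders also need twisted even moments under $\mathbb{E}^h_f$. Both are supplied by Proposition~\ref{prop:twistedmoments}, proved through Hecke $s$-norm bounds for the lowering, diagonal and raising operators; you name this ingredient but never set it up.

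Finally, the random input in Step~3 is not Theorem~\ref{thm:random} itself. It is the discretised chaos bound summed over shifts $v$ (Lemma~\ref{lem:discretemultchaos}), together with control of the bad sets for $v\ne0$ via Lemma~\ref{lem:beta}. This is needed because the Sato--Tate Euler product is not translation-invariant in $t$.
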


 As an immediate consequence, the partial sums of Hecke eigenvalues typically exhibit better-than-square-root cancellation.

\begin{corollary} \label{cor:markov} 
Let $k \geq 2$ be an even integer and $N$ be a sufficiently large prime. Then uniformly for any $t \geq 2$ and $1 \leq x \leq  N_k,$ we have
\begin{align*}
\mathbb{P}^h_{f \in \mathcal{H}_k(N)}\left( \left|\sum_{n \leq x}\lambda_f(n) \right| > t \cdot \frac{\sqrt{x}}{(\log \log (10L_k))^{1/4}} \right) \ll_{k}
\frac{\min \{ \log t, \sqrt{\log \log (10L_k)} \}}{t^2}.
\end{align*}
\end{corollary}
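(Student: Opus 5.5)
The plan is to deduce Corollary \ref{cor:markov} from Theorem \ref{thm:det} by Markov's inequality, choosing the exponent $q$ as a function of $t$. Write $S_f(x):=\sum_{n\le x}\lambda_f(n)$, $V:=\sqrt{\log\log(10L_k)}\ge c>0$ and $\Lambda:=t\sqrt{x}/V^{1/2}$. For any $0\le q\le 1$, Markov's inequality applied to $|S_f(x)|^{2q}$ under the harmonic probability measure, together with Theorem \ref{thm:det}, gives
\begin{align*}
\mathbb{P}^h_{f}\left(|S_f(x)|>\Lambda\right) \le \frac{\mathbb{E}^h_f |S_f(x)|^{2q}}{\Lambda^{2q}} \ll_k \left(\frac{x}{1+(1-q)V}\right)^q \frac{V^{q}}{t^{2q}x^{q}} = \frac{1}{t^{2q}}\left(\frac{V}{1+(1-q)V}\right)^{q}.
\end{align*}
The implied constant is uniform in $q$, so it only remains to optimize over $q$.

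\textbf{Case $\log t\ge V$.} Take $q=1$. The bound becomes $V/t^2$, which equals the claimed $\min\{\log t,V\}/t^2$.

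\textbf{Case $\log t<V$.} Take $1-q=1/\log t$. This is admissible since $t\ge 2$ gives $\log t\ge \log 2$; if $\log t<1$, use $q=0$ instead, or simply absorb the case of bounded $t$ into the constant. Then
\begin{align*}
t^{-2q}=t^{-2}e^{2(1-q)\log t}=e^2t^{-2}.
\end{align*}
Moreover $1+(1-q)V\ge V/\log t$, so
\begin{align*}
\frac{V}{1+(1-q)V}\le \log t .
\end{align*}
Hence the factor $\left(V/(1+(1-q)V)\right)^q$ is at most $\max\{1,\log t\}^q\ll \max\{1,\log t\}$, and the whole bound is $\ll \log t/t^2=\min\{\log t,V\}/t^2$ for $t\ge 2$.

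No step is a genuine obstacle: the content is entirely in Theorem \ref{thm:det}. The only care needed is choosing $q$ near $1$ at scale $1/\log t$, which balances the loss $t^{2(1-q)}$ against the gain in the denominator. One should also check that the probability is normalized by the harmonic total mass, which matches the normalized average $\mathbb{E}^h$ used in Theorem \ref{thm:det}.
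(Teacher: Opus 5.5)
Your proposal is correct and follows the paper's own route: the paper deduces the corollary from Theorem \ref{thm:det} via Markov's inequality, deferring to Harper for the choice $q=1$ when $\log t\ge\sqrt{\log\log(10L_k)}$ and $q=1-1/\log t$ otherwise. Your handling of $2\le t<e$ by the trivial bound $1$ correctly covers the range where that choice of $q$ would be negative.
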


 Theorem \ref{thm:det} can also be used to bound the low moments of Hecke eigenforms near the cusp $0$, analogously to \cite[Corollary 2]{harper2023typicalsizecharacterzeta}. Moreover, we identify a phase transition at height $N^{-1}$, with multiplicative-chaos behavior persisting down to this scale and exponential cusp decay taking over below it.
\begin{theorem} \label{thm:modform}
Let $k \geq 2$ be an even integer and $N$ be a sufficiently large prime. Then uniformly for any $0 \leq q \leq 1$ and $r \geq 1,$ we have
\begin{align*} 
\mathbb{E}_{f \in \mathcal{H}_k(N)}^h |f(i/r)|^{2q} \ll_{k} 
\begin{cases}
\left( \dfrac{r^k}{1+(1-q)\sqrt{\log \log (10R)}} \right)^q
& \mbox{{\normalfont if $1 \leq r \leq N,$} } \\
\hfil \left( \left(\dfrac{r^{2}}{N}\right)^k \exp\left(- \dfrac{4\pi r}{N}\right) \right)^q
& \mbox{{\normalfont if $r > N$,} }
\end{cases}
\end{align*}
where  $R:=\min\{ r,N/r \}.$ 
\end{theorem}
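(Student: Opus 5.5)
We have to bound $f(i/r)$ for $f\in\mathcal{H}_k(N)$, which is a question about the cusp $0$. The plan is to move it to the cusp $\infty$ with the Fricke involution and then feed in Theorem \ref{thm:det}.

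\textbf{Step 1 (Fricke involution).} For $N$ prime and $f\in\mathcal{H}_k(N)$ a newform, $f|_k W_N=\eta_f f$ with $|\eta_f|=1$. Here $W_N$ is the matrix with rows $(0,-1)$ and $(N,0)$. Unwinding the slash action at $z=i/r$ gives
\begin{align*}
f(i/r)=\eta_f\,(ir)^{k}N^{-k/2}\, f\!\left(\frac{ir}{N}\right) \quad (\text{up to a unimodular constant}).
\end{align*}
Put $y:=r/N$. Inserting the Fourier expansion at $\infty$,
\begin{align*}
|f(i/r)| = \frac{r^k}{N^{k/2}}\left|\sum_{n\ge1}\lambda_f(n)n^{\frac{k-1}{2}}e^{-2\pi n y}\right|.
\end{align*}
When $N$ is prime and $k$ is small, oldforms do not occur, so this applies to every element of the basis.

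\textbf{Step 2 (the regime $r>N$).} Now $y>1$ and the series is dominated by $n=1$. Deligne's bound $|\lambda_f(n)|\le d(n)$ controls the tail, giving $|f(i/r)|\ll_k r^kN^{-k/2}e^{-2\pi r/N}$. Raising to the power $2q$ gives
\begin{align*}
\left(\left(\frac{r^2}{N}\right)^k e^{-4\pi r/N}\right)^q,
\end{align*}
which is the second case of the theorem. No averaging over $f$ is needed here.

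\textbf{Step 3 (the regime $1\le r\le N$).} Now $y\le 1$ and the sum has effective length about $1/y=N/r$. Write $S_f(t)=\sum_{n\le t}\lambda_f(n)$ and apply partial summation with the weight $w(t)=t^{\frac{k-1}{2}}e^{-2\pi t y}$:
\begin{align*}
\sum_n \lambda_f(n) w(n)=-\int_1^\infty S_f(t)w'(t)\,dt .
\end{align*}
Here $|w'(t)|\ll_k t^{\frac{k-3}{2}}(1+ty)e^{-2\pi ty}$. Since $2q\le 2$ we cannot pull the power inside the integral directly. Instead, split $t$ into dyadic blocks $[2^j,2^{j+1}]$ and use the subadditivity $|\sum a_j|^{2q}\le\sum|a_j|^{2q}$, valid for $q\le 1/2$. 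For $1/2<q\le1$, first apply Hölder with weights decaying geometrically in $|j-\log_2(1/y)|$. Within each block, apply Jensen/Hölder to the normalized integral, then take the harmonic average and bound $\mathbb{E}^h|S_f(t)|^{2q}$ by Theorem \ref{thm:det}.

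For $t\le N_k$, the block near $t\asymp N/r$ contributes
\begin{align*}
\left(\frac{r^k}{N^{k/2}}\right)^{2q}\left(\frac{N}{r}\right)^{(k-1)q}\left(\frac{N/r}{1+(1-q)\sqrt{\log\log(10L)}}\right)^q
=\left(\frac{r^k}{1+(1-q)\sqrt{\log\log 10L}}\right)^q .
\end{align*}
In this block $L=\min\{N/r,\,r\}$, up to constants and up to the $\log N$ loss when $k=2$, so $L$ matches $R$. The other blocks decay geometrically: large $t$ is killed by the factor $e^{-2\pi ty}$, and small $t$ by the growing power of $t$.

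\textbf{Main obstacle.} The hardest part is handling $t>N_k$ and the case $k=2$ near the edge $x\approx N$, where Theorem \ref{thm:det} does not apply. For these $t$ I would bound $S_f(t)$ using the Petersson formula, i.e. the second moment, and then Hölder. This costs nothing essential because those blocks are exponentially damped once $t\gg N/r\cdot\log N$.

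The remaining danger is $r$ close to $1$, where $N/r\approx N$ and the dominant block sits right at the boundary $t\approx N_k$. There $R\ll 1$, so the claimed bound is only $r^{kq}$, and the crude square-root bound from the Petersson formula suffices.

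A similar boundary issue arises for $r$ near $N$, where the effective length $N/r$ is small. There the target bound is again trivial, since $\log\log 10R\asymp1$, and it follows from the Deligne bound.
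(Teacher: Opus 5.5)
Your proposal has two genuine gaps. Both come from working on the Fricke side, i.e. with the expansion of $f(ir/N)$, for every $r$.

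\textbf{Oldforms.} Step 1 needs $f|_kW_N=\eta_f f$ for every $f\in\mathcal{H}_k(N)$. This holds only when there are no oldforms, i.e. when $\mathcal{S}_k(\mathrm{SL}_2(\mathbb{Z}))=0$, which means $k\in\{2,4,6,8,10,14\}$. The theorem, however, is stated for every even $k$.

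For $k=12$, say, $\Delta(z)$ is a normalized Hecke eigenform in $\mathcal{S}_{12}(\Gamma_0(N))$ with $\Delta|_{12}W_N=N^{6}\Delta(Nz)$, so it is not a $W_N$-eigenvector. An orthogonal eigenbasis may contain $\Delta(z)$ together with a normalized form $\Delta(z)+a\Delta(Nz)$, where $|a|\gg N^{13/2}$ is forced by orthogonality. For $r>N$ this second form satisfies $|f(i/r)|\gg \sqrt N\,(r^2/N)^{6}e^{-2\pi r/N}$. So the pointwise bound in your Step 2 is false, and ``no averaging over $f$ is needed'' is wrong; only the small harmonic weight of this form saves the averaged estimate.

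The paper uses only the identity $|f(i/r)|=(r^2/N)^{k/2}|(f|_kW_N)(ir/N)|$, which holds for every $f$. It combines this with unitarity of $W_N$, so that $\{f|_kW_N\}$ is again an orthogonal basis carrying the same harmonic weights. It then bounds the second moment of $g(ir/N)$ over that basis via Lemmas \ref{lem:largesieve} and \ref{lem:largesieveineq2}, and finishes with Jensen. For $1\le r\le N_k$ it never leaves the cusp $\infty$, so no newform property is needed there either.

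\textbf{Weight $2$ with moderately small $r$.} For $k=2$ your Step 3 fails for $r$ far beyond bounded size. On the Fricke side the sum has effective length $N/r$, which exceeds $N_2=N/\log N$ whenever $r<\log N$. The dominant block $t\asymp N/r$ then lies outside the range of Theorem \ref{thm:det}, and your second-moment fallback gives only $\ll r^{2q}$.

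In that range $R=r$. For instance, at $r=\sqrt{\log N}$ and fixed $q<1$, the theorem asserts an extra saving $(1+(1-q)\sqrt{\log\log(10r)})^{q}$ that is unbounded as $N\to\infty$. So your remark that the only danger is $R\ll 1$ is incorrect.

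The problem persists just above $\log N$. At $t\asymp N/r$ one gets $L_2=\min\{N/r,\,rN_2/N\}=r/\log N$. For example, $r=\log N\cdot\log\log N$ gives $L_2=\log\log N$, hence only $\log\log(10L_2)\asymp\log\log\log\log N$ in place of $\log\log(10R)\asymp\log\log\log N$. So the ``$\log N$ loss'' is not harmless.

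The paper avoids this in two steps:
\begin{itemize}
\item For $r\le\sqrt N$ it expands at $\infty$, writing $f(i/r)=\sum_n\lambda_f(n)n^{\frac{k-1}{2}}e^{-2\pi n/r}$. The effective length $r\le\sqrt N\le N_2$ keeps Theorem \ref{thm:det} available, with $\log\log(10L_2)\asymp\log\log(10r)$ on the dominant range.
\item It uses the Fricke symmetry, which is legitimate for $k=2$, only for $\sqrt N<r\le N$, where the length $N/r$ is below $\sqrt N$.
\end{itemize}
The same direct expansion at $\infty$ also covers all $1\le r\le N$ when $k\ge 4$, without any newform assumption. With these changes, your partial summation and dyadic H\"older scheme go through essentially unchanged.
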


The results in \cite{MR4061962} suggest that the bound in Theorem \ref{thm:random} reflects the true order of magnitude of the low moments of $\sum_{n\leq x}\mathbb{X}(n)$, and we similarly expect a matching lower bound in Theorem \ref{thm:det} to hold when $x\leq \sqrt{N}$. As for Theorem \ref{thm:modform}, recall that 
when $k \in \{2,4,6,8,10\}$, every Hecke eigenform of weight $k$ is primitive. It follows that if $f \in \mathcal{H}_k(N),$ then $f |_k W_N =\epsilon_f f$ for some $|\epsilon_f|=1$ (see (\ref{eq:fricke}) for the definition). In particular, for any $r>0,$ we have the symmetry
\begin{align*}
\left|f \left( \frac{i}{r} \right) \right| =  \left(\frac{r^2}{N} \right)^{k/2} 
\left|f\left(  \frac{ir}{N} \right) \right|,
\end{align*}
which suggests that a corresponding lower bound should also hold in Theorem \ref{thm:modform}.

\bigskip

\noindent\textit{Notation.} 
Throughout the paper, we use standard asymptotic notation, writing $f(T)=o(g(T))$ to mean that $|f(T)/g(T)|$ tends to $0$ as ${T\to\infty}$, and $f(T)=O(g(T))$ or $f(T)\ll g(T)$ to mean that $\limsup|f(T)/g(T)|$ is finite. The
implied constants depend only on the subscripted parameters unless
otherwise specified. We denote $e(x):=e^{2\pi i x}$ and $\exp_2(x):=\exp(e^x)$ for $x\in\mathbb R.$
For $n \in \mathbb{N}$, let $P^+(n)$ denote the largest prime factor of
$n$, with the convention $P^+(1)=1$. We say that $n$ is $y$-smooth if
$P^+(n)\leq y$, and write $\Psi(x,y):=\#\{n\leq x:P^+(n)\leq y\}.$ If $\mathcal P$ is a set of primes, we write
$\omega_{\mathcal P}(n):=\#\{p\in\mathcal P:p\mid n\}.$\\

\section*{Acknowledgements}
The authors would like to thank Adam Harper and James Maynard for helpful discussions, as well as Max Wenqiang Xu and Junren Zheng for generously sharing their results. S.-K. L. is supported by the
Croucher Fellowship for Postdoctoral Research.
M.D.W. is supported by a start-up fund from the Faculty of Science and the URC Seed Fund for Basic Research at The University of Hong Kong, and by a start-up allowance from the Croucher Foundation. J.H.\ is supported by the Engineering and Physical Sciences Research Council (EPSRC) Grant EP/Z535990/1.

\section{Preliminaries}

This section collates several technical lemmas used throughout the paper. We begin with the following partition of unity, which is used in Section \ref{sec:derandom} to (roughly) fix the value of truncated Euler product integrals.
\begin{lemma}[Partition of unity] \label{lem:partition}
Let $J \in \mathbb{N}$ be large, and $\delta > 0$ be small. Then there exist functions $g : \R \rightarrow \R$ (depending on $\delta$) and $g_{J+1} : \R \rightarrow \R$ (depending on $\delta$ and $J$) such that, if we define $g_{j}(x) := g(x - j)$ for $|j| \leq J$, we have the following properties:
\begin{itemize}

\item $\sum_{|j| \leq J} g_{j}(x) + g_{J+1}(x) = 1$ for $x \in \R;$

\item $g(x) \geq 0$ for $x \in \R$, and $g(x) \leq \delta$ whenever $|x| > 1;$

\item $g_{J+1}(x) \geq 0$ for $x \in \R$, and $g_{J+1}(x) \leq \delta$ whenever $|x| \leq J;$

\item we have the derivative estimate $|\frac{d^{l}}{dx^{l}} g(x)| \leq \frac{1}{\pi (l+1)} (\frac{2\pi}{\delta})^{l+1}$ for $l \in \mathbb{N}$ and $x \in \R.$
\end{itemize}

\begin{proof}
See \cite[Approximation result 1]{harper2023typicalsizecharacterzeta}.
\end{proof}

\end{lemma}

\noindent Sato--Tate RMFs, as Steinhaus RMFs, satisfy an orthogonality relation.

\begin{lemma}[Orthogonality relation] \label{lem:orthog}
Let $m,n \geq 1$ be integers. Then
\begin{align*}
\mathbb{E} [\mathbb{X}(m) \mathbb{X}(n)]=\delta_{mn}.
\end{align*}

\begin{proof}
By the multiplicativity and independence of $\mathbb{X}$ (see Definition \ref{def:sato-tate}), it suffices to show that for each prime $p \geq 2$ and any integers $u,v \geq 0,$ we have
\begin{align*}
\mathbb{E}[\mathbb{X}(p^{u}) \mathbb{X}(p^{v})] = \delta_{uv}.
\end{align*}
This can be verified by elementary algebra and calculus, and the lemma follows.
\end{proof}

\end{lemma}

\noindent Whilst Sato--Tate RMFs are not completely multiplicative, they satisfy the Hecke recursion, as Hecke eigenvalues do.

\begin{lemma}[Hecke recursion] \label{lem:recursion}
Let $\mathbb{X}$ be a Sato--Tate random multiplicative function. Then for any integers $m,n \geq 1,$ we have
\begin{align}
\mathbb{X}(m) \mathbb{X}(n) 
= \sum_{\substack{d \mid (m,n)}} \mathbb{X} \left( \frac{mn}{d^2}\right). \label{eq:randomhecke}
\end{align}
Given an even integer $k \geq 2$ and a prime $N \geq 2,$ let $f\in\mathcal H_k(N).$ Then for any integers $1 \leq m,n<N,$ we have
\begin{align}
\lambda_f(m) \lambda_f(n)
= \sum_{\substack{d \mid (m,n)}} \lambda_f \!\left( \frac{mn}{d^2}\right).
\label{eq:dethecke}
\end{align}

\begin{proof}
To establish (\ref{eq:randomhecke}), by the multiplicativity of $\mathbb{X}$ (see Definition \ref{def:sato-tate}), it suffices to show that for each prime $p \geq 2$ and any integers $u,v \geq 0,$ we have
\begin{align*}
\mathbb{X}(p^{u}) \mathbb{X}(p^{v}) = \sum_{j=0}^{\min( u,v)} \mathbb{X}(p^{u+v-2j}),
\end{align*}
which can be verified using elementary algebra and calculus.

To establish (\ref{eq:dethecke}), we recall the Hecke recursion 
\begin{align}
\lambda_f(r) \lambda_f(s)
= \sum_{\substack{d \mid (r,s)}} \lambda_f \!
\left( \frac{rs}{d^2}\right) \label{eq:coprimehecke}
\end{align}
for integers $r,s \geq 1$ satisfying $(rs,N) = 1$ (see, e.g., \cite[Section 14.7]{MR2061214}).\footnote{One must be wary of the normalization. Also, due to the additional coprimality condition, the assumption that the Hecke eigenforms are primitive is unnecessary for \eqref{eq:coprimehecke} to hold.} Since by assumption $m,n<N$ and $N$ is prime, the coprimality condition holds and (\ref{eq:coprimehecke}) becomes (\ref{eq:dethecke}). Therefore, the lemma follows.
\end{proof}

\end{lemma}

Unlike Dirichlet characters of prime modulus, Hecke eigenvalues do not satisfy an exact orthogonality relation. Nevertheless, the deviation from exact orthogonality is explicit.

\begin{lemma}[Petersson trace formula] \label{lem:peter}
Let $m,n \geq 1$ be integers. Then
\begin{align*}
\mathbb{\sum}_{f \in \mathcal{B}_k(N)}^h \lambda_f(m) \overline{\lambda_f(n)}=\delta_{mn}+ 2\pi i^{-k}
\sum_{\substack{c \geq 1\\c \equiv 0 \Mod{N}}}
\frac{S(m,n;c)}{c} \cdot J_{k-1}\left( \frac{4\pi \sqrt{mn}}{c} \right).
\end{align*}

\begin{proof}
See, e.g., \cite[Corollary 14.23]{MR2061214}.
\end{proof}

\end{lemma}

\noindent As a consequence of the trace formula, we establish an asymptotic bilinear estimate which may be of independent interest (see \cite[Section 8]{MR2330439} for similar results).

\begin{lemma}[Asymptotic bilinear estimate] \label{lem:largesieve}
Let $k \geq 2$ be an even integer and let $N$ be a sufficiently large positive integer. Then for any $1 \leq x \leq N$ and sequences of complex numbers $\boldsymbol{\alpha}=(\alpha_m)_{m \leq x}, \boldsymbol{\beta}=(\beta_n)_{n \leq x},$ we have
\begin{gather*}
\left|\mathbb{E}^h_{f \in \mathcal{B}_k(N)}  \left( \sum_{m \leq x} {\alpha}_m\lambda_f(m) \right)\overline{\left( \sum_{n \leq x} {\beta}_n\lambda_f(n) \right)} 
-\sum_{m \leq x} {\alpha}_m \overline{{\beta}_m} \right| \\
\ll_{k} 
\begin{cases}
\hfil \dfrac{x\log (2x)}{N}  \|\boldsymbol{\alpha}\|_2 \|\boldsymbol{\beta}\|_2 & \mbox{{\normalfont if $k=2,$ }} \\
\left(\dfrac{x}{N} \right)^{k-1}  \|\boldsymbol{\alpha}\|_2 \|\boldsymbol{\beta}\|_2 
 & \mbox{{\normalfont otherwise, }}
\end{cases}
\end{gather*}
where
\begin{align*}
\|\boldsymbol{\alpha}\|_2:=\left(\sum_{m \leq x} |{\alpha}_m|^2\right)^{1/2}, 
\|\boldsymbol{\beta}\|_2:=\left(\sum_{n \leq x} |{\beta}_n|^2\right)^{1/2}.
\end{align*}
In particular, we have
\begin{align*}
\mathbb{E}^h_{f \in \mathcal{B}_k(N)}  \left| \sum_{m \leq x} {\alpha}_m\lambda_f(m) \right|^2
=\left(1+O_k \left( \frac{x}{N} \cdot  (\log (2x))^{\max\{ 0,3-k \}} \right) \right) \|\boldsymbol{\alpha}\|_2^2.
\end{align*}

\begin{proof}
For $k \geq 4,$ we follow the proof of \cite[Theorem 5.7]{MR1474964} in the case $1 \leq x \leq N.$ Applying Lemma 
\ref{lem:peter}, we have
\begin{gather}
\mathbb{\sum}^h_{f \in \mathcal{B}_k(N)}  \left( \sum_{m \leq x} {\alpha}_m\lambda_f(m) \right)\overline{\left( \sum_{n \leq x} {\beta}_n\lambda_f(n) \right)} \nonumber \\
=\sum_{m \leq x} \sum_{n \leq x} \alpha_m \overline{\beta_n} 
\left( 
\delta_{mn}+ 2\pi i^{-k}
\sum_{\substack{c \geq 1\\c \equiv 0 \Mod{N}}}
\frac{S(m,n;c)}{c} \cdot J_{k-1}\left( \frac{4\pi \sqrt{mn}}{c} \right)
\right) \nonumber \\
=\sum_{m \leq x} \alpha_m \overline{\beta_m} 
+2\pi i^{-k} \sum_{\substack{c \geq 1\\c \equiv 0 \Mod{N}}} \frac{1}{c}
\sum_{m \leq x} \sum_{n \leq x} \alpha_m \overline{\beta_n} S(m,n;c) J_{k-1}\left( \frac{4\pi \sqrt{mn}}{c} \right). \label{eq:afterpeter}
\end{gather}
Expanding the Bessel function in its power series
\begin{align*}
J_{k-1}(z)=\sum_{\ell=0}^{\infty} \frac{(-1)^{\ell}}{\ell! \Gamma(k+\ell)} \left(\frac{z}{2} \right)^{k-1+2\ell},
\end{align*}
we obtain, by absolute convergence, that
\begin{gather}
\sum_{m \leq x} \sum_{n \leq x} \alpha_m \overline{\beta_n} S(m,n;c) J_{k-1}\left( \frac{4\pi \sqrt{mn}}{c} \right) \nonumber \\
=\sum_{\ell=0}^{\infty} \frac{(-1)^{\ell}}{\ell! \Gamma(k+\ell)} \left( \frac{2\pi}{c} \right)^{k-1+2\ell}
\sum_{m \leq x} \sum_{n \leq x} \alpha_m \overline{\beta_n} (mn)^{\nu_{\ell}} S(m,n;c), \label{eq:afterbessel}
\end{gather}
where $\nu_{\ell}:=\frac{k-1}{2}+\ell.$ Opening the Kloosterman sum gives
\begin{align*}
\sum_{m \leq x} \sum_{n \leq x} \alpha_m \overline{\beta_n} (mn)^{\nu_{\ell}} S(m,n;c)
=\sum_{\substack{d \Mod{c}\\(d,c)=1}} 
\left( \sum_{m \leq x} \alpha_m m^{\nu_{\ell}}  e\left( \frac{md}{c} \right) \right)
\left( \sum_{n \leq x} \overline{\beta_n} n^{\nu_{\ell}}  e\left( \frac{n\overline{d}}{c} \right) \right).
\end{align*}
By the Cauchy--Schwarz inequality, this is
\begin{align*}
\leq \left(\sum_{\substack{d \Mod{c}\\(d,c)=1}}  
\left|  \sum_{m \leq x} \alpha_m m^{\nu_{\ell}}  e\left( \frac{md}{c} \right) \right|^2 
\right)^{1/2}
\left(\sum_{\substack{d \Mod{c}\\(d,c)=1}}  
\left|  \sum_{n \leq x} \overline{\beta_n} n^{\nu_{\ell}}  e\left( \frac{n\overline{d}}{c} \right) \right|^2 
\right)^{1/2}.
\end{align*}
Since $x \leq N \leq c,$ it follows from additive orthogonality that 
\begin{align*}
\sum_{\substack{d \Mod{c}\\(d,c)=1}}  
\left|  \sum_{m \leq x} \alpha_m m^{\nu_{\ell}}  e\left( \frac{md}{c} \right) \right|^2 
\leq& 
\sum_{\substack{d \Mod{c}}}  
\left|  \sum_{m \leq x} \alpha_m m^{\nu_{\ell}}  e\left( \frac{md}{c} \right) \right|^2\\
=&c \sum_{m \leq x} |\alpha_m|^2 m^{2\nu_{\ell}} \\
\leq & c x^{k-1+2\ell} \|\boldsymbol{\alpha}\|_2^2,
\end{align*}
and therefore
\begin{align*} 
\left| \sum_{m \leq x} \sum_{n \leq x} \alpha_m \overline{\beta_n} (mn)^{\nu_{\ell}} S(m,n;c) \right|
\leq c  x^{k-1+2\ell} \|\boldsymbol{\alpha}\|_2 \|\boldsymbol{\beta}\|_2.
\end{align*}
Combining with (\ref{eq:afterpeter}) and (\ref{eq:afterbessel}), we have
\begin{gather*}
\left| \mathbb{\sum}^h_{f \in \mathcal{B}_k(N)}  \left( \sum_{m \leq x} {\alpha}_m\lambda_f(m) \right)\overline{\left( \sum_{n \leq x} {\beta}_n\lambda_f(n) \right)} -  \sum_{m \leq x} \alpha_m \overline{\beta_m} 
\right| \\
\leq 2\pi \|\boldsymbol{\alpha}\|_2 \|\boldsymbol{\beta}\|_2 
\sum_{\ell=0}^{\infty} \frac{(2\pi x)^{k-1+2\ell}}{\ell! \Gamma(k+\ell)} \sum_{\substack{c \geq 1\\c \equiv 0 \Mod{N}}} \frac{1}{c^{k-1+2\ell}} \\
= 2\pi \|\boldsymbol{\alpha}\|_2 \|\boldsymbol{\beta}\|_2  
\sum_{\ell=0}^{\infty} \frac{(2\pi)^{k-1+2\ell}\zeta(k-1+2\ell)}{\ell! \Gamma(k+\ell)}
\left(\frac{x}{N} \right)^{k-1+2\ell}.
\end{gather*}
Since $\zeta(k-1+2\ell)<\infty$ for $k \geq 4$ and $\ell \geq 0,$ this is
\begin{align} \label{eq:withoutnormalize}
\ll_k \left(\frac{x}{N} \right)^{k-1}  \|\boldsymbol{\alpha}\|_2 \|\boldsymbol{\beta}\|_2. 
\end{align}
In particular, taking $x=1, \alpha_{1}=1, \beta_1=1$ gives
\begin{align} \label{eq:n=1general}
\mathbb{\sum}_{f \in \mathcal{B}_k(N)}^h 1 = 1+O_k(N^{1-k}),
\end{align}
and the lemma for $k \geq 4$ follows from (\ref{eq:withoutnormalize}).

For $k=2,$ appealing to \cite[Theorem 1]{MR1258904}, we have
\begin{align} \label{eq:dfi2}
\mathbb{\sum}^h_{f \in \mathcal{B}_2(N)}  \left| \sum_{m \leq x} {\alpha}_m\lambda_f(m) \right|^2
=\left(1+O \left(  \frac{x\log (2x)}{N}  \right) \right) \|\boldsymbol{\alpha}\|_2^2.
\end{align}
In particular, taking $x=1, \alpha_{1}=1$ gives
\begin{align} \label{eq:n=1k=2}
\mathbb{\sum}_{f \in \mathcal{B}_2(N)}^h 1 = 1+O(1/N),
\end{align}
and the lemma for $k = 2$ follows from (\ref{eq:dfi2}) by standard polarization.
\end{proof}

\end{lemma}

We also extend the asymptotic bilinear estimate beyond the level $N$ when $\boldsymbol{\alpha}=\boldsymbol{\beta}$ is dyadically supported (see \cite[Theorem 1]{MR1258904} without the dyadic support assumption, but with an additional logarithmic loss in weight $2$).

\begin{lemma}[Large sieve inequality] \label{lem:largesieveineq2}
Let $k \geq 2$ be an even integer and $N \geq 1$ be an integer. Then for any real number $x \geq 1$ and a sequence of complex numbers $\boldsymbol{\alpha}=(\alpha_m)_{x<m \leq 2x},$ we have 
\begin{align*}
\mathbb{E}^h_{f \in \mathcal{B}_k(N)}  \left| \sum_{x<m \leq 2x} {\alpha}_m\lambda_f(m) \right|^2
\ll_k \left(1+\frac{x}{N} \right)\|\boldsymbol{\alpha}\|_2^2,
\end{align*}
where
\begin{align*}
\|\boldsymbol{\alpha}\|_2:=
\left(\sum_{x<m \leq 2x} |{\alpha}_m|^2\right)^{1/2}.
\end{align*}

\begin{proof}
For $k \geq 4,$ this is an immediate consequence of \cite[Theorem 5.7]{MR1474964} with (\ref{eq:n=1general}). For $k=2,$ we first show that if $F \in \mathcal{S}_2(\Gamma_0(N)),$ then
\begin{align} \label{eq:dyadicpet}
\sum_{M < m \leq 2M} |\lambda_F(m)|^2 \ll \left( 1+\frac{M}{N} \right) \|F\|_{\rm Pet}^2
\end{align}
for any $M \geq 1$, where 
\begin{align*}
\|F\|_{\rm Pet}:=\left( \int_{\Gamma_0(N) \backslash \mathbb{H}} |F(x+iy)|^2 dxdy \right)^{1/2}
\end{align*}
is the Petersson norm of $F.$

Let $a_F(n):=\lambda_F(n)\sqrt{n}$ for integers $n \geq 1.$ By Parseval's identity in the $x$-variable, one can show that for any $y>0,$ we have
\begin{align*}
\int_0^1 |F(x+iy)|^2 dx = \sum_{m \geq 1} |a_F(m)|^2 e^{-4 \pi m y },
\end{align*}
so that
\begin{align*}
\sum_{M < m \leq 2M} |\lambda_F(m)|^2 \ll  \int_{M^{-1}}^{\infty} \int_0^1 |F(x+iy)|^2 dx dy.
\end{align*}
Arguing analogously to \cite[p. 103]{MR1317646}, this is
\begin{gather*}
\leq \max_{z \in P_M} \# \{ \gamma \in \Gamma_0(N) /\{\pm1\} \,:\, \gamma z \in P_M  \} \cdot \| F\|_{\rm Pet}^2 \\
\leq \left( 1+\frac{3M}{N} \right)\|F\|_{\rm Pet}^2,
\end{gather*}
where $P_M:=\{ z \in \mathbb{H}: 0<\Re(z) \leq 1, \Im(z) > M^{-1} \},$ and the bound (\ref{eq:dyadicpet}) is verified.

Returning to the proof of the large sieve inequality, opening the square and  rearranging gives
\begin{align*}
\mathbb{\sum}^h_{f \in \mathcal{B}_2(N)}  \left| \sum_{x<m \leq 2x} {\alpha}_m\lambda_f(m) \right|^2
=\sum_{x<m \leq 2x} \alpha_m \left(\mathbb{\sum}^h_{f \in \mathcal{B}_2(N)}
\lambda_f(m) \sum_{x<n \leq 2x} \overline{\alpha_n \lambda_f(n)}\right).
\end{align*}
By the Cauchy--Schwarz inequality, this is
\begin{align*}
\leq \|\boldsymbol{\alpha}\|_2 \left( 
\sum_{x<m \leq 2x} 
\left|\mathbb{\sum}^h_{f \in \mathcal{B}_2(N)}
\lambda_f(m) \sum_{x<n \leq 2x} \overline{\alpha_n \lambda_f(n)}\right|^2
\right)^{1/2}.
\end{align*}
Set
\begin{align*}
A_f:=\sum_{x<n \leq 2x} \overline{\alpha_n \lambda_f(n)}
\end{align*}
and
\begin{align*}
F:=\frac{1}{4\pi}\sum_{f \in \mathcal{B}_2(N)}\frac{A_f}{\|f\|_{\rm Pet}^2}f.
\end{align*}
Then $\lambda_F(m)=\mathbb{\sum}_{f \in \mathcal{B}_2(N)}^h\lambda_f(m)A_f.$ Applying (\ref{eq:dyadicpet}), we obtain
\begin{align} \label{eq:beforeorthog}
\mathbb{\sum}^h_{f \in \mathcal{B}_2(N)}  \left| \sum_{x<m \leq 2x} {\alpha}_m\lambda_f(m) \right|^2
\ll \left( 1+\frac{x}{N}\right)^{1/2} \|\boldsymbol{\alpha}\|_2  \| F \|_{\rm Pet}.
\end{align}
By the orthogonality of $ \mathcal{B}_2(N),$ we have
\begin{align*}
\| F \|_{\rm Pet}^2
&=\frac{1}{16\pi^2}\sum_{f \in \mathcal{B}_2(N)}\frac{|A_f|^2}{\|f\|_{\rm Pet}^2}\\
&=\frac{1}{4\pi}\mathbb{\sum}^h_{f \in \mathcal{B}_2(N)}|A_f|^2.
\end{align*}
Combining with (\ref{eq:beforeorthog}), we obtain
\begin{align*}
\mathbb{\sum}^h_{f \in \mathcal{B}_2(N)}  \left| \sum_{x<m \leq 2x} {\alpha}_m\lambda_f(m) \right|^2
\ll \left( 1+\frac{x}{N} \right) \|\boldsymbol{\alpha}\|_2^2.
\end{align*}
Finally, combining with (\ref{eq:n=1k=2}), the lemma for $k=2$ follows.
\end{proof}

\end{lemma}

\section{Proof of Theorem \ref{thm:random}}

To prove Theorem \ref{thm:random}, we first reduce the problem to that of bounding low moments of an integral involving a truncated random Euler product. For $n\geq 2$, let $P^+(n)$ denote the largest prime factor of $n$, and set $P^+(1):=1$. Given a real number $P \geq 2,$ we define the random Dirichlet series
\begin{align*} 
F_{P}^{\mathrm{rand}}(s):=\sum_{\substack{n \geq 1\\ P^+(n) \leq P}} \frac{\mathbb{X}(n)}{n^s}
\end{align*}
for $\Re(s)>0.$ It can be expressed as a truncated random Euler product of degree $2$ as follows.

\begin{lemma} \label{lem:euler}
Let $P \geq 2.$ Then for $\Re(s)>0,$ we have
\begin{align*}
F_{P}^{\mathrm{rand}}(s)=\prod_{p \leq P} \left( 1-\frac{e^{i\theta_p}}{p^{s}}\right)^{-1} \left( 1-\frac{e^{-i\theta_p}}{p^{s}} \right)^{-1}.
\end{align*}

\begin{proof}
This is an immediate consequence of Definition \ref{def:sato-tate}.
\end{proof}

\end{lemma}

\begin{lemma}\label{lem:random-reduce-to-mass}
Uniformly for $y \in [2, \sqrt{x}]$ and $q \in [0,1]$,
there exist absolute constants $C \geq 0$ and $c > 0$ such that
\begin{gather}
\mathbb{E}\left[ \left|\frac{1}{\sqrt{x}}\sum_{n \leq x}\mathbb{X}(n) \right|^{2q}\right] \ll \mathbb{E}\left[\left(\frac{1}{\log y} \int_{\mathbb{R}} \left| \frac{F_{y}^{\mathrm{rand}}(\frac{1}{2} + it)}{\frac{1}{2} + it}\right|^2 dt\right)^q \right] + \left[(\log y)^C e^{-c \frac{\log x}{ \log y}}\right]^q.
\end{gather}

\begin{proof}
One observes that the argument used in the proof of \cite[Lemma 1.2]{GW2025} carries over verbatim to the Sato--Tate random multiplicative functions, as it relies solely on the orthogonality relation in Lemma \ref{lem:orthog}, with uniformity over all $q \in [0,1]$. The only difference in the present setting is that the Dirichlet series restricted to $y$-smooth numbers is given by the truncated Euler product in Lemma \ref{lem:euler}.
\end{proof}

\end{lemma}

It therefore remains to establish the following analogue of \cite[Lemma 1.3]{GW2025} for Sato--Tate random multiplicative functions, whose proof is given across the next three sections.

\begin{proposition} \label{prop:random-mass-bound}
Uniformly in $y \geq 3$ and $q \in [0,1]$, we have
\begin{align*}
\mathbb{E}\left[\left(\frac{1}{\log y} \int_{\mathbb{R}} \bigg| \frac{F_{y}^{\mathrm{rand}}(\frac{1}{2} + it)}{\frac{1}{2} + it}\bigg|^2 dt\right)^q \right] 
\ll \big( 1+(1-q)\sqrt{\log \log y} \big)^{-q}.
\end{align*}

\end{proposition}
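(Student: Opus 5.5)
We follow Harper's multiplicative chaos argument \cite{MR4061962}, as streamlined in \cite{GW2025}, adapting each probabilistic input to the degree-two Euler product of Lemma \ref{lem:euler}. First we reduce to the range $q\in[2/3,1]$: for $q\le 2/3$ the bound follows by H\"older's inequality from the case $q=2/3$, since $(1-q)$ is then bounded below. Next we split $\mathbb{R}$ into unit intervals $|t-T|\le 1/2$. Using the weight $|1/2+it|^{-2}\asymp (1+T^2)^{-1}$ and subadditivity of $u\mapsto u^q$, it suffices to bound
\begin{align*}
\mathbb{E}\Big[\Big(\frac{1}{\log y}\int_{-1/2}^{1/2}|F_y^{\mathrm{rand}}(\tfrac12+i(T+t))|^2dt\Big)^q\Big]
\end{align*}
uniformly in $T$, and then sum against $(1+T^2)^{-q}$. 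For $q\ge 2/3$ this sum converges. The key structural input is that
\begin{align*}
\log|F_y^{\mathrm{rand}}(\tfrac12+it)|^2=\sum_{p\le y}\frac{4\cos\theta_p\cos(t\log p)}{\sqrt p}+(\text{small terms from } p^{-1}),
\end{align*}
and that $\mathbb{E}[(2\cos\theta_p)^2]=1$ under $\mu_{\mathrm{ST}}$. Hence the Gaussian-like process $t\mapsto \sum_p 2\cos\theta_p\cdot 2\cos(t\log p)/\sqrt p$ has covariance $\approx 2\sum_p \cos(t\log p)\cos(s\log p)/p$. After adding the shifts this gives $\approx \log\min(\log y, |t-s|^{-1})$, exactly the log-correlated structure of the Steinhaus case. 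The $T$-dependence enters through $\cos(t\log p)$ products, and the resulting $\cos(2T\log p)$ oscillation is harmless by the prime number theorem.

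We then run Harper's conditioning argument. We introduce a "good event" $\mathcal{G}$ on which the partial sums $\sum_{p\le y^{e^{-j}}}$ of the log-Euler product at every $t\in[-1/2,1/2]$ stay below a barrier of the form $j+C\log j$ (in $\log\log$ scale), with a small slack. On $\mathcal{G}^c$ we bound the $q$-th moment by H\"older's inequality and first moments, computed exactly via Lemma \ref{lem:orthog} (Parseval gives $\mathbb{E}\int|F_y|^2\asymp \log y$). Combined with a union bound this contributes a decaying amount. On $\mathcal{G}$ we use $\mathbb{E}[Z^q\mathbf 1_{\mathcal G}]\le (\mathbb{E}[Z\mathbf 1_{\mathcal G}])^q$. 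This reduces matters to a first moment of $|F_y|^2$ restricted to the barrier event, which becomes a random walk ballot estimate and yields the factor $(\log\log y)^{-1/2}$. The $(1-q)$ dependence comes, as in \cite{GW2025}, from optimizing the barrier height/slack in terms of $1-q$.

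The main obstacle is the tilted-measure computation. We need two-sided estimates for
\begin{align*}
\mathbb{E}\big[|1-e^{i\theta}p^{-s}|^{-2}|1-e^{-i\theta}p^{-s}|^{-2}\big],
\end{align*}
and for the distribution of $\cos\theta_p$ under the tilt by this factor, uniformly for $p$ small relative to $|s|$. These replace the explicit Steinhaus formulas. We must also verify a Gaussian/ballot approximation (Berry--Esseen for sums over primes in the blocks $[y^{e^{-j-1}},y^{e^{-j}}]$) for the non-rotation-invariant Sato--Tate variables. I would compute the tilt via the expansion $\sum_v \mathbb{X}(p^v)p^{-vs}$ and Lemma \ref{lem:orthog}. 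This shows the tilted mean of $\cos(t\log p)\,2\cos\theta_p$ is $\approx 2\cos^2(t\log p)/\sqrt p+O(1/p)$ and that the tilted variance matches the untilted one up to $O(p^{-1/2})$. That is enough to carry Harper's ballot-theorem input through. Small primes $p\le P_0$ are handled separately by a crude bounded-factor argument.
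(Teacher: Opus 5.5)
You have the right skeleton: reduce to $q\in[2/3,1]$, use barrier (good/bad) events with H\"older on the bad event, apply a ballot estimate to the first moment on the good event, and get the $(1-q)$-dependence from barrier heights $A_m\asymp m/(1-q)$. This is what the paper does in Lemma~\ref{lem:ST-euler-mom-localised}, and using a tilt instead of the paper's Fubini-plus-ballot bound for $S_N^{(a)}(u)$ would be a harmless variation. The genuine gap is your ``key structural input'': the Sato--Tate field is \emph{not} approximately stationary, and the $p^{-1}$ terms are not small.

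Since $\mathbb{E}[\cos 2\theta_p]=-\tfrac12$, the second-order terms give $\log|F_y^{\mathrm{rand}}(\tfrac12+it)|^2$ a mean $-\sum_{p\le y}\cos(2t\log p)/p\approx-\log\min\{\log y,1/|t|\}$. The variance of the main term is $\sum_p 4\cos^2(t\log p)/p$, which is doubled at every level with $\log p\lesssim 1/|t|$. Equivalently, your covariance $2\sum_p\cos(t\log p)\cos(s\log p)/p$ contains the extra term $\log\min\{\log y,|t+s|^{-1}\}$. On your central interval $|T|\le 1/2$ there is no shift to make $\cos(2T\log p)$ oscillate, so this term is of full size. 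There the increments have $t$-dependent drift and variance, and the Steinhaus-type barrier/ballot input you plan to import fails. For large $|T|$, the prime number theorem (Lemma~\ref{lem:prime-sum}) only gives $\sum_{p\in\mathcal P_k(a)}\cos(2t\log p)/p\ll e^{-k}$ once $\log p\gg(\log(3+|t|))^2$. So a crude bounded-factor argument for $p\le P_0$ with $P_0$ fixed does not cover the remaining small primes, and uniformity in $T$ does not come out of your sketch. The real obstacle is this lack of translation invariance in $t$, not the single-$t$ tilt computation.

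The missing idea in the paper is to keep exactly the primes for which this cancellation holds (the admissibility condition \eqref{eq:admissible}) and to strip off the rest by conditional Jensen. This uses independence and the $t$-uniform identity $\mathbb{E}|F_{(z,w]}^{\mathrm{rand}}(\tfrac12+it)|^2=\prod_{z<p\le w}(1-1/p)^{-1}$ from Lemma~\ref{lem:ST-euler-product-2ndmom}. The paper proceeds as follows.
\begin{itemize}
\item For $|t|\le e^{-(T/2-a_*-1)}$ it uses Jensen directly.
\item It splits the rest of $(0,1]$ into shells $[e^{-r-1},e^{-r}]$. On each shell it removes primes with $\log p\le e^{r+a_*}$, at a cost $e^{r+a_*}$ offset by the shell length. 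After this, $u=e^{a_r}t\asymp e^{a_*}$ is bounded away from $0$ and $(a_r,N_r,J)$ is admissible.
\item On $[n,n+1]$ it removes primes with $\log p\le e^{a_n}$, where $a_n\approx a_*+2\log\log(10+n)$. This costs $(\log(10+n))^{2(1-q)}$, which is summable against $n^{-2q}$ because $q\ge 2/3$. The trivial bound is used once $a_n>T/2-2$.
\end{itemize}
Without some such device, your argument controls neither the interval around the origin nor the small primes at large height, and these are precisely where the Sato--Tate case departs from the Steinhaus one.
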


\noindent Indeed, Theorem \ref{thm:random} follows upon combining Lemma \ref{lem:random-reduce-to-mass} 
with Proposition \ref{prop:random-mass-bound}, taking $\log y := \log x / (\log \log x)^2$.

\section{Estimates for truncated Euler products}

The proof of Proposition \ref{prop:random-mass-bound} requires several estimates for moments of truncated random Euler products.

\begin{lemma} \label{lem:prime-sum}
Let $t \in \mathbb{R}.$ Then for any sufficiently large $P,$ we have
\begin{align}
\label{eq:longprimesum}
\sum_{p \leq P} \frac{1}{p^{1+it}} = \log \left( 1+\min \left\{ \log P, \frac{1}{|t|}\right\} \right) + O(\log \log (3+|t|)).
\end{align}
At $t=0$, the minimum on the right-hand side is interpreted as $\log P$.
Moreover, there exists some $c > 0$ such that the estimates
\begin{gather}
\label{eq:psum-needed-random1}
\sum_{X \le \log p \le eX} \frac{1}{p^{1+i\tau}} 
= \int_X^{eX} \frac{e^{-i\tau v}}{v}dv + O\left((1+|\tau|) X e^{-c\sqrt{X}}\right), \\
\label{eq:psum-needed-random2}
 \qquad \left|\sum_{X \le \log p \le eX} \frac{1}{p^{1+i\tau}}\right| 
 \ll \min\left(1, \frac{1}{|\tau|X}\right) + (1+|\tau|) X e^{-c\sqrt{X}}
\end{gather}

\noindent hold uniformly for all $X \ge 3$ and $\tau \in \mathbb{R}$.

\begin{proof}
We first show that
\begin{align}
\sum_{p \leq P} \frac{1}{p^{1+it}}=\int_{2}^P \frac{1}{x^{1+it}} \cdot\frac{dx}{\log x} + O(\log \log (3+|t|)). \label{eq:stot}
\end{align}
Recall de la Vall\'ee Poussin's prime number theorem, which states that $\pi(x)=\operatorname{Li}(x)+E(x)$ for $x \geq 2,$ where $E(x) \ll x \exp(-c\sqrt{\log x})$ for some absolute constant $c>0.$ Let $T:=3+|t|$ and $Y:=\min\{ P, \exp(10c^{-2}(\log T)^2) \}.$ Then by partial summation, we have
\begin{gather}
\sum_{Y<p \leq P} \frac{1}{p^{1+it}} = \int_{Y}^P \frac{1}{x^{1+it}} \cdot\frac{dx}{\log x} + \frac{E(P)}{P^{1+it}} - \frac{E(Y)}{Y^{1+it}}+(1+it) \int_Y^P \frac{E(x)}{x^{2+it}} dx \nonumber\\
=\int_{Y}^P \frac{1}{x^{1+it}} \cdot\frac{dx}{\log x} +
O((1+|t|)(1+\sqrt{\log Y})\exp(-c\sqrt{\log Y})). \label{eq:yp}
\end{gather}
On the other hand, Mertens' estimate gives
\begin{align}
\left|\sum_{p \leq Y} \frac{1}{p^{1+it}} - \int_{2}^Y \frac{1}{x^{1+it}} \cdot\frac{dx}{\log x} \right| 
\leq& \sum_{p \leq Y} \frac{1}{p} + \int_2^Y \frac{dx}{x\log x} \nonumber\\
\ll& \log \log Y. \label{eq:2y}
\end{align}
Combining (\ref{eq:yp}) and (\ref{eq:2y}), the estimate (\ref{eq:stot}) follows with our choice of $Y$. It remains to show 
\begin{align}
\int_{2}^P \frac{1}{x^{1+it}} \cdot\frac{dx}{\log x} = 
\log \left( 1+\min \left\{ \log P, \frac{1}{|t|}\right\} \right)
+O(1). 
\label{eq:logintegral}
\end{align}
Making the change of variables $u=\log x,$ the integral on the left-hand side becomes
\begin{align*}
\int_{\log 2}^{\log P} e^{-itu} \cdot\frac{du}{u}.
\end{align*}
Suppose $|t| \leq 1/\log P.$ Then $e^{-itu}=1+O(|t|u),$ so that
\begin{align}
\int_{\log 2}^{\log P} e^{-itu} \cdot\frac{du}{u}= \log \log P +O(1).
\label{eq:tsmall}
\end{align}
Otherwise, suppose $|t| > 1/\log P.$ Then similarly, we have
\begin{align} \label{eq:tbigg}
\int_{\log 2}^{\log P} e^{-itu} \cdot\frac{du}{u}=&
\left\{\int_{\log 2}^{\max\{ \log 2, |t|^{-1} \}}+ \int_{\max\{ \log 2, |t|^{-1} \}}^{\log P} \right\}
e^{-itu} \cdot \frac{du}{u} \nonumber \\
=& \log (\max \{\log 2, |t|^{-1}\})+O(1).
\end{align}
Combining (\ref{eq:tsmall}) and (\ref{eq:tbigg}), the estimate (\ref{eq:logintegral}) follows.
The estimates \eqref{eq:psum-needed-random1} \eqref{eq:psum-needed-random2} follow from a similar analysis based on \eqref{eq:yp}, and the proof of the lemma is complete.
\end{proof}

\end{lemma}

\begin{lemma} \label{lem:beta}
Let $\beta \geq 1$ and $t \in \mathbb{R}.$ Then for any sufficiently large $P$, we have
\begin{align} 
\mathbb{E} \left|F^{\mathrm{rand}}_{P}\left(\frac{1}{2}+it\right)\right|^{2\beta}
\asymp_{\beta} (\log P)^{\beta^2} \exp\left(\beta(\beta-1)\sum_{p \le P} \frac{\cos(2t \log p) }{p}  \right). \label{eq:betafirst}
\end{align}
Moreover, we have
\begin{gather}
\mathbb{E} \left|F^{\mathrm{rand}}_{P}\left(\frac{1}{2}+it\right)\right|^{2\beta}
\nonumber \\
\ll_{\beta} (\log P)^{\beta^2} 
\left( 
1+\min \left\{  \log P, \frac{1}{|t|} \right\}
\right)^{\beta(\beta-1)} (\log (3+|t|))^{O(\beta(\beta-1))}. \label{eq:betasecond}
\end{gather}

\begin{proof}
By the Taylor expansion, we have
\begin{align} \label{eq:taylor-logep}
    \log  \left|F^{\mathrm{rand}}_{P}\left(\frac{1}{2}+it\right)\right|
    & = \mathrm{Re}\sum_{p \le P}\sum_{k \ge 1} \frac{1}{k} 
    \left( \left(\frac{e^{i\theta_p}}{p^{\frac{1}{2} + it}}\right)^k + \left(\frac{e^{-i\theta_p}}{p^{\frac{1}{2} + it}}\right)^k \right) \nonumber\\
    & = \sum_{p \le P} \sum_{k\ge 1} \frac{2\cos(k\theta_p) \cos(kt \log p)}{kp^{k/2}}. 
\end{align}

It is straightforward to check that
\begin{align} \label{eq:ST-moments}
    \mathbb{E}[\cos(\theta_p)] = 0,
    \quad  \mathbb{E}[\cos^2(\theta_p)] = \frac{1}{4}
    \quad \text{and} \quad \mathbb{E}[\cos(2\theta_p)] = \mathbb{E}[2\cos^2(\theta_p) - 1] = -\frac{1}{2}.
\end{align}

For each prime $p$, let
\begin{align*}
L_p(t):=\mathbb{E}\exp\left(4\beta\sum_{k\geq 1}
\frac{\cos(k\theta_p)\cos(kt\log p)}{kp^{k/2}}\right).
\end{align*}
By independence, we have
\begin{align*}
\mathbb{E} \left|F^{\mathrm{rand}}_{P}\left(\frac{1}{2}+it\right)\right|^{2\beta}
=\prod_{p\leq P}L_p(t).
\end{align*}
For all sufficiently large primes $p$, Taylor's theorem and the preceding identities give, uniformly in $t$, that
\begin{align*}
L_p(t)=1+\frac{2\beta^2\cos^2(t\log p)-\beta\cos(2t\log p)}{p}
+O_{\beta}(p^{-3/2}).
\end{align*}
The finitely many remaining local factors are bounded above and below by positive constants depending only on $\beta$. Taking logarithms and using $2\cos^2 u=1+\cos(2u)$, we obtain
\begin{align*}
\mathbb{E} \left|F^{\mathrm{rand}}_{P}\left(\frac{1}{2}+it\right)\right|^{2\beta}
\asymp_{\beta}
\exp\left(\beta^2 \sum_{p \leq P} \frac{1}{p} + \beta(\beta-1)\sum_{p \leq P} \frac{\cos(2t \log p) }{p}  \right).
\end{align*}
The estimate (\ref{eq:betafirst}) now follows from Mertens' estimate. Finally, the bound (\ref{eq:betasecond}) follows from (\ref{eq:betafirst}) and (\ref{eq:longprimesum}) of Lemma \ref{lem:prime-sum}.
\end{proof}

\end{lemma}

For $2\leq z<w$ and $\Re(s)>0$, define
\begin{align*}
F_{(z,w]}^{\mathrm{rand}}(s)
:=\prod_{z<p\leq w}\left(1-\frac{e^{i\theta_p}}{p^s}\right)^{-1}
\left(1-\frac{e^{-i\theta_p}}{p^s}\right)^{-1}.
\end{align*}
For simplicity, we write $\exp_2(a):=\exp(e^a)$ for $a\geq 0$, and define  
$\mathcal{P}_k(a):= \{p: \exp_2(a+k-1) < p \le \exp_2(a+k)\}$.

\begin{lemma}\label{lem:ST-euler-product-2ndmom}
For any $t \in \mathbb{R}$ and $2 \le z < w$,
\begin{align}\label{eq:ST-euler-product-2ndmom}
\mathbb{E}\left[\left|F_{(z, w]}^{\mathrm{rand}}\left(\frac{1}{2}+it\right) \right|^2 \right] = \prod_{z < p \le w} \left(1 - \frac{1}{p}\right)^{-1}.
\end{align}

\noindent In particular, for any $a \ge 0$ and $0 \le b - a \le 1$, we have 
\begin{align*}
\mathbb{E}\left[\left|F_{\exp_2(a)}^{\mathrm{rand}}\left(\frac{1}{2}+it\right) \right|^2 \right] & \ll e^a \qquad
\text{and} \qquad \mathbb{E}\left[\left|F_{(\exp_2(a), \exp_2(b)]}^{\mathrm{rand}}\left(\frac{1}{2}+it\right) \right|^2 \right] \ll 1.
\end{align*}

\begin{proof}
Since
\begin{align*}
F_{P}^{\mathrm{rand}}\left(s\right)
= \prod_{p \le P} \sum_{\nu \ge 0} \frac{\mathbb{X}(p^\nu)}{p^{\nu s}},
\end{align*}

\noindent the moment formula \eqref{eq:ST-euler-product-2ndmom} follows immediately from Lemma \ref{lem:orthog}.
The remaining assertions follow from Mertens' estimate $\prod_{p \le \exp_2(a)} (1 - p^{-1})^{-1} \asymp e^a$.
\end{proof}

\end{lemma}

\section{Gaussian comparison and ballot theorem}
Motivated by the expansion in \eqref{eq:taylor-logep}, we introduce the notation
\begin{align*}
\xi_p^{(a)}(u)
& := \frac{2 \cos(\theta_p) \cos(e^{-a}u \log p)}{p^{\frac{1}{2}}}
+\frac{\cos(2\theta_p) \cos(2e^{-a}u \log p)}{p}
\end{align*}

\noindent and observe that
\begin{equation}\label{eq:xi-moment}
\begin{split}
\mathbb{E}[\xi_p^{(a)}(u)]
& = - \frac{\cos(2e^{-a} u \log p)}{2p},\\
\mathrm{Var}\left(\xi_p^{(a)}(u)\right)
& = \frac{\cos^2(e^{-a}u \log p)}{p} + O(p^{-2})
= \frac{1 + \cos(2e^{-a}u \log p)}{2p} + O(p^{-2}).
\end{split}
\end{equation}

\noindent We also let
\begin{align*}
S_0^{(a)}(u) := 0, \quad S_k^{(a)}(u) := \sum_{j=1}^k Y_j^{(a)}(u)
\quad \text{where} \quad Y_k^{(a)}(u) & := \sum_{p \in \mathcal{P}_k(a)} \xi_p^{(a)}(u),
\end{align*}

\noindent and whenever there is no risk of confusion we also suppress the dependence on $a$ in all of these expressions.

We would now like to introduce a notion that says the oscillatory terms coming from the moments of $\xi_p^{(a)}$ are negligible at suitable scales.
To do so, let us fix some sufficiently large constant $a_0$.
For any $a \ge a_0$, $N \ge 1$, and any interval $J \subset \mathbb{R}$,
we say $(a, N, J)$ is admissible if 
\begin{align} \label{eq:admissible}
\sup_{u \in J_2^+} 
\left| \sum_{p \in \mathcal{P}_k(a)} \frac{\cos(2e^{-a} u \log p)}{p}\right| 
\ll e^{-k}, \qquad 1 \le k \le N
\end{align}

\noindent where $J_2^+ := \{u \in \mathbb{R}: |u-J| \le 2\}$
and the implicit constant (which depends only on $a_0$) is fixed throughout.

\begin{lemma}\label{lem:cumulant}
Let $(a, N, J)$ be admissible.
Uniformly in $u \in J_2^+$, $1 \le k \le N$, and $|\lambda| \le L$ for fixed $L > 0$, we have
\begin{align}
\label{eq:cumulant-Y}
\log \mathbb{E}\left[e^{\lambda Y_k^{(a)}(u)}\right] & = \frac{\lambda^2}{4} + O_L(e^{-k}), \\
\label{eq:cumulant-S}
\log \mathbb{E}\left[e^{\lambda S_k^{(a)}(u)}\right] & = \frac{\lambda^2 k}{4} + O_L(1).
\end{align}

\noindent The same estimates hold after applying up to three derivatives with respect to $\lambda$.
\end{lemma}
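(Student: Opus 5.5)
Since $Y_k^{(a)}(u)$ is a sum of independent random variables $\xi_p^{(a)}(u)$ over $p\in\mathcal{P}_k(a)$, I will compute the cumulant generating function prime by prime and sum the contributions. Then \eqref{eq:cumulant-S} follows from \eqref{eq:cumulant-Y} by independence across the disjoint blocks $\mathcal{P}_1(a),\dots,\mathcal{P}_k(a)$: summing over $j\le k$ gives $\lambda^2k/4+O_L(\sum_j e^{-j})=\lambda^2k/4+O_L(1)$. So the real work is \eqref{eq:cumulant-Y}.

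For a single prime $p\in\mathcal{P}_k(a)$ we have $p>\exp_2(a_0)$, which is huge, and $|\xi_p|\le 3p^{-1/2}$ deterministically. Hence for $|\lambda|\le L$ the expansion
\[
\mathbb{E}e^{\lambda\xi_p}=1+\lambda\mathbb{E}\xi_p+\tfrac{\lambda^2}{2}\mathbb{E}\xi_p^2+O_L(p^{-3/2})
\]
holds, and the logarithm of the right-hand side is $\lambda\mathbb{E}\xi_p+\tfrac{\lambda^2}{2}\mathrm{Var}(\xi_p)+O_L(p^{-3/2})$. Here I use that $(\mathbb{E}\xi_p)^2=O(p^{-2})$. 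Inserting the moments \eqref{eq:xi-moment} turns this into
\[
\frac{\lambda^2}{4p}+\Big(\frac{\lambda^2}{4}-\frac{\lambda}{2}\Big)\frac{\cos(2e^{-a}u\log p)}{p}+O_L(p^{-3/2}).
\]
Summing over $p\in\mathcal{P}_k(a)$ gives three contributions:
\begin{itemize}
\item The first term gives $\frac{\lambda^2}{4}\sum_{p\in\mathcal{P}_k(a)}1/p$. By Mertens' theorem with the de la Vall\'ee Poussin error term, this sum equals $\log\log\exp_2(a+k)-\log\log\exp_2(a+k-1)+O(\exp(-c\,e^{(a+k-1)/2}))=1+O(e^{-k})$, since $a\ge a_0$.
\item The oscillatory term is exactly $O(e^{-k})$ by admissibility \eqref{eq:admissible}, because $u\in J_2^+$.
\item The error sum $\sum_{p>\exp_2(a+k-1)}p^{-3/2}$ is doubly exponentially small, so certainly $O(e^{-k})$.
\end{itemize}
This proves \eqref{eq:cumulant-Y}.

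For the derivatives in $\lambda$ (up to three), I will argue as follows. The function $\lambda\mapsto\log\mathbb{E}e^{\lambda\xi_p}$ is analytic in the complex disc $|\lambda|\le 2L$, and in that disc it equals the quadratic polynomial above plus a remainder bounded by $O_L(p^{-3/2})$ uniformly. The same Taylor argument works for complex $\lambda$, since $|\lambda\xi_p|$ is small. By Cauchy's estimates, the first three $\lambda$-derivatives of the remainder are also $O_L(p^{-3/2})$ on $|\lambda|\le L$. Differentiating the explicit polynomial part is trivial, and its coefficients satisfy the same $O(e^{-k})$ bounds as before. Summing over primes and blocks then gives the derivative statements for both \eqref{eq:cumulant-Y} and \eqref{eq:cumulant-S}.

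The main point needing care is uniformity: the remainder must be uniform in $u$ and $a$. This holds because the bound $|\xi_p|\le 3p^{-1/2}$ does not depend on $u$, and the Mertens error decays in $e^{a+k}$.
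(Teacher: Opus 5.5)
Your proposal is correct and follows essentially the same route as the paper. Both expand the cumulant prime by prime using $|\xi_p|\ll p^{-1/2}$ and the moments \eqref{eq:xi-moment}, then sum over $\mathcal{P}_k(a)$ using the prime number theorem (the paper cites \eqref{eq:psum-needed-random1} at $\tau=0$) and admissibility, and obtain \eqref{eq:cumulant-S} by summing over blocks; your Cauchy-estimate argument just makes explicit the derivative claim that the paper asserts in one line.
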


\begin{proof}
We focus on the proof of \eqref{eq:cumulant-Y} since \eqref{eq:cumulant-S} follows by summation.
Since $|\xi_p(u)| \ll p^{-1/2}$, we obtain by Taylor expansion that
\begin{align*}
\log \mathbb{E}\left[e^{\lambda \xi_p(u)}\right]
& = \lambda \mathbb{E}[\xi_p(u)] + \frac{\lambda^2}{2} \mathrm{Var}(\xi_p(u)) + O_L(p^{-3/2})\\
& = \frac{\lambda^2}{4p} + \frac{\lambda(\lambda-2)}{4p}\cos(2e^{-a} u\log p) + O_L(p^{-3/2})
\end{align*}

\noindent and the same expansion may be differentiated three times in $\lambda$ with the same error control.
Summing over $p \in \mathcal{P}_k(a)$, the desired estimate now follows by Lemma \ref{lem:prime-sum} (using \eqref{eq:psum-needed-random1} with $\tau = 0$)
and the $(a, N, J)$-admissibility condition.
\end{proof}

\begin{lemma}[Gaussian approximation]\label{lem:gaussian_approx}
Let $(a, N, J)$ be admissible,
and write $\mu_k(u) := \mathbb{E}[Y_k^{(a)}(u)]$ and $\sigma_k^2(u) := \mathrm{Var}(Y_k^{(a)}(u))$.
There exists some $c > 0$ such that for $G_{k, u} \sim \mathcal{N}(\mu_k(u), \sigma_k^2(u))$, we have
\begin{align}\label{eq:berry-esseen}
\sup_{x \in \mathbb{R}} \left|\mathbb{P}(Y_k^{(a)}(u) \le x) - \mathbb{P}(G_{k, u} \le x) \right|
\ll \exp \left(-c e^{a+k-1}\right).
\end{align}

\noindent In particular, if $N_k \sim \mathcal{N}(0, 1/2)$, $\delta_k := (k+1)^{-4}$ and $|v| \le Ck$,
then 
\begin{align}\label{eq:gapprox-2}
\mathbb{P}(Y_k^{(a)}(u) \in [v, v+\delta_k]) \le (1+\eta_k) \mathbb{P}(N_k \in [v, v+\delta_k]),
\qquad \eta_k \ll_C k^2 e^{-k}
\end{align}

\noindent uniformly in $u \in J$ provided that $a_0$ is sufficiently large.
\end{lemma}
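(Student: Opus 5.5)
We prove \eqref{eq:berry-esseen} through characteristic functions and Esseen's smoothing inequality. Set $\phi(\lambda) := \mathbb{E}[e^{i\lambda Y_k(u)}]$ and $\psi(\lambda) := \mathbb{E}[e^{i\lambda G_{k,u}}]$. Esseen's inequality gives, for any $T>0$,
\begin{align*}
\sup_x \left|\mathbb{P}(Y_k(u)\le x)-\mathbb{P}(G_{k,u}\le x)\right| \ll \int_{-T}^{T}\frac{|\phi(\lambda)-\psi(\lambda)|}{|\lambda|}\,d\lambda + \frac{1}{\sigma_k(u) T}.
\end{align*}
By \eqref{eq:xi-moment}, Lemma \ref{lem:prime-sum} and admissibility, $\sigma_k^2(u) = \tfrac14 + O(e^{-k})$. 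So we may take $T = \exp(c e^{a+k-1})$, provided the integral is exponentially small on this range.

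The variable $Y_k(u)$ is a sum of independent summands $\xi_p(u)$ with $p\in\mathcal{P}_k(a)$. Each satisfies $|\xi_p|\ll p^{-1/2}\le \exp(-\tfrac12 e^{e^{a+k-1}})$, and there are about $e^{e^{a+k}}$ of them. We write $\phi=\prod_p\phi_p$ and $\psi=\prod_p\psi_p$, where $\psi_p$ is the Gaussian characteristic function with the mean and variance of $\xi_p$. This is legitimate because means and variances add, so $G_{k,u}$ is the convolution of these Gaussians.

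\textbf{Small $|\lambda|$.} For $|\lambda| \le p_{\min}^{1/2}/10$, where $p_{\min}:=\exp_2(a+k-1)$, a third-order Taylor expansion gives
\begin{align*}
\log\phi_p(\lambda)-\log\psi_p(\lambda) = O\big(|\lambda|^3\,\mathbb{E}|\xi_p-\mathbb{E}\xi_p|^3\big) = O\big(|\lambda|^3 p^{-3/2}\big).
\end{align*}
Summing over $p$, $\sum_{p\in\mathcal P_k} p^{-3/2}\ll p_{\min}^{-1/2}$. Hence on this range
\begin{align*}
|\phi-\psi|\le |\psi|\,\big|e^{O(|\lambda|^3 p_{\min}^{-1/2})}-1\big|.
\end{align*}
Combined with the factor $|\psi(\lambda)|=e^{-\sigma_k^2\lambda^2/2}$, the contribution of $|\lambda|\le p_{\min}^{1/6}$ to the Esseen integral is $\ll p_{\min}^{-1/2}$. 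This is far smaller than $\exp(-ce^{a+k-1})$, since $p_{\min}=\exp(e^{a+k-1})$.

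\textbf{Intermediate $|\lambda|$.} For $p_{\min}^{1/6}\le|\lambda|\le p_{\min}^{1/2}/10$, the Gaussian factor is tiny. To bound $|\phi|$ we use the per-prime estimate $|\phi_p(\lambda)|\le 1-c\lambda^2\mathrm{Var}(\xi_p)$, valid because $|\lambda\xi_p|\le 1/10$. Multiplying over $p$ gives $|\phi|\le e^{-c\lambda^2\sigma_k^2}$, which is also negligible.

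\textbf{Large $|\lambda|$ (main obstacle).} The range $p_{\min}^{1/2}\ll|\lambda|\le T$ is where Taylor expansion fails, and I expect it to be the hardest step. For each prime with $p \approx \lambda^2$, i.e.\ $\lambda p^{-1/2}\asymp1$ up to constants, the variable $\lambda\xi_p$ is essentially $2\lambda p^{-1/2}\cos(\theta_p)\cos(\cdot)$, since the $p^{-1}$ term is negligible. Its characteristic function is a Bessel-type integral against the Sato--Tate density, bounded by $1-c$ provided the factor $|\cos(e^{-a}u\log p)|$ is not small. Two things must be checked:
\begin{itemize}
\item for a positive proportion of primes in a dyadic range $p\in[\lambda^2,2\lambda^2]\cap\mathcal P_k$, we have $|\cos(e^{-a}u\log p)|\ge1/2$;
\item this dyadic range lies inside $\mathcal P_k$ as long as $\lambda^2\le \exp_2(a+k)$.
\end{itemize}
Granting these, the number of such primes is about $\lambda^2/\log\lambda$, so $|\phi(\lambda)|\le \exp(-c\lambda^2/\log\lambda)$, which beats $T$. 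If the second point is troublesome, an alternative is to take $T$ only up to $p_{\min}^{1/2}$. That suffices because $1/T = \exp(-\tfrac12 e^{a+k-1})$ already has the required form $\exp(-ce^{a+k-1})$, which removes the large-$\lambda$ range entirely. I would adopt this simpler route.

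\textbf{Deducing \eqref{eq:gapprox-2}.} Write $\mathbb{P}(Y_k\in[v,v+\delta_k])\le \mathbb{P}(G_{k,u}\in[v,v+\delta_k])+O(\exp(-ce^{a+k-1}))$. Compare $G_{k,u}$ with $N_k$ using
\begin{align*}
\mu_k=-\tfrac12\sum_{p\in\mathcal P_k}\frac{\cos(2e^{-a}u\log p)}{p}=O(e^{-k}) \quad\text{and}\quad \sigma_k^2=\tfrac12+O(e^{-k})
\end{align*}
from admissibility. The Gaussian density ratio on $|x|\le Ck+1$ is then $1+O_C(k^2e^{-k})$, because the shift in the exponent is $O(|x|\,|\mu_k|+x^2|\sigma_k^2-\tfrac12|)$. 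Finally, the additive error is absorbed multiplicatively: $\mathbb{P}(N_k\in[v,v+\delta_k])\gg \delta_k e^{-C^2k^2}$, whereas $\exp(-ce^{a+k-1})$ is far smaller than $k^{-4}e^{-C^2k^2}\cdot k^2e^{-k}$ once $a_0$ is large.
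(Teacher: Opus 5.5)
Your proof is correct and follows the paper's route. The paper simply invokes the classical Berry--Esseen theorem with the third-moment sum $\sum_{p\in\mathcal{P}_k(a)}p^{-3/2}\ll \exp(-\tfrac12 e^{a+k-1})$, and your smoothing-inequality argument with $T\asymp p_{\min}^{1/2}$ (once the unnecessary large-$|\lambda|$ range is dropped) is a hand proof of that theorem; the Gaussian density comparison on $|x|\le Ck+1$ and the absorption of the additive error into $\eta_k$ are the same as in the paper. The only slip is the early claim $\sigma_k^2=\tfrac14+O(e^{-k})$, which should read $\tfrac12+O(e^{-k})$ as you use later; it is harmless, since Esseen's inequality only needs $\sigma_k\asymp 1$.
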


\begin{proof}
It is straightforward to verify that
\begin{align*}
\mu_k(u) \ll e^{-k}, \qquad \sigma_k^2(u) = \frac{1}{2} + O(e^{-k}).
\end{align*}

By the classical Berry--Esseen theorem (see e.g. \cite[Theorem 3.6]{CGS2011}), we have
\begin{align*}
\sup_{x \in \mathbb{R}} \left|\mathbb{P}(Y_k(u) \le x) - \mathbb{P}(G_{k, u} \le x) \right|
&\ll \sum_{p \in \mathcal{P}_k(a)} \mathbb{E}\left[|\xi_p(u) - \mathbb{E}[\xi_p(u)]|^3\right] \\
& \ll \sum_{p \in \mathcal{P}_k(a)} p^{-3/2} \ll \exp \left(-c e^{a+k-1}\right)
\end{align*}

\noindent which is \eqref{eq:berry-esseen}. Finally, one can compare the densities of $G_{k,u}$ and $N_k$ and check that 
\begin{align*}
\left|\log \frac{\mathbb{P}(G_{k,u} \in dx)}{\mathbb{P}(N_k \in dx)}\right|
\ll \frac{\mu_k(u)}{\sigma_k^2(u)} |x| + \frac{\mu_k(u)^2}{\sigma_k^2(u)} + |\sigma_k^2(u) - \tfrac{1}{2}| x^2
\ll_C k^2 e^{-k}, \qquad |x| \le 2Ck.
\end{align*}

\noindent Since the error in \eqref{eq:berry-esseen} is bounded by $\eta_k \mathbb{P}(N_k \in [v, v+\delta_k])$ 
for $a_0$ sufficiently large, the claim \eqref{eq:gapprox-2} now follows and our proof is complete.
\end{proof}

\begin{lemma}\label{lem:local_density}
Let $(a, N, J)$ be admissible and $r_0 > 0$ be some sufficiently large integer.
For any fixed $C>0$, we have
\begin{align}
\mathbb{P}(S_r^{(a)}(u) \in [v, v+\delta_r]) \ll_C \frac{\delta_r}{\sqrt{r}} e^{-v^2 / r}
\end{align}

\noindent uniformly in $u \in J_2^+$, $r_0 \le r \le N$, $|v| \le Cr$ and $\delta_r := (r+1)^{-4}$.
In particular, we have $\mathbb{P}(S_r^{(a)}(u) \in [v, v+\delta_r]) \ll_C \mathbb{P}(G_{r} \in [v, v+\delta_r])$ 
where $G_{r} \sim \mathcal{N}(0, r/2)$.
\end{lemma}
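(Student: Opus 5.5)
Here $S_r=\sum_{k\le r}Y_k$ is a sum of independent, nearly Gaussian increments. The plan is to prove a local limit estimate for $S_r$ via characteristic functions and Fourier inversion, using the cumulant control of Lemma \ref{lem:cumulant}. The admissibility hypothesis is only needed to keep the mean and variance of $S_r$ under control. Write $\varphi_r(\xi):=\mathbb{E}[e^{i\xi S_r(u)}]=\prod_{k\le r}\mathbb{E}[e^{i\xi Y_k(u)}]$.

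\textbf{Smoothing and Fourier inversion.} To handle the tiny window $\delta_r=(r+1)^{-4}$, majorize $\mathbf{1}_{[v,v+\delta_r]}$ by a Beurling--Selberg type function $\Phi$. Choose $\Phi$ with $\widehat\Phi$ supported in $|\xi|\le \Xi$, where $\Xi\asymp\delta_r^{-1}$ is a large power of $r$, and with $\int\Phi\ll\delta_r$. Then
\begin{align*}
\mathbb{P}(S_r\in[v,v+\delta_r])\le\frac{1}{2\pi}\int_{|\xi|\le \Xi}\widehat\Phi(\xi)\varphi_r(-\xi)\,d\xi .
\end{align*}

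\textbf{Range $|\xi|\le c_0$.} Lemma \ref{lem:cumulant}, with its three-derivative control, extends to complex $\lambda$ of bounded size, since the Taylor expansion of $\log\mathbb{E}e^{\lambda\xi_p}$ used there works verbatim for $|\lambda|\le L$ complex. This gives $\log\varphi_r(\xi)=i\xi m_r-\tfrac{\xi^2}{2}s_r^2+O(|\xi|^3 r)$. Here $m_r=\sum\mu_k=O(1)$ and $s_r^2=r/2+O(1)$ by admissibility.

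\textbf{Range $c_0\le|\xi|\le\Xi$.} Use only the first few blocks of primes, or rather many of them. For $|\xi|\le c_0$ each factor satisfies $|\mathbb{E}e^{i\xi Y_k}|\le e^{-c\xi^2}$. For larger $|\xi|$, Lemma \ref{lem:gaussian_approx} shows $Y_k$ is within $\exp(-ce^{a+k-1})$ of a Gaussian with variance about $1/2$ in Kolmogorov distance. So $|\mathbb{E}e^{i\xi Y_k}|\le e^{-\xi^2/4}+O(|\xi|\exp(-ce^{a+k-1}))$. Since $|\xi|\le r^{O(1)}$ and $k\le r$, the error is $\ll e^{-\xi^2/8}$ once $k\ge C\log\log r$ (and $a_0$ is large). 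Hence $|\varphi_r(\xi)|\le \exp(-c r)$ on this range, which after multiplying by $\Xi\delta_r\ll1$ is negligible.

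\textbf{Gaussian evaluation.} On $|\xi|\le c_0$ the cubic error is dominated by the Gaussian decay: split at $|\xi|\le r^{-1/3}$ and bound the remaining part trivially by $e^{-c r^{1/3}}$. The main term then becomes $\int\Phi(x)\,\frac{1}{\sqrt{\pi r}}e^{-(x-m_r)^2/r}(1+O(\cdot))\,dx$. This requires the complex-shift refinement below when $|v|\asymp r$.

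\textbf{Main obstacle: large deviations.} For $|v|$ as large as $Cr$, plain Fourier inversion only yields $\delta_r r^{-1/2}$, not the factor $e^{-v^2/r}$. To fix this, first apply an exponential tilt: with $\lambda=2v/r$ (so $|\lambda|\le 2C$, and Lemma \ref{lem:cumulant} applies with $L=2C$), write
\begin{align*}
\mathbb{P}(S_r\in I)\le e^{-\lambda v+\log\mathbb{E}e^{\lambda S_r}}\,\widetilde{\mathbb{P}}(S_r\in I).
\end{align*}
By \eqref{eq:cumulant-S} the prefactor is $\ll_C e^{-\lambda v+\lambda^2 r/4}=e^{-v^2/r}$. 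Under the tilted measure the increments remain independent, and their cumulants are again controlled by Lemma \ref{lem:cumulant} up to three derivatives. So the tilted variance is $r/2+O(1)$, and the Berry--Esseen comparison of each block persists because the tilted laws of $\xi_p$ differ only by bounded factors. Running the Fourier argument above under $\widetilde{\mathbb{P}}$ gives $\widetilde{\mathbb{P}}(S_r\in I)\ll\delta_r/\sqrt r$, which completes the proof. The final comparison with $G_r\sim\mathcal N(0,r/2)$ is immediate, because that Gaussian has density $\asymp r^{-1/2}e^{-v^2/r}$ on $I$.
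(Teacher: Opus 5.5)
Your argument is correct, and its first step is the paper's. The exponential tilt with $\lambda=2v/r$ yields $e^{-v^2/r}$ via \eqref{eq:cumulant-S}. It reduces everything to the uniform bound $\sup_x\widetilde{\mathbb{P}}(S_r\in[x,x+\delta_r])\ll\delta_r/\sqrt r$. When $v<0$ you dropped a harmless factor $e^{|\lambda|\delta_r}$.

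\textbf{Where the routes differ.} They differ in how that local bound under the tilted measure is proved.
\begin{itemize}
\item The paper never touches characteristic functions. It splits $S_r=S_{r,-}+S_{r,+}$ at $k\approx r/2$, conditions on $S_{r,-}$, and applies the real-variable Berry--Esseen theorem to $S_{r,+}$ alone. Every prime there has $\log p>e^{a+r/2}$, so the third-moment sum is $\ll\exp(-ce^{a+r/2})$. This Kolmogorov error is doubly exponentially small, far below $\delta_r/\sqrt r$, so the Gaussian density bound transfers at once.
\item You majorize the indicator by a band-limited Beurling--Selberg function and show $\int_{|\xi|\le\Xi}|\widetilde\varphi_r(\xi)|\,d\xi\ll r^{-1/2}$. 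This needs two extra ingredients: Lemma \ref{lem:cumulant} for complex $\lambda$, and a separate high-frequency decay estimate.
\end{itemize}
Your route is the standard local-limit machinery and could give an asymptotic. It costs more than the paper's short trick, which exploits the tiny size of the summands. Your ``Gaussian evaluation'' paragraph is not needed for an upper bound: $|\widehat\Phi|\ll\delta_r$ plus the integral bound suffice.

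\textbf{Two slips in the high-frequency step.} Neither affects the conclusion.
\begin{itemize}
\item Kolmogorov distance $\varepsilon$ alone does not give $|\mathbb{E}e^{i\xi Y_k}-\mathbb{E}e^{i\xi G}|\ll|\xi|\varepsilon$. You need to truncate at some level $T$ using the exponential tails of $Y_k$, which costs a factor $T$ plus tail terms. Alternatively, bound $|\mathbb{E}e^{i\xi\xi_p}|^2$ directly via $\cos y\le 1-y^2/2+y^4/24$.
\item The claim that the error is $\ll e^{-\xi^2/8}$ once $k\ge C\log\log r$ is false for $|\xi|$ of polynomial size in $r$. What you actually need, and what does hold for $k\ge\log\log r+O(1)$ and $|\xi|\ge c_0$, is that each such block factor is at most a fixed $\rho<1$. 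That still gives $|\varphi_r(\xi)|\le e^{-cr}$ on $c_0\le|\xi|\le\Xi$, both before and after tilting.
\end{itemize}
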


\begin{proof}
Write $\lambda :=2v/r$ and $K_r(\lambda) := \log \mathbb{E}[e^{\lambda S_r(u)}]$.
Then
\begin{align*}
\mathbb{P}(S_r(u) \in [v, v+\delta_r])
&\le \mathbb{E}\left[e^{\lambda(S_r(u) - v) + |\lambda| \delta_r} \mathbf{1}_{\{S_r(u) \in [v, v+\delta_r]\}}\right]\\
& \ll_C e^{-\lambda v +K_r(\lambda)} \sup_{x \in \mathbb{R}}\mathbb{P}_{\lambda}(S_r(u) \in [x, x+\delta_r])
\end{align*}
\noindent with the probability measure $\mathbb{P}_\lambda$ (and the associated expectation $\mathbb{E}_{\lambda}$) defined via $d\mathbb{P}_{\lambda} / d\mathbb{P} = e^{\lambda S_r(u) - K_r(\lambda)}$.
Under the tilted measures, the increments $\xi_p(u)$ are still independent of each other.
In particular, if we write
\begin{align*}
    S_r(u) = \sum_{j < r/2} Y_j(u) + \sum_{j = \lfloor r/2 \rfloor}^r Y_j(u) =: S_{r, -}(u) + S_{r, +}(u),
\end{align*}

\noindent we can condition on $S_{r, -}(u)$ and consider comparison against a Gaussian random variable with a matching conditional mean and variance as $S_r(u)$.
Using \eqref{eq:cumulant-Y} with two derivatives in Lemma \ref{lem:cumulant}, we see that 
\begin{align*}
    \mathbb{E}_{\lambda}[|S_{r, +}(u) - \mathbb{E}_\lambda[S_{r, +}(u)]|^2]
    = \sum_{j = \lfloor r/2 \rfloor}^r\mathbb{E}_{\lambda}[|Y_{j}(u) - \mathbb{E}_\lambda[Y_j(u)]|^2] 
    \asymp r.
\end{align*}

\noindent Combining this with the trivial third moment estimates
\begin{align*}
\sum_{\log p > e^{a+r/2}} \mathbb{E}_{\lambda}[|\xi_p(u) - \mathbb{E}_{\lambda}[\xi_p(u)]|^3]
\ll \sum_{\log p > e^{a+r/2}} p^{-3/2} \ll \exp(-ce^{a+r/2})
\end{align*}

\noindent and the usual density estimate for a Gaussian distribution, we obtain
\begin{align*}
\sup_{x \in \mathbb{R}} \mathbb{P}_\lambda(S_r(u) \in [x, x+\delta_r]) 
\ll_C \frac{\delta_r}{\sqrt{r}} + \exp(-ce^{a+r/2})
\ll_C \frac{\delta_r}{\sqrt{r}}
\end{align*}

\noindent which concludes our proof.
\end{proof}

\begin{lemma}\label{lem:maxbound}
Let $(a, N, J)$ be admissible, $I \subset J$ be an interval of length at most $e^{-k}$,
and $r_0 > 0$ be a sufficiently large integer. 
For any fixed $C > 0$, we have 
\begin{align}
\mathbb{P}(\max_{u \in I} |S_k^{(a)}(u)| > x) \ll_C e^{-x^2 / k}
\end{align}

\noindent uniformly in $r_0 \le k \le N$ and $0 \le x \le Ck$.
\end{lemma}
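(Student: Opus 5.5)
We bound the maximum over the short interval $I$ by the value at a single point plus a fluctuation term. Let $u_0$ be the left endpoint of $I$. Then
\begin{align*}
\max_{u\in I}|S_k(u)| \le |S_k(u_0)| + \max_{u\in I}|S_k(u)-S_k(u_0)|.
\end{align*}
The key point is that $|I|\le e^{-k}$ is matched to the scale of primes in $\mathcal{P}_j(a)$ with $j\le k$. There $\log p\le e^{a+j}$, so $e^{-a}u\log p$ varies by at most $e^{j-k}$ as $u$ ranges over $I$. Hence the increments $Y_j(u)-Y_j(u_0)$ are small, with variance $\ll e^{2(j-k)}$ by \eqref{eq:xi-moment}.

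\textbf{Step 1: the single point.} Since $u_0\in J_2^+$, the exponential Chebyshev inequality and \eqref{eq:cumulant-S} of Lemma \ref{lem:cumulant} give
\begin{align*}
\mathbb{P}(\pm S_k(u_0)>y)\le \exp\bigl(-\lambda y+\lambda^2k/4+O_C(1)\bigr).
\end{align*}
We take $\lambda=2y/k$, which satisfies $|\lambda|\le 2C$ for $y\le Ck$. This yields $\mathbb{P}(|S_k(u_0)|>y)\ll_C e^{-y^2/k}$.

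\textbf{Step 2: the fluctuation.} We control $D:=\max_{u\in I}|S_k(u)-S_k(u_0)|$ by a chaining or Sobolev-type argument. Write
\begin{align*}
S_k(u)-S_k(u_0)=\int_{u_0}^u S_k'(w)\,dw,
\end{align*}
so that $D\le \int_I|S_k'(w)|\,dw$. Here $S_k'(w)=\sum_{j\le k}\sum_{p\in\mathcal{P}_j}\partial_w\xi_p(w)$, and each $\partial_w\xi_p$ carries a factor $e^{-a}\log p\le e^{j}$.

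The next task is a cumulant bound $\log\mathbb{E}\,e^{\lambda S_k'(w)}\ll \lambda^2 e^{2k}$ for all $w\in J_2^+$ and $|\lambda|\ll e^{-k}$. This follows exactly as in Lemma \ref{lem:cumulant}: each summand is $\ll e^{j}p^{-1/2}$, the mean contributions are $\ll \sum_j e^{j}\cdot e^{-j}\ll k$ by admissibility, and the variance is $\sum_j e^{2j}\ll e^{2k}$.

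Set $Z:=e^{-k}S_k'$, which has $O(1)$ sub-Gaussian behaviour. Then $D\le e^{k}|I|\cdot\frac{1}{|I|}\int_I|Z|\le \frac{1}{|I|}\int_I |Z|$. By Jensen's inequality applied to $\exp(\lambda\,\cdot)$ averaged over $I$, we get $\mathbb{E}\,e^{\lambda D}\le \sup_w \mathbb{E}\,e^{\lambda|Z(w)|}\ll e^{O(\lambda^2)+O(\lambda k e^{-k})}$ for bounded $\lambda$.

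In fact this is too weak: we need exponential moments of $D$ with $\lambda\asymp y/k$, which is bounded, so it suffices. Thus $\mathbb{P}(D>z)\ll e^{-cz^2}$ uniformly.

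\textbf{Step 3: combining.} We split $x=y+z$ with $z:=\sqrt{x}$ or a slowly chosen value, but a cleaner route avoids the loss in the Gaussian exponent. We apply Cauchy--Schwarz or Hölder to the tilted exponential directly:
\begin{align*}
\mathbb{E}\,e^{\lambda \max_I S_k}\le \mathbb{E}\bigl[e^{\lambda S_k(u_0)}e^{\lambda D}\bigr].
\end{align*}
Then $\lambda=2x/k$ is bounded. The fluctuation $D$ depends on the same primes, so we use Hölder with exponents $(1+\epsilon,(1+\epsilon)/\epsilon)$, with $\epsilon$ fixed but small. This yields $e^{-\lambda x+(1+\epsilon)\lambda^2k/4+O(1)}$, which produces $e^{-x^2/((1+\epsilon)k)}$. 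That exponent falls short of the claim by a factor $e^{\epsilon x^2/k}$, which is unbounded when $x\asymp k$.

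\textbf{Main obstacle.} Obtaining exactly $e^{-x^2/k}$ rather than $e^{-(1-\epsilon)x^2/k}$ is the hardest step. To resolve it, I would compute the joint cumulant of $(S_k(u_0),\,S_k(u)-S_k(u_0))$ via Lemma \ref{lem:cumulant}-style Taylor expansion. The cross term is $\sum_j \mathrm{Cov}(Y_j(u_0),Y_j(u)-Y_j(u_0))\ll\sum_j e^{j-k}\ll1$, and the increment variance is also $O(1)$. Hence
\begin{align*}
\log\mathbb{E}\,e^{\lambda S_k(u_0)+\mu(S_k(u)-S_k(u_0))}=\lambda^2k/4+O(1)
\end{align*}
for bounded $\lambda,\mu$. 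Applying this inside a chaining over dyadic subintervals of $I$ (Dudley-type) controls $\mathbb{E}\,e^{\lambda\max_I S_k}\ll e^{\lambda^2k/4+O(1)}$. The optimisation then gives the stated bound, and the same argument for $-S_k$ completes the proof.
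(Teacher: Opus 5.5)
Your eventual resolution is the paper's argument: a joint Laplace-transform bound for $S_k(u_0)$ together with an increment $S_k(u_1)-S_k(u_2)$, whose leading term is exactly $\lambda^2k/4$ because the cross-covariance over a window of length $e^{-k}$ is $O(1)$, followed by chaining over dyadic subintervals of $I$. For the chaining to close, the increment terms must be recorded in the scale-dependent form $O\bigl(1+\mu\rho+(\mu\rho)^2\bigr)$ with $\rho:=e^k|u_1-u_2|$ and $\mu$ allowed up to $|u_1-u_2|^{-1}$, as the paper does, rather than just $O(1)$ for bounded $\mu$; alternatively, since $\max_I S_k\le S_k(u_0)+|I|^{-1}\int_I|Z|$, Jensen over $I$ gives $\mathbb{E}\,e^{\lambda\max_I S_k}\le\sup_{w\in I}\mathbb{E}\,e^{\lambda S_k(u_0)+\lambda|Z(w)|}$, and the same cumulant computation for the pair $(S_k(u_0),\pm Z(w))$ (cross-covariance $\ll e^{-k}\sum_{j\le k}e^j\ll 1$) finishes at bounded $\lambda=2x/k$ with no chaining at all.
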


\begin{proof}
We will only sketch the maximal inequality for $S_k(u)$ but the same argument applies to $-S_k(u)$,
and our approach follows \cite[Appendix B]{hamdan2026partialsumsrandommultiplicative} closely.
To begin with, consider any $u_0, u_1, u_2 \in J_2^+$ satisfying
\begin{align*}
|u_1 - u_2| \le e^{-k} \qquad \text{and} \qquad D:= e^k |u_1 - u_2| \le 1
\end{align*}

\noindent and we want to show that
\begin{align}\label{eq:joint-tail}
\mathbb{P}(S_k(u_0) > x, S_k(u_1) - S_k(u_2) > z) \ll_C \exp(-x^2 /k - c z^{3/2} / D)
\end{align}

\noindent for some $c \in (0, \infty)$, uniformly in $0 \le x \le Ck$ and $0 \le z \le e^{2k}$.
Based on Chernoff bound (see the proof of \cite[Lemma B.1]{hamdan2026partialsumsrandommultiplicative}),
it suffices to show that for any $L > 0$, there exists some constant $C_L > 0$ such that
\begin{align}\label{eq:pf-laplace}
\log \mathbb{E}\left[\exp \left(\lambda_1 S_k(u_0) + \lambda_2(S_k(u_1) - S_k(u_2))\right)\right]
\le \frac{\lambda_1^2k}{4} + C_L\left[1+(\lambda_2D) + (\lambda_2D)^2\right]
\end{align}

\noindent uniformly in $0 \le \lambda_1 \le L$ and $0 \le \lambda_2|u_1 - u_2| \le 1$,
and in particular with the choice $\lambda_1 = 2x/k$ and $\lambda_2 = c \sqrt{z} / D$.
If we write
\begin{align*}
A_p := \lambda_1 \xi_p(u_0), \qquad B_p := \lambda_2 \left[\xi_p(u_1) -\xi_p(u_2)\right], \qquad W_p := A_p + B_p,
\end{align*}

\noindent then the joint Laplace transform is equal to 
\begin{align*}
\sum_{e^a < \log p \le e^{a+k}} \log \mathbb{E}[\exp(W_p)]
& = \sum_{e^a < \log p \le e^{a+k}} \left(\mathbb{E}[W_p] + \frac{1}{2}\mathrm{Var}(W_p) + O_L(\mathbb{E}[|W_p|^3])\right)
\end{align*}

\noindent by Taylor expansion. This is possible because of the deterministic bound
$|W_p| \ll (\lambda_1 +e^{-a}|u_1 - u_2|\lambda_2 \log p) / p^{1/2}$,
which also implies the third moment estimate
\begin{align*}
\sum_{e^a < \log p \le e^{a+k}} \mathbb{E}[|W_p|^3]
&\ll \sum_{e^a < \log p \le e^{a+k}} \left[\frac{\lambda_1^3}{p^{3/2}} + \frac{\left(e^{-a}|u_1 - u_2|\lambda_2 \log p\right)^3}{p^{3/2}}\right]
\ll_L 1.
\end{align*}

\noindent We then perform moment estimates using \eqref{eq:ST-moments}, the admissibility condition \eqref{eq:admissible} of $(a, N, J)$,
and the bounds $\sum_{e^a < \log p \le e^{a+k}} \log^\ell p / p \ll e^{\ell(a+k)}$ for any $\ell \ge 1$ deduced from partial summation with $\pi(x) \ll x / \log x$.
For the first two moments of $A_p$, we have:
\begin{align*}
\left| \sum_{e^a < \log p \le e^{a+k}}\mathbb{E}[A_p] \right|
& = \lambda_1 \sum_{e^a < \log p \le e^{a+k}}\frac{\cos(2e^{-a} u_0 \log p)}{p} \ll \lambda_1,\\
\mathrm{Var}\left(\sum_{e^a < \log p \le e^{a+k}}A_p\right)
& = \lambda_1^2\left[\sum_{e^a < \log p \le e^{a+k}} \left(\frac{1 + \cos(2e^{-a} u_0 \log p)}{2p} + O(p^{-2})\right)\right] \\
& = \frac{\lambda_1^2 k}{2} + O_L(1).
\end{align*}

\noindent As for $B_p$, we have
\begin{align*}
\left|\sum_{e^a < \log p \le e^{a+k}} \mathbb{E}[B_p]\right|
& = \lambda_2\left| \sum_{e^a < \log p \le e^{a+k}} \frac{\cos(2e^{-a} u_1 \log p) - \cos(2e^{-a} u_2 \log p)}{2p} \right|\\
& \ll \lambda_2e^{-a}|u_1 - u_2|\left| \sum_{e^a < \log p \le e^{a+k}} \frac{\log p}{p} \right|
\ll \lambda_2 e^{-a} e^{-k}D e^{a+k}
=\lambda_2 D
\end{align*}

\noindent and similarly
\begin{align*}
\mathrm{Var}\left(\sum_{e^a < \log p \le e^{a+k}} B_p\right)
& \ll \lambda_2^2\sum_{e^a < \log p \le e^{a+k}}\frac{|u_1-u_2|^2 e^{-2a} \log^2 p}{p} 
\ll \lambda_2^2D^2.
\end{align*}

\noindent Using $\mathbb{E}[\cos(\theta_p)\cos(2\theta_p)] = 0$, it is also straightforward to check that
\begin{align*}
\sum_{e^a < \log p \le e^{a+k}} \mathrm{Cov}( A_p, B_p)
& \le \lambda_1 \lambda_2\sum_{e^a < \log p \le e^{a+k}} \left[\frac{e^{-a}|u_1 - u_2| \log p}{p} + \frac{e^{-a}|u_1 - u_2| \log p}{p^2}\right]\\
&\ll \lambda_1 \lambda_2 e^{-a} |u_1 - u_2| \left( e^{a+k} + \exp(-e^a)\right)
\ll \lambda_1 \lambda_2 D.
\end{align*}

\noindent In other words, 
\begin{align*}
\left|\sum_{e^a < \log p \le e^{a+k}} \mathbb{E}[W_p]\right|
& = \left|\sum_{e^a < \log p \le e^{a+k}} \mathbb{E}[A_p + B_p]\right|
\ll \lambda_1 + \lambda_2 D,\\
\left|\sum_{e^a < \log p \le e^{a+k}} \mathrm{Var}(W_p)\right|
& = \left|\sum_{e^a < \log p \le e^{a+k}} \left(\mathrm{Var}(A_p) + \mathrm{Var}(B_p) +  2\mathrm{Cov}( A_p, B_p)\right)\right|\\
& = \frac{\lambda_1^2 k}{2} + O(\lambda_1\lambda_2 D + \lambda_2^2D^2).
\end{align*}

\noindent This verifies \eqref{eq:pf-laplace} and hence \eqref{eq:joint-tail}.
Finally, the desired maximal inequality follows from a standard chaining argument,
and we refer the reader to the proof of \cite[Lemma B.2]{hamdan2026partialsumsrandommultiplicative} for details
(unlike the Steinhaus case, we have to work with general intervals because our function $u \mapsto S_k(u)$ is not translation invariant in distribution,
but the admissibility of $(a, N, J)$ allows us to derive analogous estimates with the required uniformity).
\end{proof}

The arguments in Section \ref{sec:proofProp3.1} make use of a ballot theorem (Lemma \ref{lem:gaussian_ballot}) and an analogous statement for the partial sums $S_k^{(a)}(u)$ (Lemma \ref{lem:tailS_withgoodevent}). In order to state these results, we introduce the following notation. Fix a sufficiently large constant $r_0>0$. For any $A \ge 1$, write $r_A := \max(r_0, \lceil A/4 \rceil)$, and define the barriers
\begin{align}\label{eq:barriers_def}
U_A(j) := A + j + 2 \log\big(1+(j\land N-j)\big),
\quad L_A(j) := A - 20j,
\end{align}
for $j\leq N$, where $(a\land b):=\min(a,b)$.

\begin{lemma}[Gaussian ballot estimate]\label{lem:gaussian_ballot}
Let $G_j := \sum_{\ell=1}^j N_\ell$,  where $\{N_j\}_j$ is a sequence of i.i.d., centred Gaussian random variables with variance $1/2$.
Then there exists an absolute constant $C>0$ such that
\begin{align*}
\mathbb{P}(|G_N-w|\leq 3, G_j \in [L_A(j) - 3, U_A(j) + 3] 
~ \forall r_A \le j \le N) \\\ll \frac{(A+1) (U_A(N) - w + C)}{N^{3/2}} e^{-w^2 / N}
\end{align*}

\noindent uniformly for $1 \le A \le N/(100 \log (N+2))$ and $w \in [N/4, U_A(N) + 3]$.
\end{lemma}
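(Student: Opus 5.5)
The estimate is a Gaussian ballot bound, and I would prove it by reducing it to the classical ballot theorem for a Brownian bridge, after first conditioning on the path at time $r_A$ and then on the endpoint. The expected answer factors as $(A+1)/\sqrt{N}$ for survival near the start, times $(U_A(N)-w+C)/N$ for survival near the end given the endpoint, times the Gaussian density $N^{-1/2}e^{-w^2/N}$ of $G_N$ at $w$.

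\emph{Step 1: drop the lower barrier.} The lower barrier $L_A(j)-3$ only helps through the starting point, so I would discard it for $j>r_A$. I would keep only the upper constraint $G_j\le U_A(j)+3$ for $r_A\le j\le N$, together with a starting-point constraint at $j=r_A$. There $G_{r_A}\in[L_A(r_A)-3,U_A(r_A)+3]$, so $|G_{r_A}|\ll A+r_0$.

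\emph{Step 2: condition on $G_{r_A}=x$ and $G_N=w'$.} Here $|w'-w|\le3$. Given these, $(G_j)_{r_A\le j\le N}$ is a Gaussian random walk bridge of length $M:=N-r_A\asymp N$, since $A\le N/(100\log N)$. The upper barrier $U_A(j)+3$ is a linear function $A+j+3$ plus the concave perturbation $2\log(1+\min(j,N-j))$. I would subtract the line joining $(r_A,U_A(r_A)+3)$ and $(N,U_A(N)+3)$, since a bridge minus a linear function is still a bridge. The bridge then starts at height $-h_1$, where $h_1:=U_A(r_A)+3-x\ll A+1$, and ends at height $-h_2$, where $h_2:=U_A(N)+3-w'\le U_A(N)-w+C$. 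It must stay below a barrier that deviates from $0$ by at most $O(\log(1+\min(j,N-j)))$. For this I would invoke the standard ballot estimate for random walk bridges with logarithmic barrier perturbations, as in Bramson or in Arguin--Bovier--Kistler: the probability is $\ll (h_1+1)(h_2+1)/M$. The restriction $w\ge N/4$ ensures the endpoint slope is bounded, so the Gaussian steps after the linear shift still have uniformly bounded tilt. This keeps the estimate uniform.

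\emph{Step 3: integrate.} The joint density of $(G_{r_A},G_N)$ at $(x,w')$ is at most $\ll N^{-1/2}e^{-(w'-x)^2/(N-r_A)}$ times the marginal density of $G_{r_A}$. I would integrate over $x$ in a window of size $O(A+r_0)$ against the marginal of $G_{r_A}$, whose variance is $r_A/2\asymp A$, and over $w'$ in a window of size $6$. The factor $e^{-(w'-x)^2/(N-r_A)}$ must be compared to $e^{-w^2/N}$. Since $|x|\ll A$ and $w\le 2N$, the error in the exponent is $O(A\,w/N + A^2/N + r_Aw^2/N^2)$. The term $r_Aw^2/N^2$ is $O(A)$, which is a problem, because that loss cannot be absorbed directly.

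The main obstacle is this exponent comparison, since $r_A\asymp A$ may be large. I would resolve it by using the mean shift: large $x$ is penalized by the density of $G_{r_A}$, which is $e^{-x^2/r_A}$. The combination $e^{-x^2/r_A}e^{-(w-x)^2/(N-r_A)}$ integrates in $x$ exactly to $\sqrt{\cdot}\,e^{-w^2/N}$, because convolution of Gaussians is Gaussian. The issue is that the $x$-integral carries the weight $(h_1+1)$. The typical value is $x\approx wr_A/N\approx r_A/4$ to $2r_A$, which lies inside the window since $U_A(r_A)=A+r_A+\dots$ and $r_A\approx A/4$. There $h_1\ll A+1$. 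So the Gaussian convolution identity gives the factor $(A+1)N^{-1/2}e^{-w^2/N}$ up to the required $1/N$ from the ballot step. This yields the claimed bound $\ll (A+1)(U_A(N)-w+C)N^{-3/2}e^{-w^2/N}$.
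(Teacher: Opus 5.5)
Your proposal is correct and follows the paper's proof. Both condition on $G_{r_A}=x$ in the window and apply a ballot estimate with a logarithmically raised upper barrier; the paper uses \cite[Proposition 5]{ArguinBourgadeRadziwillI} for the walk with endpoint in a unit interval, after smoothing $U_A$ to $A+j+2\log(1+j(N-j)/N)$, rather than a bridge estimate. Both then bound the starting factor uniformly by $U_A(r_A)-L_A(r_A)+O(1)\ll A+1$ and integrate the Gaussian convolution $\int e^{-x^2/r_A-(u-x)^2/(N-r_A)}\,dx$ to recover $e^{-w^2/N}$. Two remarks in your Step 3 are unnecessary: since $h_1\ll A+1$ for every $x$ in the window, you need no appeal to a typical value of $x$, and there is no tilt issue because a Gaussian walk bridge minus a line is drift-free.
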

\begin{proof}
By a union bound, it suffices to prove the bound for
\begin{align}\label{eq:gaussianballot_local}
&\mathbb{P}\big(G_N \in [u, u+1),
~ \text{and} ~\, G_j \in [L_A(j) - 3, U_A(j) + 3] 
\quad \forall r_A \le j \le N\big)
\end{align}
uniformly in $u\in \{w-3, ...,w+2\}$, and this is a 
direct consequence of \cite[Proposition 5]{ArguinBourgadeRadziwillI}. In order to apply this proposition, we first  condition on the value of $G_{r_A}$ to express the probability in \eqref{eq:gaussianballot_local} as
\begin{align}\label{eq:ballot_gaussian_integral}
    \int_{L_A(r_A)-3}^{U_A(r_A)+3} \mathbb{P}\big(\substack{ W_{s}\in [L_A(r_A+s)-3-x, U_A(r_A+s)+3-x], \forall  0\leq s\leq N-r_A\\W_{N-r_A}\in [u-x,u-x+1)\\}\big)\frac{e^{-x^2/r_A}}{\sqrt{\pi r_A}}\mathrm{d}x
\end{align}
where $W_s:= G_{s+r_A}-G_{r_A}$. We also redefine $U_A$ to be the function 
\[
    U_A(j)=A+j+2\log\Big(1+\frac{j(N-j)}{N}\Big)=: A+j +\psi(j-r_A),
\]
which has the advantage of being twice differentiable at $j=N/2$; this replacement is without loss of generality, as this definition only differs from \eqref{eq:barriers_def} by a constant which can be absorbed into the constant $C>0$ in the lemma's statement. For each $x$ in \eqref{eq:ballot_gaussian_integral},  \cite[Proposition 5]{ArguinBourgadeRadziwillI} then applies to the integrand, using 
\begin{align}
        &g(s)=\psi(s)-\psi(0)\Big(1-\frac{s}{N-r_A}\Big),\,\alpha=1-\frac{\psi(0)}{N-r_A},\,\text{ and } y=U_A(r_A)+3-x.
\end{align}
It follows that \eqref{eq:ballot_gaussian_integral} is
\begin{align*}
        &\ll \int_{L_A(r_A)-3}^{U_A(r_A)+3}\frac{(U_A(r_A)+4-x)(U_A(N)+4-u)}{(N-r_A)^{3/2}}\frac{e^{-\frac{(u-x)^2}{N-r_A}-\frac{x^2}{r_A}}}{\sqrt{ r_A}}\mathrm{d}x \\
        &\ll \frac{(U_A(r_A)-L_A(r_A)+7)(U_A(N)+4-u)}{N^{3/2}}\int_{\mathbb{R}}\frac{e^{-\frac{(u-x)^2}{N-r_A}-\frac{x^2}{r_A}}}{\sqrt{ r_A}}\mathrm{d}x\\
        &\ll \frac{(A+1) (U_A(N) - u + C)}{N^{3/2}} e^{-u^2 / N}.
\end{align*}
\end{proof}

\begin{lemma}[Ballot theorem for $S_k^{(a)}(u)$] \label{lem:tailS_withgoodevent}
Let $(a, N, J)$ be admissible, $1 \le A \le\frac{N}{100\log(N+2)}$, 
and $\mathcal{G}_A:= \{u \in J: S_k^{(a)}(u) \in [L_A(k), U_A(k)] \quad \forall r_A \le k \le N\}$.
Then there exists some $C \in (0, \infty)$ independent of $N$ such that 
\begin{align}
\mathbb{P}(S_N^{(a)}(u) >w, u \in \mathcal{G}_A) \ll \frac{(A+1) (U_A(N) - w + C)}{N^{3/2}} e^{-w^2 / N}
\end{align}

\noindent uniformly in $u \in J$ and $w \in [N/4, U_A(N)]$.
\end{lemma}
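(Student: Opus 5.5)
The plan is to transfer the Gaussian ballot estimate of Lemma \ref{lem:gaussian_ballot} to the random walk $k \mapsto S_k^{(a)}(u)$ for a fixed $u \in J$. I would compare the walk to the Gaussian walk step by step, using the local comparison \eqref{eq:gapprox-2} of Lemma \ref{lem:gaussian_approx}. Since $u$ is fixed, the increments $Y_1^{(a)}(u),\dots,Y_N^{(a)}(u)$ are independent, because they involve disjoint sets of primes. This means the joint law of the path factorises.

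\textbf{Step 1: discretise the path.} With $\delta_k=(k+1)^{-4}$, I partition the range of each increment $Y_k$ into intervals $[v_k,v_k+\delta_k]$ with $v_k\in\delta_k\mathbb Z$. On the ballot event the walk stays in $[L_A(k),U_A(k)]$, so $|S_k|\le 21k+A \ll k$ for $k\ge r_A$.

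The increments for $k<r_A$ are not constrained by the barrier. I handle them in one of two ways:
\begin{itemize}
\item condition on $S_{r_A}$ and use Lemma \ref{lem:local_density} for its density, which is comparable to that of $G_{r_A}$; or
\item truncate each early increment to $|Y_k|\le Ck$, using the exponential tail from \eqref{eq:cumulant-Y}.
\end{itemize}
For $k\ge r_A$, increments with $|Y_k|>Ck$ force an exit from the barrier. The exceptional event $\{\exists k: |Y_k|>Ck\}$ has probability $\ll\sum_k e^{-ck^2}$ by the cumulant bound, and I must show it is small relative to the main term. To do so I refine the truncation to $|Y_k|\le C(1+\log k)$, say, with the cost absorbed by the Gaussian-type tails. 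More precisely, I union bound over the first large step and reuse the argument on the remaining path.

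\textbf{Step 2: compare cell by cell.} For each discretised path $(v_1,\dots,v_N)$ compatible with the event, independence and \eqref{eq:gapprox-2} give
\[
\mathbb P\Big(Y_k\in[v_k,v_k+\delta_k]\ \forall k\Big)\le \prod_k(1+\eta_k)\,\mathbb P\Big(N_k\in[v_k,v_k+\delta_k]\ \forall k\Big).
\]
Since $\eta_k\ll k^2e^{-k}$ is summable, $\prod_k(1+\eta_k)\ll1$.

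Being in the cells changes each partial sum $S_j$ by at most $\sum_k\delta_k\le 1$ relative to $\sum_{k\le j} v_k$. The same holds for the Gaussian walk $G_j$. Therefore if $S$ satisfies the barrier and endpoint conditions, any Gaussian path in the same cells satisfies the relaxed conditions $G_j\in[L_A(j)-3,U_A(j)+3]$ and $G_N\ge w-2$. Summing over cells bounds $\mathbb P(S_N>w,u\in\mathcal G_A)$ by a constant times
\[
\mathbb P\big(G_N\in[w-2,U_A(N)+3],\ G_j\in[L_A(j)-3,U_A(j)+3]\ \forall r_A\le j\le N\big).
\]

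\textbf{Step 3: apply the Gaussian ballot bound.} I split the endpoint range $[w-2,U_A(N)+3]$ into unit windows centred at $w'=w+m$ and apply Lemma \ref{lem:gaussian_ballot} to each window. Window $m$ contributes
\[
\frac{(A+1)(U_A(N)-w-m+C)}{N^{3/2}}\,e^{-(w+m)^2/N}.
\]
Because $w\ge N/4$, the factor $e^{-(w+m)^2/N}\le e^{-w^2/N}e^{-m/2}$ decays geometrically in $m$, so the sum over $m$ is of the claimed order. The window around $w-2$ lies slightly outside $[N/4,U_A(N)+3]$ when $w$ is near $N/4$; I fix this by shrinking the constant or accepting the slack in $C$.

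The main obstacle is Step 1, the control of atypically large increments and of the initial segment $k<r_A$. There \eqref{eq:gapprox-2} is only available for $|v|\le Ck$ and $k$ large, and errors must stay multiplicative relative to a main term that is as small as $N^{-3/2}e^{-w^2/N}$. My first approach is the exponential tilt from the proof of Lemma \ref{lem:local_density}: tilt by $\lambda=2w/N$, under which the increments have mean about $w/N\asymp1$ and Gaussian comparison holds uniformly. I would then run the cell comparison under the tilted measure and undo the tilt.
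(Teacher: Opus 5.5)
This is essentially the paper's argument: a single cell for $S_{r_A}$ controlled by Lemma \ref{lem:local_density} (your first option), cells of width $\delta_k$ for $Y_k$ with $k>r_A$ controlled by \eqref{eq:gapprox-2}, independence and $\prod_k(1+\eta_k)\ll 1$ to pass to the Gaussian walk with barriers relaxed by $3$, and then Lemma \ref{lem:gaussian_ballot} on unit endpoint windows summed using $w\geq N/4$. The ``main obstacle'' you identify does not arise: on any non-empty cell the barriers at consecutive times already force $|v_k|\le 100k$ for $k>r_A$, and $A\le 4r_A$ forces $|v_{r_A}|\le 100r_A$, so you need no exceptional-event bound (an additive bound like $\sum_k e^{-ck^2}$ could never be small relative to $N^{-3/2}e^{-w^2/N}$ anyway), no $C(1+\log k)$ truncation, and no further tilting.
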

\begin{proof}
    This follows from the proof of \cite[Lemma 2.2]{hamdan2026partialsumsrandommultiplicative} (itself inspired by \cite[Section 7]{ArguinBourgadeRadziwillI}), substituting the Gaussian estimates used therein with Lemmas \ref{lem:cumulant} and \ref{lem:local_density}. We give the broad strokes. 

    We will proceed by subdividing the event $\{S_N^{(a)}(u)>w, u\in \mathcal{G}_A\}$ into disjoint events of the form $\{(S_k^{(a)}-S_{k-1}^{(a)})(u)\in [v,v+\delta_k)\}$, for an appropriate choice of mesh sizes $(\delta_k)_k$. The probability of each of these events can then be compared to a Gaussian analogue via Lemmas \ref{lem:gaussian_approx} and \ref{lem:local_density}, from which one recovers the probability in Lemma \ref{lem:gaussian_ballot}, and concludes the claim by the same lemma. 

    This will require some additional notation. Let $\delta_k:= (1+k)^{-4}$ for each $r_A\leq k\leq N$, and let $\mathcal{T}\subseteq \mathbb{R}^{N-r_A+1}$ be the set of all tuples $(v_k)_k\subseteq \delta_k \mathbb{Z}$. In what follows, we will also omit the dependence on $a$ and $u$, writing $S_k=S_k^{(a)}(u)$ for each $k$ (and similarly for $Y_k$). The event $\big\{S_N\in [j,j+1), u\in \mathcal{G}_A\big\}$ is then clearly contained in
    \[
        \bigcup_{(v_k)_k\in\mathcal{T}}\Big( \big\{S_N\in [j,j+1), u\in \mathcal{G}_A\big\}\cap \big\{S_{r_A}\in[v_{r_A},v_{r_A}+\delta_{r_A}), Y_k\in [v_k,v_k+\delta_k) ~\forall r_A<k\leq N\big\}\Big),
    \]
    and, using the fact that $\sum_k \delta_k <1$, one straightforwardly checks that any non-empty element of the union above must necessarily be associated to a tuple $(v_k)_k$ satisfying
\begin{gather}\label{eq:constraints_vk}
        \forall r_A\leq \ell\leq N,\quad L_A(\ell)-2\leq \sum_{k=r_A}^\ell v_k\leq U_A(\ell),  \\
    \Big|j-\sum_{k=r_A}^Nv_k\Big|\leq 2
\end{gather}
as well as $|v_k|\leq 100k$ for all $r_A\leq k\leq N$. Letting $\mathcal{T}'(j)\subseteq \mathcal{T}$ denote the subset of tuples in $\mathcal{T}$ satisfying these constraints, it then follows by a union bound that
\[
    \mathbb{P}\big(S_N\in [j,j+1), u\in \mathcal{G}_A\big)\leq \!\!\sum_{(v_k)_k\in \mathcal{T}'(j)}\!\! \mathbb{P}\big(S_{r_A}\in [v_{r_A},v_{r_A}+\delta_{r_A})\big)\!\!\prod_{k=r_A+1}^N\mathbb{P}\big(Y_k\in [v_k,v_k+\delta_k)\big).
\]
Applying Lemmas \ref{lem:gaussian_approx} and \ref{lem:local_density} (the latter with $C=100$) to each summand shows that the above is
\begin{align}\label{eq:mainproblemma_finalsum}
        \ll \sum_{(v_k)_k\in \mathcal{T}'(j)}\mathbb{P}\big(G_{r_A}\in [v_{r_A},v_{r_A}+\delta_{r_A})\big)\prod_{k=r_A+1}^N(1+\eta_k)\mathbb{P}\big(N_k\in [v_k,v_k+\delta_k)\big),
\end{align}
where $G_j=\sum_{\ell=1}^j N_\ell$ is a Gaussian random walk with centred, i.i.d.\ increments having variance $1/2$, and $\eta_k\ll k^2e^{-k}$. In particular, the product $\prod_k (1+\eta_k)$ can be absorbed into the implicit constant. 

To conclude, note that for any $(v_k)_k\in \mathcal{T}'(j)$, the constraints in \eqref{eq:constraints_vk} imply that
\begin{align*}
    &\{G_{r_A}\in [v_{r_A},v_{r_A}+\delta_{r_A}), \text{ and }N_k\in [v_k,v_k+\delta_k)\forall r_A< k\leq N\} \\
    &\subseteq \{|G_N-j|\leq 3, \text{ and }\forall r_A\leq k\leq N, \,L_A(k)-3\leq G_k\leq U_A(k)+3\}.
\end{align*} 
The events on the left-hand side being disjoint for different tuples $(v_k)_k$, we can therefore bound the sum in \eqref{eq:mainproblemma_finalsum} by
\[
    \mathbb{P}(|G_N-j|\leq 3, \text{ and }\forall r_A\leq k\leq N, \,L_A(k)-3\leq G_k\leq U_A(k)+3)
\]
which by Lemma \ref{lem:gaussian_ballot}, gives
\[
    \mathbb{P}(S_N\in [j,j+1), u \in \mathcal{G}_A) \ll \frac{(A+1) (U_A(N) - j + C)}{N^{3/2}} e^{-j^2 / N}
\]
uniformly in $j\in [w,  U_A(N)]\cap \mathbb{Z}$. Recalling that $w\geq N/4$ by assumption, a union bound over all such $j$ gives
\begin{align*}
    \mathbb{P}(S_N>w, u \in \mathcal{G}_A) &\ll \sum_{j\in [w,  U_A(N)]\cap \mathbb{Z}} \frac{(A+1) (U_A(N) - j + C)}{N^{3/2}} e^{-j^2 / N}\\
    &\ll \frac{(A+1)}{N^{3/2}}e^{-w^2/N}\sum_{j-w\geq 0} (U_A(N) - w- (j-w)+ C) e^{-(j-w)/2},\\
    &\ll \frac{(A+1) (U_A(N) - w + C')}{N^{3/2}} e^{-w^2 / N},
\end{align*}
for a constant $C'>0$ depending on $C$.
\end{proof}

\section{Proof of Proposition \ref{prop:random-mass-bound}}\label{sec:proofProp3.1}

\begin{lemma}\label{lem:ST-euler-mom-localised}
Let $(a, N, J)$ be admissible, and assume further that $|J|<C$ for some constant $C>0$.
Uniformly for $q \in [2/3, 1]$, we have
\begin{align*}
\mathbb{E}\left[ \left(\int_J \left| F_{(\exp_2(a), \exp_2(a+N)]}^{\mathrm{rand}} \left(\frac{1}{2} + ie^{-a}u\right)\right|^2 du\right)^q\right]
\ll_C \left(\frac{e^N}{1 + (1-q)\sqrt{N}}\right)^q.
\end{align*}

\end{lemma}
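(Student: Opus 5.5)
We follow Harper's barrier argument, in the form of \cite{GW2025} and \cite{hamdan2026partialsumsrandommultiplicative}. Write $F(u):=F_{(\exp_2(a),\exp_2(a+N)]}^{\mathrm{rand}}(\tfrac12+ie^{-a}u)$. The first step is to compare $\log|F(u)|$ with $S_N^{(a)}(u)$ using \eqref{eq:taylor-logep}. The terms with $k\ge3$ and the prime squares in $k=2$ contribute $O(1)$ uniformly, so $|F(u)|^2\asymp \exp(2S_N^{(a)}(u))$ up to a random factor with bounded moments. More precisely, we keep $|F|^2$ itself and use $S_k$ only to define events.

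Since $q\le1$, the case $1-q\le N^{-1/2}$ follows from Jensen's inequality and Lemma~\ref{lem:ST-euler-product-2ndmom}, which give $\mathbb{E}\int_J|F|^2\ll_C e^N$. We may therefore assume $(1-q)\sqrt N\ge1$.

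For the main case, define the good event $\mathcal{G}_A$ of Lemma~\ref{lem:tailS_withgoodevent}, namely $S_k(u)\in[L_A(k),U_A(k)]$ for $r_A\le k\le N$. Then split
\[
\mathbb{E}\Big(\int_J|F|^2\Big)^q\le \mathbb{E}\Big(\int_{J}|F|^2\mathbf 1_{u\in\mathcal G_A}du\Big)^q+\sum_{\text{bad}}\mathbb{E}\Big(\int_{J}|F|^2\mathbf 1_{\text{bad}}du\Big)^q,
\]
using subadditivity of $x\mapsto x^q$.

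\textbf{Bad parts.} The bad parts come in two kinds: the first time $k$ at which $S_k$ exceeds $U_A(k)$, and the first time it drops below $L_A(k)$. For the upper barrier we partition $J$ into intervals $I$ of length $e^{-k}$. On each $I$ we apply Hölder to pass to expectations of $|F|^2$ restricted to the event $\max_{I}S_k>U_A(k)$. This is controlled by a tilted version of Lemma~\ref{lem:maxbound}, obtained by changing measure with weight $|F_{\le k}|^2$, which shifts the walk by drift roughly $k$; the remaining tail part is handled by Lemma~\ref{lem:ST-euler-product-2ndmom}. The result is a gain of roughly $e^{-(A+2\log k)}$ per level, and summing over $k$ and over $A$ with weights gives a contribution $\ll (e^N/\sqrt N\,(1-q))^q$. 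The lower barrier is cheap because $e^{2S_k}$ is tiny there.

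\textbf{Good part.} On the good part, apply Jensen with exponent $q$ after conditioning, or Hölder: $\mathbb{E}(X)^q\le(\mathbb{E}X)^q$. We need
\[
\mathbb{E}\int_J|F(u)|^2\mathbf 1_{\mathcal G_A}(u)du\ll \frac{A\,e^N}{\sqrt N}.
\]
To get this, tilt by $|F|^2$: under the measure with density $|F(u)|^2/\mathbb{E}|F(u)|^2$, the increments $Y_k$ acquire drift $\approx1$, and the event becomes a random walk with drift staying below $U_A$. Equivalently, the untilted walk must end near $N$ while staying below $A+k+2\log(\cdot)$, which by Lemma~\ref{lem:tailS_withgoodevent} (summing over $w$ with weight $e^{2w}$ and using $e^{2w-w^2/N}\approx e^{N}$ near $w\approx N$) gives $\ll (A+1)e^N/\sqrt N$. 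The polynomial factor $(U_A(N)-w+C)/N^{3/2}$ integrated against the Gaussian window of width $\sqrt N$ yields $N^{-1/2}$. Then choose $A\asymp 1/(1-q)$ (valid since $1/(1-q)\le\sqrt N\le N/(100\log N)$), balancing the $A$-loss on the good part against the $e^{-cA(1-q)}$-type gains from the $q$-th power on the bad parts. This yields the $(1+(1-q)\sqrt N)^{-q}$ factor.

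\textbf{Main obstacle.} The hardest step is making the tilting rigorous. Lemmas~\ref{lem:tailS_withgoodevent} and \ref{lem:maxbound} are stated for the untilted walk, but we need them weighted by $|F(u)|^2$, which is not exactly $e^{2S_N}$ because of the $O(p^{-1})$ corrections and the non-Gaussian increments. I would first try to redo the cumulant computation of Lemma~\ref{lem:cumulant} for the Sato--Tate measure reweighted by $|1-e^{i\theta}p^{-s}|^{-2}|1-e^{-i\theta}p^{-s}|^{-2}$ at each prime. Under that reweighting the $\xi_p$ stay independent with mean $\approx \cos^2(\cdot)/p$, and admissibility keeps the oscillatory terms negligible. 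The local density and ballot lemmas then apply verbatim to the tilted walk, now centred by subtracting $k$.
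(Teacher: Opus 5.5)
Your good part matches the paper: Fubini over the value of $S_N^{(a)}(u)$ plus Lemma~\ref{lem:tailS_withgoodevent} gives $\mathbb{E}\int_J e^{2S_N^{(a)}(u)}\mathbf{1}_{u\in\mathcal{G}_A}\,du\ll_C Ae^N/\sqrt N$. The ``main obstacle'' you name is not one. The difference $\log|F(u)|-S_N^{(a)}(u)$ consists only of the $k\ge 3$ terms in \eqref{eq:taylor-logep}; the $k=2$ terms are already inside $\xi_p$, and they are not deterministically $O(1)$. The $k\ge3$ terms are bounded deterministically by $\sum_p\sum_{k\ge 3}2/(kp^{k/2})=O(1)$. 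Hence $|F(u)|^2\ll e^{2S_N^{(a)}(u)}$ pointwise, and you can work with $Z_N:=\int_J e^{2S_N^{(a)}(u)}\,du$ with no change of measure anywhere.

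\textbf{The gap: the upper-barrier bad part.}

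\emph{Your tilted bound is false.} Under the $|F|^2$-tilt, $S_k(u)-k$ is an approximately centred walk of variance about $k/2$. So the tilted probability of $\max_I S_k>k+A+2\log(\cdot)$ is about $\exp(-(A+2\log k)^2/k)$, not $e^{-(A+2\log k)}$, and it is $\asymp 1$ once $k\gg(A+\log N)^2$. In fact $\mathbb{E}\int_J e^{2S_N^{(a)}}\mathbf{1}_{u\notin\mathcal{G}_A}\,du\gg|J|e^N$ for $A\le c\sqrt N$. The bad points carry essentially all of the first moment, so no first-moment bound on the bad mass, tilted or not, can give a gain.

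\emph{Per-interval H\"older also fails.} Summing $\mathbb{P}(E_{k,I})^{1-q}(\mathbb{E}\int_I|F|^2)^q$ over the $\asymp e^k$ intervals gives about $e^{qN}e^{-2(1-q)A}(1+k\wedge(N-k))^{-4(1-q)}$ per level $k$. Summed over $k$, this is of size $N^{1-4(1-q)}e^{qN}$ when $1-q$ is small.

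\emph{What is missing.} You need the \emph{untilted} probability of the single global event $E_A:=\{\exists u\in J\setminus\mathcal{G}_A\}$. The union bound over $k$ and over the intervals must be taken inside the probability. There Lemma~\ref{lem:maxbound} gives $e^ke^{-U_A(k)^2/k}\le e^{-2A}(1+k\wedge(N-k))^{-4}$, so $\mathbb{P}(E_A)\ll e^{-2A}$. H\"older is then applied once: $\mathbb{E}[\mathbf{1}_{E_A}Z^q]\le\mathbb{P}(E_A)^{1-q}(\mathbb{E}Z)^q$.

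\emph{A single split is not enough.} Splitting once at $A\asymp 1/(1-q)$ leaves a bad part of size $e^{-O(1)}e^{qN}$. You need the iterated decomposition
\[
Z_N\le Z_N(A_1)+\sum_{m\le M}\mathbf{1}_{E_{A_m}}Z_N(A_{m+1})+\mathbf{1}_{E_{A_{M+1}}}Z_N,
\]
with $A_m=m/(1-q)$ and $Z_N(A):=\mathbf{1}_{J\subseteq\mathcal{G}_A}Z_N$. Each bad piece is then still restricted to the next good event, whose first moment is $\ll A_{m+1}e^N/\sqrt N$. This yields $e^{qN}((1-q)\sqrt N)^{-q}\sum_m (m+1)^q e^{-2m}$, which is the claimed bound.
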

\begin{proof}
    For each $A>0$, let $\mathcal{G}_A$ be as in the statement of Lemma \ref{lem:tailS_withgoodevent} and
    \[
        Z_N(A):=\mathbf{1}_{J\subseteq \mathcal{G}_A} \int_J e^{2S_N^{(a)}(u)}\mathrm{d}u.
    \]
    Set $Z_N:=\int_J e^{2S_N^{(a)}(u)}\mathrm{d}u$, noting that
    \[
        \mathbb{E}\left[ \left(\int_J \left| F_{(\exp_2(a), \exp_2(a+N)]}^{\mathrm{rand}} \left(\frac{1}{2} + ie^{-a}u\right)\right|^2 du\right)^q\right] \ll e^{4q \sum_{p}\sum_{k\geq 3}p^{-k/2}}\mathbb{E}\big[Z_N^q\big]\ll \mathbb{E}\big[Z_N^q\big].
    \]
    Following the strategy used to prove \cite[Proposition 1.2]{hamdan2026partialsumsrandommultiplicative}, we will show that uniformly in $1\leq A\leq {3}\sqrt{N}$, $\mathbb{E}Z_N(A)\ll_C Ae^N/\sqrt{N}$ and $\mathbb{P}(\exists u\in J\setminus \mathcal{G}_A)\ll e^{-2A}$. From these two estimates, one can straightforwardly deduce that $\mathbb{E}[Z_N^q]\ll e^{qN}(1+(1-q)\sqrt{N})^{-q}$ using an interpolation argument.

    For the first claim, we use Fubini's theorem to write
    \begin{align*}
        \mathbb{E}[Z_N(A)]&\leq \mathbb{E} \int_J e^{2S_N^{(a)}(u)} \mathbf{1}(u\in \mathcal{G}_A)\mathrm{d}u \\&= 2\int_{-\infty}^{U_A(N)}\int_Je^{2V}\mathbb{P}(S_N^{(a)}(u)>V, u\in \mathcal{G}_A)\mathrm{d}V\mathrm{d}u.
    \end{align*}
    The contribution from $V\in (-\infty,N/4]$ is easily seen to be $\ll |J|e^{N/2} \ll_C e^{N/2}$, by a trivial bound on the probability in the integrand. For the remaining range, Lemma \ref{lem:tailS_withgoodevent} gives
    \begin{align*}
&\ll |J|\int_{N/4}^{U_A(N)}  \frac{e^{2V-V^2 / N}}{\sqrt{N}}\frac{(A+1) (U_A(N) - V + C)}{N}  \mathrm{d}V\\
&\ll_C A\int_{N/4}^{A+N}  \frac{e^{2V-V^2 / N}}{\sqrt{N}}\frac{(A+N - V + C)}{N}  \mathrm{d}V,
    \end{align*}
which, by the change of variables $v=({N}-V)/\sqrt{N}$ and the assumption that $A\leq 3\sqrt{N}$, is 
\begin{align*}
    \ll_C \frac{Ae^{N}}{\sqrt{N}}\int_{-A/\sqrt{N}}^{\sqrt{N}} (|v|+1)e^{-v^2}\mathrm{d}v
    & \ll \frac{Ae^N}{\sqrt{N}}.
\end{align*}

To prove the second claim, we begin with the following union bound 
\[
    \mathbb{P}\big(\exists u\in J\setminus\mathcal{G}_A\big)\leq \sum_{r_A\leq k\leq N}\bigg(\mathbb{P}\big(\exists u\in J: S_k^{(a)}(u)>U_A(k) \big)+\mathbb{P}\big(\exists u\in J: S_k^{(a)}(u)<L_A(k) \big)\bigg).
\]
For each $k$ in this sum, let $(I_\ell^{(k)})_{\ell\leq L_k}$ be a partition of $J$ into $L_k\asymp e^k$ many disjoint intervals of length $\leq e^{-k}$. Another union bound then gives
\[
    \leq \sum_{r_A\leq k\leq N} \sum_{\ell\leq L_k}\bigg(\mathbb{P}\Big(\max_{u\in I^{(k)}_\ell} \big|S_k^{(a)}(u)\big|>U_A(k) \Big)+\mathbb{P}\Big(\max_{u\in I^{(k)}_\ell} \big|S_k^{(a)}(u)\big|>-L_A(k) \Big)\bigg),
\]
and Lemma \ref{lem:maxbound} yields
\[
    \ll \sum_{r_A\leq k\leq N} e^{k}\big(e^{-U_A(k)^2/k}+e^{-L_A(k)^2/k}\big) \ll e^{-2A} \sum_{r_A\leq k\leq N}\bigg(\frac{1}{(1+(k\land N-k))^2}+e^{-399k+40A}\bigg).
\]
The remaining sum is $\ll 1$ uniformly in $1\leq A \leq 3\sqrt{N}$, since $r_A\geq A/4$.

\bigskip 

We are now equipped to show that $\mathbb{E}[Z_N^q]\ll e^{qN}(1+(1-q)\sqrt{N})^{-q}$. Note that it suffices to do so for $(1-q)\geq 1/\sqrt{N}$; the sought after bound is simply $\ll e^{Nq}$ otherwise, in which case it follows directly from Lemma \ref{lem:ST-euler-product-2ndmom} and Jensen's inequality.

Consider the sequence $(A_m)_{m\geq 1}$ defined as $A_m= m/(1-q)$, noting that $1\leq A_m\leq 3\sqrt{N}$ for each $1\leq m\leq \lceil \sqrt{N}(1-q)\rceil -1=:M$. Then we can decompose $Z_N$ iteratively as
\begin{align}\label{eq:decomposition}
    {Z}_N\leq {Z}_N(A_1)+\sum_{1\leq m\leq M} \mathbf{1}(\exists u \in J\setminus \mathcal{G}_{A_m}){Z}_N(A_{m+1})+\mathbf{1}(\exists u\in J\setminus \mathcal{G}_{A_{M+1}}){Z}_N.
\end{align}
Taking the $q$-th moment of both sides and using the subadditivity of $x\mapsto x^q$ gives
\[
    \mathbb{E}[{Z}_N^q]\leq \mathbb{E}[{Z}_N(A_1)^q]+\sum_{1\leq m\leq M} \mathbb{E}\big[\mathbf{1}(\exists u \in J\setminus \mathcal{G}_{A_m}){Z}_N(A_{m+1})^q\big]+\mathbb{E}\big[\mathbf{1}(\exists u\in J\setminus \mathcal{G}_{A_{M+1}}){Z}_N^q\big],
\]
and Hölder's inequality bounds the right-hand side by
\[
    \mathbb{E}[{Z}_N(A_1)]^q+\sum_{1\leq m\leq M} \mathbb{P}(\exists u \in J\setminus \mathcal{G}_{A_m})^{1-q}\mathbb{E}[{Z}_N(A_{m+1})]^q+\mathbb{P}(\exists u\in J\setminus \mathcal{G}_{A_{M+1}})^{1-q}\mathbb{E}[{Z}_N\big]^q.
\]
Applying the estimates we have derived to each $\mathbb{E}[Z_N(A_m)]^q$ and $\mathbb{P}(\exists u\in J\setminus \mathcal{G}_{A_m})$ and recalling that $\mathbb{E}[Z_N]^q\ll e^{Nq}$ (by Lemma \ref{lem:ST-euler-product-2ndmom}), we conclude that the above is
\[
    \ll_C e^{qN}\Big(e^{-2M}+\big(\sqrt{N}(1-q)\big)^{-q}\sum_{m\leq M} (m+1)^{q}e^{-2m}\Big) \ll_C \bigg(\frac{e^{N}}{(1-q)\sqrt{N}}\bigg)^q.
\]
\end{proof}
\begin{proof}[Proof of Proposition \ref{prop:random-mass-bound}]
For notational convenience, let $T:= \log \log y$ and $D_T(q) := 1+(1-q)\sqrt{T}$. It is sufficient to prove the estimate for $q \in [2/3, 1 - T^{-1/2}]$:
the result for smaller $q$ follows from H\"older's inequality,
whereas that for larger $q$ can be easily verified using Lemma \ref{lem:ST-euler-product-2ndmom}
because the desired bound (e.g. $q = 1$) becomes $\ll 1$.

By symmetry, it suffices to consider the integral of the Euler product on the positive real line. We first consider contributions from a neighbourhood of the origin.
Let $a_0$ be a sufficiently large integer according to Lemma \ref{lem:gaussian_approx}, and fix $a_* \ge a_0 + 10$.
By Lemma \ref{lem:ST-euler-product-2ndmom}, we have
\begin{align*}
\mathbb{E}\left[\left(\int_{0}^{e^{-(T/2 - a_* - 1)}} \left|F_{y}^{\mathrm{rand}}\Big(\frac{1}{2} + it\Big)\right|^2 dt\right)^q \right]
& \le \left(\int_{0}^{e^{-(T/2 - a_* - 1)}} \mathbb{E}\left[\left|F_{y}^{\mathrm{rand}}\Big(\frac{1}{2} + it\Big)\right|^2 \right]dt\right) ^q\\
& = \left(e^{-(T/2 - a_* - 1)} \cdot e^{T}\right)^q
\ll \left(\frac{e^T}{D_T(q)}\right)^q.
\end{align*}

Meanwhile, for any $0 \le r \le T/2 - a_*-2$, consider the factorisation
\begin{align*}
F_y^{\mathrm{rand}}(\cdot)
= F_{\exp_2(a_r)}^{\mathrm{rand}}(\cdot)
F_{(\exp_2(a_r), \exp_2(a_r + N_r)]}^{\mathrm{rand}}(\cdot)
F_{(\exp_2(a_r + N_r), y]}^{\mathrm{rand}}(\cdot)
\end{align*}

\noindent for $a_r := r+a_*$ and $N_r := \lfloor T-a_r \rfloor \asymp T$.
Since $x \mapsto x^q$ is concave, it follows from Jensen's inequality and Lemma \ref{lem:ST-euler-product-2ndmom} that 
\begin{align*}
&\mathbb{E}\left[\left(\int_{e^{-r-1}}^{e^{-r}} \left|F_{y}^{\mathrm{rand}}\Big(\frac{1}{2} + it\Big)\right|^2 dt\right)^q \right] 
\\ &\ll \mathbb{E}\left[\left(\int_{e^{-r-1}}^{e^{-r}} e^{a_r} \left|F_{(\exp_2(a_r), \exp_2(a_r + N_r)]}^{\mathrm{rand}}\left(\frac{1}{2}+it\right)\right|^2 dt\right)^q \right]\\
&= \mathbb{E}\left[\left(\int_{e^{a_*-1}}^{e^{a_*}} \left|F_{(\exp_2(a_r), \exp_2(a_r + N_r)]}^{\mathrm{rand}}\left(\frac{1}{2}+ie^{-a_r}u\right)\right|^2 du\right)^q \right]\\
& \ll \sum_{J \subset [e^{a_*-1}, e^{a_*}]} \mathbb{E}\left[\left(\int_J \left|F_{(\exp_2(a_r), \exp_2(a_r + N_r)]}^{\mathrm{rand}}\left(\frac{1}{2}+ie^{-a_r}u\right)\right|^2 du\right)^q \right]
\end{align*}

\noindent where the sum on the last line is over unit intervals of the form $J = [n, n+1]$ that intersect with $[e^{a_*-1}, e^{a_*}]$,
and there are $O(e^{a_*}) = O_{a_*}(1)$ such intervals.
We claim that $(a_r, N_r, J)$ is admissible: 
indeed this can be checked using \eqref{eq:psum-needed-random2} in Lemma \ref{lem:prime-sum} with the choice of parameters $X:= e^{a_r + k - 1}$ and $\tau = 2e^{-a_r}u$,
since $u \asymp e^{a_*}$ and the size of the prime sum is $\ll 1/(|\tau| X) \ll_{a_*} e^{-k}$.
It follows from Lemma \ref{lem:ST-euler-mom-localised} (and $N_r = T - a_r + O(1)$) that
\begin{align*}
\mathbb{E}\left[\left(\int_{e^{-r-1}}^{e^{-r}} \left|F_{y}^{\mathrm{rand}}\Big(\frac{1}{2} + it\Big)\right|^2 dt\right)^q \right] 
\ll \left(\frac{e^{T-r}}{D_T(q)}\right)^q.
\end{align*}

\noindent Using the subadditivity of $x \mapsto x^q$ again, we obtain
\begin{align*}
\mathbb{E}\left[\left(\frac{1}{\log y}\int_{0}^{1} \left|\frac{F_{y}^{\mathrm{rand}}(\frac{1}{2} + it)}{\frac{1}{2}+it}\right|^2 dt\right)^q \right]
&\ll \mathbb{E}\left[\left(e^{-T}\int_{0}^{e^{-(T/2 - a_* - 1)}} \left|F_{y}^{\mathrm{rand}}\Big(\frac{1}{2} + it\Big)\right|^2 dt\right)^q \right]\\
&  + \sum_{r=0}^{T/2 - a_*-2} \mathbb{E}\left[\left(e^{-T}\int_{e^{-r-1}}^{e^{-r}} \left|F_{y}^{\mathrm{rand}}\Big(\frac{1}{2} + it\Big)\right|^2 dt\right)^q \right]\\
&  \ll D_T(q)^{-q}.
\end{align*}

Now consider the contributions from unit intervals $[n, n+1]$ away from the origin.
Our approach here is similar but we will abuse the notation and consider the new choice of parameters
\begin{align*}
    N_n := \lfloor T - a_n \rfloor \qquad \text{with} \qquad a_n := \lceil a_* + 2 \log \log(10+n)\rceil
\end{align*}

\noindent so that $e^{a_n} \ll \log^2(10+n)$.
For $a_n \le T/2-2$, we apply Jensen's inequality as before and study
\begin{align*}
&\mathbb{E}\left[\left(\int_{n}^{n+1} \left|F_{y}^{\mathrm{rand}}\Big(\frac{1}{2} + it\Big)\right|^2 dt\right)^q \right]
\ll \mathbb{E}\left[\left(\int_{n}^{n+1} e^{a_n}\left|F_{(\exp_2(a_n), \exp_2(a_n + N_n)]}^{\mathrm{rand}}\Big(\frac{1}{2} + it\Big)\right|^2 dt\right)^q \right].
\end{align*}

\noindent If we consider a change of variable $u = e^{a_n}t$ and partition the integration domain (with respect to $u$)
into $O(e^{a_n})$ unit intervals $J$ as before, then one can check that $(a_n, N_n, J)$ is again admissible so that
\begin{align*}
\mathbb{E}\left[\left(\int_{n}^{n+1} e^{a_n}\left|F_{(\exp_2(a_n), \exp_2(a_n + N_n)]}^{\mathrm{rand}}\Big(\frac{1}{2} + it\Big)\right|^2 dt\right)^q \right]
&\ll e^{a_n} \left(\frac{e^{N_n}}{D_T(q)}\right)^q\\
\ll e^{(1-q)a_n}\left(\frac{e^{T}}{D_T(q)}\right)^q
&\ll \left(\log(10 + n)\right)^{2(1-q)} \left(\frac{e^{T}}{D_T(q)}\right)^q.
\end{align*}

As for $a_n > T/2 - 2$, we apply Jensen's inequality only and deduce the trivial bound
\begin{align*}
&\mathbb{E}\left[\left(\int_{n}^{n+1} \left|F_{y}^{\mathrm{rand}}(\frac{1}{2} + it)\right|^2 dt\right)^q \right]
\ll e^{qT}.
\end{align*}

Collecting all the terms, we have 
\begin{align*}
&\mathbb{E}\left[\left(\frac{1}{\log y}\int_{1}^{\infty} \left|\frac{F_{y}^{\mathrm{rand}}(\frac{1}{2} + it)}{\frac{1}{2} + it}\right|^2 dt\right)^q \right]
\\ & \qquad \le \sum_{\substack{n \ge 1 \\ a_n \le T/2 - 2}}n^{-2q}\mathbb{E}\left[\left(e^{-T}\int_{n}^{n+1} \left|F_{y}^{\mathrm{rand}}\Big(\frac{1}{2} + it\Big)\right|^2 dt\right)^q \right]\\
& \qquad \qquad \qquad+ \sum_{\substack{n \ge 1 \\ a_n > T/2 - 2}}n^{-2q}\mathbb{E}\left[\left(e^{-T}\int_{n}^{n+1} \left|F_{y}^{\mathrm{rand}}\Big(\frac{1}{2} + it\Big)\right|^2 dt\right)^q \right]\\
& \qquad\ll \sum_{\substack{n \ge 1 \\ a_n \le T/2 - 2}}\frac{\left(\log(10 + n)\right)^{2(1-q)}}{n^{2q}} D_T(q)^{-q}
+ \sum_{\substack{n \ge 1 \\ a_n > T/2 - 2}}n^{-2q}
\ll D_T(q)^{-q}
\end{align*}

\noindent because the sum over $a_n > T/2 - 2$ yields an error of size $\ll \exp(-O(e^{T/4})) = o(D_T(q)^{-q})$.
This concludes our proof.
\end{proof}

\section{Proof of Theorem \ref{thm:det} and Corollary \ref{cor:markov}}\label{sec:derandom}

To prove Theorem \ref{thm:det}, we adapt Harper's derandomization argument from \cite{harper2023typicalsizecharacterzeta}. This requires analogues of the even-moment estimates in \cite[Lemma 1]{harper2019partitionfunctionriemannzeta}, and of the comparison result \cite[Proposition 1]{harper2023typicalsizecharacterzeta} showing that characters behave like the (Steinhaus) random model. The proof of these analogues (Propositions \ref{prop:twistedmoments} and \ref{prop:bridge})  is complicated by the lack of complete multiplicativity and exact orthogonality of Hecke eigenvalues (see Lemma~\ref{lem:peter}).

Throughout this section, we assume $f\in \mathcal{H}_k(N)$, where $k\geq 2$ is an even integer and $N\geq 2$ is a sufficiently large prime.

\begin{proposition}[Twisted even moments] \label{prop:twistedmoments}
Given a finite set of primes $\mathcal{P}$, a non-empty subset $\mathcal{Q} \subseteq
\mathcal{P}\cup\{p^2:p\in\mathcal{P}\}$,
and a sequence of complex numbers $(a(q))_{q\in\mathcal{Q}}$, let
\begin{align*}
Q(\mathbb{X}):=
\sum_{q\in\mathcal{Q}}
\frac{a(q)\mathbb{X}(q)}{\sqrt{q}}
\end{align*}
and
\begin{gather*}
Q(f):=
\sum_{q\in\mathcal{Q}}
\frac{a(q)\lambda_f(q)}{\sqrt{q}}.
\end{gather*}
Let $x\geq 1$, and let $(c(n))_{n\leq x}$ be a sequence of complex numbers. Then for any integer $\ell \geq 0,$ we have
\begin{gather} \label{eq:randomtwisted}
\mathbb{E} |Q(\mathbb{X})|^{2\ell} \left| \sum_{n \leq x} c(n) \mathbb{X}(n) \right|^2 \ll
\left( \sum_{n \leq x} d_{\mathcal{P}}(n)|c(n)|^2 \right) \ell ! \left(26 \sum_{q \in \mathcal{Q}}  \frac{|a(q)|^2}{q}  \right)^{\ell},
\end{gather}
where $d_{\mathcal{P}}(n):=\sum_{\substack{d | n\\p | d \implies p \in \mathcal{P}}} 1.$
In particular, we have
\begin{align*}
\mathbb{E} |Q(\mathbb{X})|^{2\ell} \ll
 \ell ! \left(26 \sum_{q \in \mathcal{Q}}  \frac{|a(q)|^2}{q}  \right)^{\ell}.
\end{align*}

Moreover, let $U:=\max_{q \in \mathcal{Q}} q $. If $xU^{\ell} < N_k,$ then
\begin{gather} \label{eq:dettwisted}
\mathbb{E}^h_{f \in \mathcal{H}_k(N)} |Q(f)|^{2\ell} \left| \sum_{n \leq x} c(n) \lambda_f(n) \right|^2 \ll_{k}
\left( \sum_{n \leq x} d_{\mathcal{P}}(n)|c(n)|^2 \right) \ell ! \left(26 \sum_{q \in \mathcal{Q}}  \frac{|a(q)|^2}{q}  \right)^{\ell}.
\end{gather}
In particular, if $U^{\ell}<N_k,$ then
\begin{align*}
\mathbb{E}^h_{f \in \mathcal{H}_k(N)} |Q(f)|^{2\ell} \ll_{k} \ell ! \left(26 \sum_{q \in \mathcal{Q}}  \frac{|a(q)|^2}{q} \right)^{\ell}.
\end{align*}

\end{proposition}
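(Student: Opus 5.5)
We expand $Q(\mathbb{X})^{\ell}$ multiplicatively, in the spirit of \cite[Lemma 1]{harper2019partitionfunctionriemannzeta}, and then apply the orthogonality relations. The Hecke recursion (Lemma \ref{lem:recursion}) replaces complete multiplicativity, and the bilinear estimate (Lemma \ref{lem:largesieve}) replaces exact orthogonality in the deterministic case.

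\emph{Step 1: the random case.} Write
\begin{align*}
Q(\mathbb{X})^\ell=\sum_{q_1,\dots,q_\ell\in\mathcal{Q}}\frac{a(q_1)\cdots a(q_\ell)}{\sqrt{q_1\cdots q_\ell}}\,\mathbb{X}(q_1)\cdots\mathbb{X}(q_\ell).
\end{align*}
Iterating \eqref{eq:randomhecke} expresses $\mathbb{X}(q_1)\cdots\mathbb{X}(q_\ell)$ as a sum $\sum_m b_{\mathbf q}(m)\mathbb{X}(m)$. Here $m$ ranges over $\mathcal P$-smooth divisors of $q_1\cdots q_\ell$ with the same squareclass structure, and the $b_{\mathbf q}(m)\ge 0$ are integers.

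Because $\mathbb{X}$ is real-valued, $Q(\mathbb{X})^\ell\sum_n c(n)\mathbb{X}(n)$ becomes $\sum_{r}\gamma(r)\mathbb{X}(r)$ after one further use of \eqref{eq:randomhecke}, this time against $\mathbb{X}(n)$. Lemma \ref{lem:orthog} then gives
\begin{align*}
\mathbb{E}|Q(\mathbb{X})|^{2\ell}\Big|\sum_n c(n)\mathbb{X}(n)\Big|^2=\sum_r|\gamma(r)|^2.
\end{align*}

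We bound $\sum_r|\gamma(r)|^2$ by Cauchy--Schwarz, grouping by the prime-power profile of the tuple $\mathbf q$. Each $r$ has the form $r=n\cdot(\text{$\mathcal P$-part})/d^2$. The number of ways to recover $n$ from $r$ together with the $\mathcal P$-part accounts for the factor $d_{\mathcal P}(n)$.

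For the $\ell$-fold product, the strategy mirrors Harper's computation: terms where a prime or its square repeats are controlled by a crude combinatorial count. Separating the contributions $q=p$ and $q=p^2$, and bounding multiplicities $\prod_p \mathbb{E}|\mathbb{X}(p)|^{2j}\le C^j j!$ (Catalan-type moments of $2\cos\theta_p$), yields the bound $\ell!\,(C\sum|a(q)|^2/q)^\ell$. We then check that the constant can be taken to be $26$.

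An alternative route is to work prime by prime using independence. One notes that $\mathbb{E}|\mathbb{X}(p)|^{2j}=C_j\le 4^j$, where $C_j$ is the $j$-th Catalan number, and $\mathbb{X}(p^2)=\mathbb{X}(p)^2-1$. Expanding via the multinomial theorem over distinct primes then gives $\ell!\prod$ of the per-prime contributions, with $d_{\mathcal P}(n)$ arising from the overlap of the twist with $n$.

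\emph{Step 2: the deterministic case.} Every integer produced in Step 1 is at most $xU^\ell<N_k<N$. Hence \eqref{eq:dethecke} holds for all products we form, and the identical algebraic expansion gives
\begin{align*}
Q(f)^\ell\sum_n c(n)\lambda_f(n)=\sum_{r\le xU^\ell}\gamma(r)\lambda_f(r),
\end{align*}
with the same coefficients $\gamma(r)$. For the conjugate we use that Hecke eigenvalues at $r<N$ are real.

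Lemma \ref{lem:largesieve} with $x$ replaced by $xU^\ell\le N_k$ then gives
\begin{align*}
\mathbb{E}^h\Big|\sum_r\gamma(r)\lambda_f(r)\Big|^2=(1+O_k(1))\sum_r|\gamma(r)|^2.
\end{align*}
This holds because $xU^\ell\log(2xU^\ell)/N\ll1$ when $k=2$, which is exactly why $N_k=N/\log N$ in that case. The argument of Step 1 then finishes the proof. The ``in particular'' statements follow by taking $x=1$ and $c(1)=1$.

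\emph{Main obstacle.} The hard part is the combinatorial bound in Step 1. Unlike the Steinhaus case, the product $\mathbb{X}(p)^j$ is not orthonormal: it spreads over $\mathbb{X}(p^{j-2i})$ with Catalan-type multiplicities. In addition, the mixed terms with $q=p^2$ interact with those with $q=p$, and the twist by $n$ sharing primes with $\mathcal P$ creates further overlaps. Keeping all of this to $\ell!\,26^\ell$ rather than a worse growth requires care. I would bound each prime's multiplicity generating function by $\sum_j (Cz)^j/j!$-type estimates, comparing $\mathbb{E}|\mathbb{X}(p)|^{2j}\le 4^j$ and $|\mathbb{X}(p^2)|\le 3$, and then absorb the cross terms by $|u+v|^2\le 2|u|^2+2|v|^2$ style splittings.
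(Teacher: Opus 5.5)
Your Step 2 is sound and matches the paper. The Hecke recursion produces the same coefficients $\gamma(r)$ for $\mathbb{X}$ and for $\lambda_f$ as long as all indices stay below $xU^\ell<N_k$. So everything reduces to bounding $\sum_r|\gamma(r)|^2$ and then invoking Lemma~\ref{lem:largesieve}.

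The gap is that this bound, which is the whole content of the proposition, is never proved. Neither of your two routes delivers it.

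\emph{The independence route.} It only computes the untwisted moment $\mathbb{E}|Q(\mathbb{X})|^{2\ell}$, where the Catalan computation does give $\ell!\,C^\ell$. Once $C(\mathbb{X})=\sum_n c(n)\mathbb{X}(n)$ involves $n$ sharing primes with $\mathcal{P}$, $|C(\mathbb{X})|^2$ is no longer a product over primes, and off-diagonal terms survive. For example, with $Q=\mathbb{X}(p)$ one has $\mathbb{E}\,|Q|^2\,\mathbb{X}(1)\mathbb{X}(p^2)=\mathbb{E}\,(\mathbb{X}(p^2)+1)\mathbb{X}(p^2)=1$. So there is no factorisation into ``$\ell!\prod$ of per-prime contributions''.

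\emph{The Harper-style route.} It relies on unique factorisation $m=q_1\cdots q_\ell n$ to count preimages. Here, however, $\mathbb{X}(m)\mathbb{X}(p)=\mathbb{X}(mp)+1_{p\mid m}\mathbb{X}(m/p)$, and $\mathbb{X}(m)\mathbb{X}(p^2)$ carries an extra diagonal term $1_{p\mid m}\mathbb{X}(m)$. The ``repeated prime'' terms you propose to handle by a crude count are exactly these lowering and diagonal terms. Their number at each step is $\omega_{\mathcal P}$ of the current index, which grows along the iteration. An unweighted Cauchy--Schwarz at each multiplication by $Q$ therefore loses a factor $\max_n\omega_{\mathcal P}(n)+j$ at step $j$. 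This gives $\ell!\max_n\binom{\ell+\omega_{\mathcal P}(n)}{\ell}\sum_n|c(n)|^2$, with the divisor-type factor outside the sum rather than $d_{\mathcal P}(n)$ inside it. The constant $26$ is likewise asserted, not derived.

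The missing idea is the paper's weighted norm, which charges the loss to each index individually.
\begin{itemize}
\item Write one multiplication by $Q$ as a linear operator $\mathcal{T}=\mathcal{L}_2+\mathcal{L}_1+\mathcal{D}+\mathcal{R}_1+\mathcal{R}_2$ on coefficient sequences, with weights $\boldsymbol{\alpha}=(a(p)/\sqrt p)$ and $\boldsymbol{\beta}=(a(p^2)/p)$.
\item Measure sequences by $\|\boldsymbol c\|_s^2=\sum_n\binom{s+\omega_{\mathcal P}(n)}{s}|c(n)|^2$.
\item Cauchy--Schwarz in each piece costs a factor $t=\omega_{\mathcal P}$ of the relevant index. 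The identity $t\,b_s(t)\le(t+1)b_s(t+1)=(s+1)b_{s+1}(t)$, with $b_s(t)=\binom{s+t}{t}$, absorbs this by raising the norm index. This gives $\|\mathcal{T}\boldsymbol c\|_s\le\sqrt{s+1}\,(2\|\boldsymbol\alpha\|_2+3\|\boldsymbol\beta\|_2)\|\boldsymbol c\|_{s+1}$.
\item Applying this successively with $s=0,1,\dots,\ell-1$ gives $\sum_r|\gamma(r)|^2=\|\mathcal{T}^\ell\boldsymbol c\|_0^2\le\ell!\,(2\|\boldsymbol\alpha\|_2+3\|\boldsymbol\beta\|_2)^{2\ell}\|\boldsymbol c\|_\ell^2$.
\item Finally, $\binom{\ell+\omega_{\mathcal P}(n)}{\ell}\le 2^\ell d_{\mathcal P}(n)$ and $(2u+3v)^2\le 13(u^2+v^2)$ yield exactly $\ell!\,(26\sum_{q}|a(q)|^2/q)^\ell\sum_n d_{\mathcal P}(n)|c(n)|^2$.
\end{itemize}
Without this, or an equivalent device tracking $\omega_{\mathcal P}$ per index, your Step~1 does not establish \eqref{eq:randomtwisted}, and Step~2 has nothing to feed into.
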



\begin{proposition}[Derandomization of Sato--Tate random multiplicative functions] \label{prop:bridge}
 Given an integer $Y \geq 1,$ a sufficiently large $P,$ and sequences of complex numbers 
 \begin{align*}
  (a_{1}(p))_{p \leq P}, (a_{1}(p^2))_{p \leq P}, ..., (a_{Y}(p))_{p \leq P}, (a_{Y}(p^2))_{p \leq P}
 \end{align*}
 of modulus at most one, let
\begin{gather*}
S_i(f) :=  \sum_{\substack{p \leq P}} \Re\left(\frac{a_i(p)\lambda_f(p)}{\sqrt{p}}+\frac{a_i(p^2)(\lambda_f(p^2)-1)}{p} \right)  
\end{gather*}
and
\begin{gather*}
S_i(\mathbb{X}) := 
  \sum_{\substack{p \leq P}}  \Re
 \left(\frac{a_i(p)\mathbb{X}(p)}{\sqrt{p}}+\frac{a_i(p^2)(\mathbb{X}(p^2)-1)}{p} \right).
\end{gather*}
Given $x \geq 1,$ let $(c(n))_{n \leq x}$ be a sequence of complex numbers of modulus at most one. Let the functions $g_j,$ with associated parameters $J$ and $\delta$, be as in Lemma \ref{lem:partition}. 
Suppose $xP^{4SY}<N_k,$ where $S:=\lceil C_0 Y \delta^{-2} \log (J \log P) \rceil$ for some sufficiently large absolute constant $C_0>0.$ Then for any integers $j(1), \ldots, j(Y) \in [-J,J+1],$ we have
\begin{gather*}
\mathbb{E}^h_{f \in \mathcal{H}_k(N)}\left[\prod_{i=1}^Y
g_{j(i)} \left( S_i(f) \right) \left| \sum_{n \leq x} c(n)\lambda_f(n) \right|^2 \right]\\
= \mathbb{E} \left[\prod_{i=1}^Y
g_{j(i)} \left( S_i(\mathbb{X}) \right) 
\left| \sum_{n \leq x} c(n)\mathbb{X}(n) \right|^2\right]+O_{k} \left( \frac{x}{(J\log P)^{Y/\delta^2}} \right).
\end{gather*}
In particular, we have
\begin{align*}
\mathbb{E}^h_{f \in \mathcal{H}_k(N)} \left[\prod_{i=1}^Y
g_{j(i)} \left( S_i(f) \right) 
\right]= \mathbb{E} \left[\prod_{i=1}^Y
g_{j(i)} \left( S_i(\mathbb{X}) \right)\right] +O_{k} \left( \frac{1}{(J\log P)^{Y/\delta^2}} \right).
\end{align*}

\end{proposition}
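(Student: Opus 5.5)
We follow Harper's derandomization strategy from \cite{harper2023typicalsizecharacterzeta}, replacing each $g_{j(i)}$ by a polynomial and then comparing polynomial moments via the Hecke recursion and the Petersson formula. The plan has three steps.

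\emph{Step 1: polynomial approximation of $g$.} Since $g$ is smooth with the derivative bounds of Lemma \ref{lem:partition}, we approximate it on a large window $|x|\le W$ (with $W \asymp J + \sqrt{\log(J\log P)}$) by a trigonometric-type polynomial. Expanding $e^{i\xi x}$ by Taylor's theorem to order $S$ then gives a genuine polynomial $\tilde g$ of degree $\ll S$. The error satisfies $|g-\tilde g|\le (J\log P)^{-Y/\delta^2}$ on the window, because the Taylor tail decays once $S \gg \delta^{-2}Y\log(J\log P)$. The same approximation works for $g_{J+1}=1-\sum_{|j|\le J}g_j$.

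Outside the window we cannot use pointwise bounds. Instead we bound the contribution of the event $|S_i|>W$ using Proposition \ref{prop:twistedmoments}: take the twisted $2\ell$-th moment of $S_i$ with $\ell\asymp S$, weighted by $|\sum c(n)\lambda_f(n)|^2$, and apply Markov's inequality. This is valid on both the deterministic and random sides, since $xP^{4\ell}<N_k$ follows from the hypothesis $xP^{4SY}<N_k$. Here $S_i$ is a combination of $\lambda_f(p)/\sqrt p$ and $\lambda_f(p^2)/p$, so $\mathcal Q\subseteq\mathcal P\cup\{p^2\}$ and $U\le P^2$. The variance $\sum|a(q)|^2/q\ll\log\log P$, so the moment bound gives Gaussian-type tails $e^{-W^2/(C\log\log P)}$. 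The factor $d_{\mathcal P}(n)$ must be kept under control: either we accept $\sum_n d_{\mathcal P}(n)|c(n)|^2 \ll x\log P$ and absorb this loss into $(J\log P)^{-Y/\delta^2}$ by enlarging $C_0$, or we apply Cauchy--Schwarz first. Also, the $g_j$ are bounded by $O(1)$, so products of them are harmless.

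\emph{Step 2: reduction to polynomial moments.} After replacing each $g_{j(i)}$ by $\tilde g_{j(i)}$, the product $\prod_i\tilde g_{j(i)}(S_i)$ expands into a linear combination of monomials $\prod_i S_i^{m_i}$ with $\sum m_i\le SY$. Writing $\Re z=(z+\bar z)/2$ (the $\lambda_f$ are real), each monomial is a sum over products of $\lambda_f(p)$ and $\lambda_f(p^2)$ with total "size" at most $P^{2SY}$, times $|\sum c(n)\lambda_f(n)|^2$. We repeatedly apply the Hecke recursion \eqref{eq:dethecke}, which is valid because every argument stays below $N$. This expresses everything as $\sum_{m,n}b(m)\overline{b'(n)}\lambda_f(m)\lambda_f(n)$ with $m,n\le xP^{2SY}$. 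By \eqref{eq:randomhecke} the identical expansion holds for $\mathbb X$ with the same coefficients.

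\emph{Step 3: comparison.} Lemma \ref{lem:largesieve} compares the harmonic average with $\sum_m b(m)\overline{b'(m)}$, and Lemma \ref{lem:orthog} shows this diagonal sum equals the random expectation. The off-diagonal error is $\ll (xP^{2SY}/N)^{\min(1,k-1)}\|b\|_2\|b'\|_2$, up to a logarithm when $k=2$. The condition $xP^{4SY}<N_k$ makes this at most $P^{-2SY}$ times the coefficient norms.

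\emph{Main obstacle.} The hardest part is bounding $\|b\|_2$. The polynomial coefficients (of size roughly $(2\pi/\delta)^{S}/S!$ times $W^S$) and the Hecke-recursion multiplicities could blow up. We plan to bound the $\ell^2$ norms crudely by $x\,(CW\log P/\delta)^{O(SY)}$, using $|a_i|\le1$, $\sum_{p\le P}1\le P$, and divisor-type bounds for the recursion coefficients. We then need $P^{-2SY}$ to beat this, which holds since $P$ is large compared with $W/\delta$ (where this fails, we would shrink $J$-dependence using $\log J\ll\log\log P$-type losses absorbed by $C_0$). Finally, we sum the errors over the $O(1)$ pieces and handle normalization via \eqref{eq:n=1general} and \eqref{eq:n=1k=2}. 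The "in particular" statement is the special case $x=1$, $c(1)=1$.
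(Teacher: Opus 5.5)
Your architecture matches the paper's: polynomial approximation of the $g_{j(i)}$, linearization by the Hecke recursion, Lemma \ref{lem:largesieve} together with Lemma \ref{lem:orthog} for the polynomial moments, and Proposition \ref{prop:twistedmoments} for the approximation error. However, Step 1 has a genuine gap.

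\textbf{The approximation error outside the window is not controlled.} Steps 2--3 compare $\mathbb{E}^h\bigl[\prod_i\tilde g_{j(i)}(S_i(f))\,|C(f)|^2\bigr]$, with $C(f)=\sum_{n\le x}c(n)\lambda_f(n)$, against its random analogue over the \emph{whole} family. So you must show that $\mathbb{E}^h\bigl[\bigl|\prod_i g_{j(i)}(S_i)-\prod_i\tilde g_{j(i)}(S_i)\bigr|\,|C|^2\bigr]$ is small also on the event that some $|S_i|>W$. On that event $\prod_i\tilde g_{j(i)}(S_i)$ is a product of $Y$ polynomials of degree $\asymp S$, and it is not $O(1)$. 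Neither the boundedness of the $g_j$ nor a Markov bound for one $S_i$ at a time says anything about it. The missing ingredient is a global growth bound combined with \emph{mixed} weighted moments, and this is how the paper proceeds:
\begin{itemize}
\item It Taylor expands $h_i(t)=g_{j(i)}(t-b_i)$ to degree $2S-1$. The shift also removes the constant $b_i=\Re\sum_{p\le P}a_i(p^2)/p$, which your application of Proposition \ref{prop:twistedmoments} ignores.
\item The derivative bounds of Lemma \ref{lem:partition} give $|h_i(t)-\tilde h_i(t)|\le H_S|t|^{2S}$ for \emph{all} real $t$, with an explicit $H_S$.
\item Telescoping gives $\bigl|\prod_i h_i(Q_i)-\prod_i\tilde h_i(Q_i)\bigr|\le\sum_i H_S|Q_i|^{2S}\prod_{j<i}(1+H_S|Q_j|^{2S})$.
\item Each expanded term is bounded by Lemma \ref{lem:mixedmoments}. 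There, weighted AM--GM reduces $|C|^2\prod_i|Q_i|^{2\ell_i}$ to Proposition \ref{prop:twistedmoments}, using $xP^{2SY}<N_k$. This gives $D\,H_S^j(jS)!\,M^{jS}\ll De^{-2jS}$.
\end{itemize}
With this global remainder bound, no window is needed at all.

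\textbf{The window is also mis-sized.} Since $S_i$ has variance $\asymp\log\log P$, the best tail bound available is $\exp(-cW^2/\log\log P)$. Reaching $(J\log P)^{-Y/\delta^2}$ therefore requires $W^2\gg Y\delta^{-2}\log(J\log P)\log\log P$. With $W\asymp J+\sqrt{\log(J\log P)}$ and $J\asymp\log\log P$, as in the application, you only get $(\log P)^{-O(1)}$. On the other hand, a degree-$S$ polynomial approximates $g$ on $[-W,W]$ only if $W\ll\delta S$, so tying $W$ to $J$ breaks the approximation when $J$ is large. The relevant scale is the size of $S_i$, not where the bump $g_{j(i)}$ is centred.

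\textbf{What you call the main obstacle is the easy part.} A crude bound does suffice, but not the one you state. The honest bound is $\|b\|_2^2=\mathbb{E}|A(\mathbb{X})|^2\le x\,(C\sqrt{P})^{2U}$, coming from $|Q_i(\mathbb{X})|\ll\sqrt{P}$, not $(CW\log P/\delta)^{O(SY)}$. It survives only because the total degree is at most about $2SY$. Then the loss of roughly $P^{SY}$, after summing the Taylor coefficients with their factorials, is beaten by the off-diagonal gain $(xP^{2SY}/N)^{k-1}\ll P^{-2SY}$. The paper instead uses the sharp bound $\|\alpha\|_2^2\ll D\,U!M^U$ from Lemma \ref{lem:mixedmoments}. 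The genuinely delicate input is the Hecke $s$-norm moment bound for products in the remainder, which your Step 1 does not supply.
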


Throughout this section, we set $K:=\lfloor\log^{1.01} P\rfloor$ for convenience.

\begin{lemma} \label{lem:ctntodisc}
Let $v \in \mathbb{R}.$ Then for any sufficiently large $P$, we have
\begin{gather*}
 \mathbb{E} \sum_{|h| \leq \frac{1}{2}K} \int_{-\frac{1}{2K}}^{\frac{1}{2K}} \left|F_{P}^{\mathrm{rand}} \left(\frac{1}{2} +iv+ i\frac{h}{K} + it \right) - F_{P}^{\mathrm{rand}} \left(\frac{1}{2} +iv+ i\frac{h}{K} \right)\right|^2 dt 
 \ll \log^{0.99}P .
 \end{gather*}

 \begin{proof}
This follows from adapting the proof of \cite[Lemma 2]{harper2023typicalsizecharacterzeta} with Lemma \ref{lem:orthog}.
 \end{proof}
\end{lemma}

\begin{lemma} \label{lem:discretemultchaos}
Let $q \in [2/3,1].$ Then for any sufficiently large $P,$ we have
\begin{align*}
\sum_{v\in\mathbb{Z}} \frac{1}{(1+|v|)^{2q}} \mathbb{E} \left( \frac{1}{K} \sum_{|h| \leq K/2} \left|F_{P}^{\mathrm{rand}}\left(\frac{1}{2} +iv+ i\frac{h}{K} \right)\right|^2 \right)^q
\ll \left(\frac{\log P}{1 + (1-q)\sqrt{\log\log P}}\right)^{q}.
\end{align*}

\begin{proof}
This follows from adapting the proof of \cite[Multiplicative Chaos Result 2]{harper2023typicalsizecharacterzeta} with Lemma \ref{lem:ctntodisc}.
\end{proof}

\end{lemma}

\begin{proof}[Proof of Theorem \ref{thm:det} assuming Propositions \ref{prop:twistedmoments} \&  \ref{prop:bridge}]
We adapt the argument in \cite[Theorem 1]{harper2023typicalsizecharacterzeta}, focusing on the modifications required upon replacing Steinhaus RMFs with Sato--Tate RMFs. It suffices to consider $q\in[2/3,1]$. Indeed, for any function $Z$ on $\mathcal{H}_k(N)$ and $0\leq q\leq 2/3$, Lyapunov's inequality gives
\begin{align*}
\mathbb{E}_{f \in \mathcal{H}_k(N)}^h|Z(f)|^{2q}
\leq \left(\mathbb{E}_{f \in \mathcal{H}_k(N)}^h|Z(f)|^{4/3}\right)^{3q/2},
\end{align*}
and the desired estimate follows from the case $q=2/3$, since $1-q\asymp 1$ in this range. Following \cite[Section 3.2]{harper2023typicalsizecharacterzeta}, by H\"older's inequality, we have
\begin{align*}
\mathbb{E}_{f \in \mathcal{H}_k(N)}^h \left|\sum_{\substack{n \leq x\\P^{+}(n) \leq x^{1/\log\log x}}}\lambda_f(n) \right|^{2q} 
\leq 
\left(\mathbb{E}_{f \in \mathcal{H}_k(N)}^h \left|\sum_{\substack{n \leq x\\P^{+}(n) \leq x^{1/\log\log x}}}\lambda_f(n) \right|^{2} \right)^q.
\end{align*}
Applying Lemma \ref{lem:largesieve} instead of the orthogonality relation, this is again $\ll \Psi(x,x^{1/\log\log x})^q,$ where
\begin{align*}
\Psi(x,y):=\#\{n\leq x:P^+(n)\leq y\}.
\end{align*}
Using standard smooth
number estimates (see, e.g., \cite[Theorem 7.6]{MR2378655}), this is 
$\ll (x (\log x)^{-c\log \log \log x})^q,$ and is therefore negligible. 
It remains to show
 \begin{align*}
\mathbb{E}_{f \in \mathcal{H}_k(N)}^h \left|\sum_{\substack{n \leq x\\P^{+}(n) > x^{1/\log\log x}}}\lambda_f(n) \right|^{2q} \ll_k
\left( \frac{x}{1+(1-q)\sqrt{\log \log P}} \right)^q.
\end{align*}

Following \cite[Section 3.3]{harper2023typicalsizecharacterzeta}, set $M:=\lfloor 2\log^{1.02}P\rfloor$ for some large $P$ to be chosen later and let
\begin{gather*}
S_h(f) :=  \sum_{\substack{p \leq P}} \Re\left(\frac{\lambda_f(p)}{p^{\frac{1}{2}+i\frac{h}{K}}}+\frac{\lambda_f(p^2)-1}{2p^{1+i\frac{2h}{K}}} \right)  
\end{gather*}
and
\begin{gather*}
S_h(\mathbb{X}) :=  \sum_{\substack{p \leq P}} \Re\left(\frac{\mathbb{X}(p)}{p^{\frac{1}{2}+i\frac{h}{K}}}+\frac{\mathbb{X}(p^2)-1}{2p^{1+i\frac{2h}{K}}} \right)
\end{gather*}
for integers $|h| \leq M.$ We also write
\begin{align*}
\sigma^{\mathrm{rand}}(\boldsymbol{j}):=\mathbb{E}\prod_{i=-M}^M g_{j(i)}(S_i(\mathbb{X}))
\end{align*}
for $\boldsymbol{j} \in [-J,J+1]^{2M+1},$ where $J$ is the parameter in Lemma \ref{lem:partition}, and
\begin{align*}
\mathbb{E}^{\boldsymbol{j}, \mathrm{rand}} W(\mathbb{X}):=
\sigma^{\mathrm{rand}}(\boldsymbol{j})^{-1} \mathbb{E} \left( W(\mathbb{X}) \prod_{i=-M}^M g_{j(i)}(S_i(\mathbb{X})) \right)
\end{align*}
for complex-valued functions $W$ whenever $\sigma^{\mathrm{rand}}(\boldsymbol{j})>0$. Terms for which $\sigma^{\mathrm{rand}}(\boldsymbol{j})=0$ are defined to be zero. Applying Lemma \ref{lem:orthog} and Proposition \ref{prop:bridge} with $Y=2M+1$ satisfying $xP^{4SY}<N_k,$ Harper's argument carries over to the Sato--Tate random multiplicative functions without modifications. Similarly, it remains to show
\begin{align*}
\sum_{\boldsymbol{j} \in [-J, J+1]^{2M+1}} \sigma^{\mathrm{rand}}(\boldsymbol{j}) \left( \mathbb{E}^{\boldsymbol{j}, \mathrm{rand}}
\left|\sum_{\substack{n \leq x \\ P^{+}(n) > x^{1/\log\log x}}} \mathbb{X}(n)
\right|^2\right)^q \ll \left( \frac{x}{1+(1-q)\sqrt{\log \log P}} \right)^q.
\end{align*}

Following \cite[Section 3.4]{harper2023typicalsizecharacterzeta} with Lemma \ref{lem:orthog}, Harper's argument carries over to the Sato--Tate random multiplicative functions without modifications whenever $P<x^{1/\log \log x}.$ Similarly, it remains to show that
\begin{gather*}
\sum_{|v| \leq \log^{0.01} P} \frac{1}{(1+|v|)^{2q}} \sum_{\boldsymbol{j}} \sigma^{\mathrm{rand}}(\boldsymbol{j}) \left(\mathbb{E}^{\boldsymbol{j}, \mathrm{rand}} \int_{v-1/2}^{v+1/2} |F_{P}^{\mathrm{rand}}(1/2 + it)|^2 dt \right)^q \\
\ll \Biggl(\frac{\log P}{1 + (1-q)\sqrt{\log\log P}}\Biggr)^{q}.
\end{gather*}
We shall adapt the argument from \cite[Section 3.5]{harper2023typicalsizecharacterzeta}. However, we are not confined to the case $v=0$
since large values of $v$ introduce additional difficulties in the setting of the Sato--Tate random multiplicative functions. 

Let $v\in\mathbb{Z}$ satisfy $|v| \leq \log^{0.01} P.$ Then by Jensen's inequality, we have
\begin{gather*}
\sum_{\boldsymbol{j}} \sigma^{\mathrm{rand}}(\boldsymbol{j}) \left(\E^{\boldsymbol{j}, \mathrm{rand}} \int_{v-1/2}^{v+1/2} |F_{P}^{\mathrm{rand}}(1/2+ it)|^2 dt \right)^q  \\
 \ll  \sum_{\boldsymbol{j}} \sigma^{\mathrm{rand}}(\boldsymbol{j})  \left( \E^{\boldsymbol{j}, \mathrm{rand}} \sum_{|h| \leq \frac{1}{2}K} \int_{-\frac{1}{2K}}^{\frac{1}{2K}} \left|\Delta F_{P}^{\mathrm{rand}}\left(\frac{1}{2}+iv+ i\frac{h}{K};it \right) \right|^2 dt \right)^q  \\
 + \sum_{\boldsymbol{j}} \sigma^{\mathrm{rand}}(\boldsymbol{j}) \left(\E^{\boldsymbol{j}, \mathrm{rand}} \frac{1}{K} \sum_{|h| \leq \frac{1}{2}K} \left|F_{P}^{\mathrm{rand}}\left(\frac{1}{2} +iv+ i\frac{h}{K}\right)\right|^2 \right)^q,
\end{gather*}
where
\begin{align*}
\Delta F_{P}^{\mathrm{rand}}\left(\frac{1}{2}+iv+ i\frac{h}{K};it \right):=
F_{P}^{\mathrm{rand}}\left(\frac{1}{2} +iv+ i\frac{h}{K}+it\right)
- F_{P}^{\mathrm{rand}}\left(\frac{1}{2} +iv+ i\frac{h}{K}\right).
\end{align*}
Applying H\"older's inequality followed by Lemma \ref{lem:ctntodisc}, the first sum here is
\begin{align*}
\leq \left( \E \sum_{|h| \leq \frac{1}{2}K} \int_{-\frac{1}{2K}}^{\frac{1}{2K}} \left| \Delta F_{P}^{\mathrm{rand}}\left(\frac{1}{2}+iv+ i\frac{h}{K};it \right) \right|^2 dt \right)^q \leq \log^{0.99q} P,
\end{align*}
which is negligible.

Similarly, let us define the (random) bad set
\begin{align*}
\mathcal{T}_v := \left\{h \in \Z : \left|F_{P}^{\mathrm{rand}}\left(\frac{1}{2} +iv+ i\frac{h}{K}\right)\right| \geq \log^{1.1}P \ \text{or} \ \left|F_{P}^{\mathrm{rand}}\left(\frac{1}{2} +iv+ i\frac{h}{K}\right)\right| \leq \frac{1}{\log^{1.1}P} \right\}.
\end{align*}
Then H\"older's inequality gives
\begin{gather}
 \sum_{\boldsymbol{j}} \sigma^{\mathrm{rand}}(\boldsymbol{j}) \left(\E^{\boldsymbol{j}, \mathrm{rand}} \frac{1}{K} \sum_{|h| \leq \frac{1}{2}K} \left|F_{P}^{\mathrm{rand}}\left(\frac{1}{2} +iv+ i\frac{h}{K}\right)\right|^2 \right)^q \nonumber \\
 \leq  \sum_{\boldsymbol{j}} \sigma^{\mathrm{rand}}(\boldsymbol{j}) \left(\E^{\boldsymbol{j}, \mathrm{rand}} \frac{1}{K} \sum_{\substack{|h| \leq \frac{1}{2}K \\ h \notin \mathcal{T}_v}} \left|F_{P}^{\mathrm{rand}}\left(\frac{1}{2} +iv+ i\frac{h}{K} \right)\right|^2 \right)^q \nonumber \\
+ \left( \frac{1}{K} \sum_{|h| \leq \frac{1}{2}K} \E \mathbf{1}_{\{h \in \mathcal{T}_v\}} \left|F_{P}^{\mathrm{rand}}\left(\frac{1}{2} +iv+ i\frac{h}{K}\right)\right|^2 \right)^q . 
\label{eq:goodbad}
\end{gather}
By Markov's inequality, we have
\begin{align*}
\E \mathbf{1}_{\{h \in \mathcal{T}_v\}} \left|F_{P}^{\mathrm{rand}}\left(\frac{1}{2} +iv+ i\frac{h}{K}\right) \right|^2
\leq  \frac{1}{\log^{1.1 \alpha} P} \cdot \E  \left|F_{P}^{\mathrm{rand}}\left(\frac{1}{2} +iv+ i\frac{h}{K}\right)\right|^{2+\alpha} + \frac{1}{\log^{2.2} P}
\end{align*}
for any $\alpha>0.$ Applying Lemma \ref{lem:beta}, this is
\begin{align} 
\ll_{\alpha}
\begin{cases}
(\log P)^{\beta^2-1.1 \alpha} 
( 
1+\min \left\{  \log P, K/\max\{1,|h|\} \right\}
)^{\beta(\beta-1)} 
+ \dfrac{1}{\log^{2.2} P} 
& \mbox{{\normalfont if $v=0,$} } \\
\hfil (\log P)^{\beta^2-1.1 \alpha} 
 (\log (3+|v|))^{O(\beta(\beta-1))}
+ \dfrac{1}{\log^{2.2} P} 
& \mbox{{\normalfont otherwise,} } 
\end{cases} 
\label{eq:upperboundcases}
\end{align}
where $\beta:=1+\frac{1}{2}\alpha.$ 

If $v=0,$ then (\ref{eq:upperboundcases}) gives
\begin{gather*}
\frac{1}{K} \sum_{|h| \leq \frac{1}{2}K} \E \mathbf{1}_{\{h \in \mathcal{T}_v\}} \left|F_{P}^{\mathrm{rand}}\left(\frac{1}{2} + i\frac{h}{K}\right)\right|^2 \\
= \frac{1}{K} \left\{\sum_{|h| \leq \log^{0.01} P} + \sum_{\log^{0.01} P<|h| \leq \frac{1}{2}K}\right\}\E \mathbf{1}_{\{h \in \mathcal{T}_v\}} \left|F_{P}^{\mathrm{rand}}\left(\frac{1}{2} + i\frac{h}{K}\right)\right|^2 \\
\ll_{\alpha} (\log P)^{2\beta^2-\beta-1.1\alpha-1}+(\log P)^{\beta^2-1.1\alpha} K^{\beta^2-\beta-1} \sum_{1\leq |h| \leq K} \frac{1}{|h|^{\beta^2-\beta}}+\frac{1}{\log^{2.2} P} \\
\ll_{\alpha} (\log P)^{1-0.1\alpha+\frac{1}{4}\alpha^2}.
\end{gather*}
Choosing any $\alpha \in (0.02,0.03),$ this is $\ll \log^{0.999} P,$ and the second term in (\ref{eq:goodbad}) is $\ll \log^{0.999q}P,$ which is negligible.

Otherwise, if $0<|v| \leq \log^{0.01} P,$ then (\ref{eq:upperboundcases}) gives
\begin{gather*}
\frac{1}{K} \sum_{|h| \leq \frac{1}{2}K} \E \mathbf{1}_{\{h \in \mathcal{T}_v\}} \left|F_{P}^{\mathrm{rand}}\left(\frac{1}{2} +iv+ i\frac{h}{K}\right)\right|^2 \\
\ll_{\alpha} (\log P)^{\beta^2-1.1 \alpha} 
 (\log \log P)^{O(\beta(\beta-1))}
+ \dfrac{1}{\log^{2.2} P}.
\end{gather*}
Similarly, this is again $\ll \log^{0.999} P$ for any $\alpha \in (0.02,0.03),$ and the second term in (\ref{eq:goodbad}) is $\ll \log^{0.999q}P,$ which is also negligible. 

For integers $|v|\leq\log^{0.01}P$ and $|h|\leq K/2$, we have $|vK+h|\leq M$ for all sufficiently large $P$. Regarding the first sum in (\ref{eq:goodbad}), we adapt the remaining part of the argument from \cite[Section 3.5]{harper2023typicalsizecharacterzeta}, which carries over to the Sato--Tate random multiplicative functions whenever $J \geq 1.2\log \log P$ and $0<\delta \leq (J\sqrt{\log P})^{-1}$ as the parameter in Lemma \ref{lem:partition} without modification. Similarly, it remains to show
\begin{gather*}
\sum_{|v| \leq \log^{0.01} P} \frac{1}{(1+|v|)^{2q}} \sum_{\boldsymbol{j}} \sigma^{\mathrm{rand}}(\boldsymbol{j}) \\ \cdot \left(\E^{\boldsymbol{j}, \mathrm{rand}} \frac{1}{K} \sum_{\substack{|h| \leq \frac{1}{2}K \\ h \notin \mathcal{T}_v}} 
\mathbf{1}_{\{|S_{vK+h}(\mathbb{X})-j(vK+h)|\leq 1\}}
\left|F_{P}^{\mathrm{rand}}\left(\frac{1}{2} +iv+ i\frac{h}{K}\right)\right|^2 \right)^q \\
\ll \left(\frac{\log P}{1 + (1-q)\sqrt{\log\log P}}\right)^{q}.
\end{gather*}

Following \cite[Section 3.6]{harper2023typicalsizecharacterzeta} closely, 
whenever $0<\delta \leq ( J\log^{1.2} P \sqrt{\log \log P} )^{-1},$ it reduces to showing
\begin{gather*}
\sum_{|v| \leq \log^{0.01} P} \frac{1}{(1+|v|)^{2q}} \mathbb{E}\left( \frac{1}{K} \sum_{|h| \leq K/2} \left|F_{P}^{\mathrm{rand}}\left(\frac{1}{2} +iv+ i\frac{h}{K}\right)\right|^2 \right)^q \\
\ll \left(\frac{\log P}{1 + (1-q)\sqrt{\log\log P}}\right)^{q},
\end{gather*}
which follows immediately from Lemma \ref{lem:discretemultchaos}. When $L_k$ is bounded, the desired estimate follows from Lemma \ref{lem:largesieve}. Otherwise, the proof is complete by choosing $P$ to be the largest number below $\exp(\log^{1/6} L_k)$ for which $\log^{0.01} P$ is an integer.
\end{proof}

We now turn to Corollary \ref{cor:markov}.

\begin{proof}[Proof of Corollary \ref{cor:markov}]
The corollary is an immediate consequence of Theorem \ref{thm:det} with Markov's inequality (see \cite[pp. 30-31]{harper2023typicalsizecharacterzeta} for details).
\end{proof}

\section{Proof of Proposition \ref{prop:twistedmoments}}

Due to the lack of complete multiplicativity of Hecke eigenvalues, the proof of \cite[Lemma~1]{harper2023typicalsizecharacterzeta} cannot be adapted, as it relies crucially on the complete multiplicativity of Dirichlet characters. Nevertheless, Hecke eigenvalues satisfy the Hecke recursion (see Lemma~\ref{lem:recursion}), which motivates the introduction of several operators and estimates for their operator norms with respect to the Hecke $s$-norms $\|\cdot\|_{s}$, defined as follows.

\begin{definition}[Hecke $s$-norm]
 Given a set of primes $\mathcal{P},$ an integer $s\geq 0,$ and a finitely supported sequence of complex numbers $\boldsymbol{c}=(c(n))_{n\geq 1}$, we define the \textit{Hecke $s$-norm} of $\boldsymbol{c}$ by
\[
\|\boldsymbol{c}\|_{\mathcal{P},s}=\|\boldsymbol{c}\|_{s}:=\left(\sum_{n\geq 1} b_s(t(n))|c(n)|^2\right)^{1/2},
\]
where $t(n):=\omega_{\mathcal{P}}(n)$ for integers $n\geq 1$, and
$b_s(t):=\binom{s+t}{t}$ for integers $s,t\geq 0$. 
\end{definition}

For later use, we record some elementary properties of the binomial coefficients $b_s(t).$

\begin{lemma}  \label{lem:binomial}
Let $s,t \geq 0$ be integers. Then
\begin{align*} 
tb_{s}(t)  \leq (t+1)b_s(t+1)=(s+1)b_{s+1}(t).
\end{align*}

\begin{proof}
The lemma follows by a straightforward verification.
\end{proof}

\end{lemma}

We introduce three operators $\mathcal{L}_r, \mathcal{D}, \mathcal{R}_r$ and estimate their operator norms with respect to $\|\cdot\|_s.$

\begin{lemma} \label{lem:lowering}

Given an integer $r \geq 1$ and finitely supported sequences of complex numbers $\boldsymbol{\gamma}=(\gamma_p)_{p\in\mathcal{P}}, \boldsymbol{c}=(c(n))_{n\geq 1},$ we define
\begin{align*}
(\mathcal{L}_r \boldsymbol{c})(m):=\sum_{p\in\mathcal{P}} \gamma_p c(mp^r)
\end{align*}
for integers $m \geq 1.$ Let $s \geq 0$ be an integer. Then
\begin{align*}
\|\mathcal{L}_r \boldsymbol{c}\|_s^2 \leq (s+1) \|\boldsymbol{\gamma}\|_2^2 \|\boldsymbol{c}\|_{s+1}^2,
\end{align*}
where
\begin{align*}
\|\boldsymbol{\gamma}\|_2:=\left( \sum_{p\in\mathcal{P}} |\gamma_p|^2 \right)^{1/2}.
\end{align*}
\begin{proof}
By the Cauchy--Schwarz inequality, we have
\begin{align*}
\|\mathcal{L}_r \boldsymbol{c}\|_s^2 =& \sum_m b_s(t(m))\left|\sum_{p\in\mathcal{P}} \gamma_p c(mp^r)\right|^2 \\
\leq & \|\boldsymbol{\gamma}\|_2^2 \sum_{m} b_s(t(m)) \sum_{p\in\mathcal{P}} |c(mp^r)|^2 \\
=& \|\boldsymbol{\gamma}\|_2^2 \sum_n |c(n)|^2 \sum_{\substack{p\in\mathcal{P}\\p^r | n}} b_s(t(n/p^r)).
\end{align*}
Since $t(n/p^{r}) \leq t(n)$ if $p^r \mid n$ and $b_s$ is increasing in $t$, we have $b_s(t(n/p^r)) \leq b_s(t(n)).$ Therefore, this is
\begin{align*}
\leq \|\boldsymbol{\gamma}\|_2^2 \sum_n |c(n)|^2 t(n) b_s(t(n)).
\end{align*}
Applying Lemma \ref{lem:binomial}, this is
\begin{align*}
\leq (s+1)\|\boldsymbol{\gamma}\|_2^2 \sum_n b_{s+1}(t(n)) |c(n)|^2=  (s+1)\|\boldsymbol{\gamma}\|_2^2 \|\boldsymbol{c}\|_{s+1}^2,
\end{align*}
and the lemma follows.
\end{proof}

\end{lemma}

\begin{lemma} \label{lem:diag}
Given finitely supported sequences of complex numbers $\boldsymbol{\gamma}=(\gamma_p)_{p\in\mathcal{P}}$ and $ \boldsymbol{c}=(c(n))_{n\geq 1},$ we define
\begin{align*}
(\mathcal{D} \boldsymbol{c})(m):=c(m)\sum_{\substack{p\in\mathcal{P}\\p \mid m}} \gamma_p    
\end{align*}
for integers $m \geq 1.$ Let $s \geq 0$ be an integer. Then
\begin{align*}
\|\mathcal{D} \boldsymbol{c}\|_s^2 \leq (s+1) \|\boldsymbol{\gamma}\|_2^2\|\boldsymbol{c}\|_{s+1}^2.
\end{align*}

\begin{proof}
By the Cauchy--Schwarz inequality, we have
\begin{align*}
\|\mathcal{D} \boldsymbol{c}\|_s^2 =& \sum_n b_s(t(n))\left| c(n)\sum_{\substack{p\in\mathcal{P}\\p \mid n}} \gamma_p \right|^2 \\
\leq & \|\boldsymbol{\gamma}\|_2^2 \sum_n  t(n) b_s(t(n)) |c(n)|^2.
\end{align*}
Applying Lemma \ref{lem:binomial}, this is
\begin{align*}
\leq (s+1) \|\boldsymbol{\gamma}\|_2^2 \sum_{n} b_{s+1}(t(n))|c(n)|^2 = 
(s+1) \|\boldsymbol{\gamma}\|_2^2 \|\boldsymbol{c}\|_{s+1}^2,
\end{align*}
and the lemma follows.
\end{proof}

\end{lemma}

\begin{lemma} \label{lem:raising}
Given an integer $r \geq 1$ and finitely supported sequences of complex numbers $\boldsymbol{\gamma}:=(\gamma_p)_{p\in\mathcal{P}}, \boldsymbol{c}=(c(n))_{n\geq 1},$ we define
\begin{align*}
(\mathcal{R}_r \boldsymbol{c})(m):=\sum_{\substack{p\in\mathcal{P}\\p^r | m}} \gamma_p c(m/p^r)
\end{align*}
for integers $m \geq 1.$ Let $s \geq 0$ be an integer. Then
\begin{align*}
\|\mathcal{R}_r \boldsymbol{c}\|_s^2 \leq (s+1) \|\boldsymbol{\gamma}\|_2^2 \|\boldsymbol{c}\|_{s+1}^2.
\end{align*}

\begin{proof}
By the Cauchy--Schwarz inequality, we have
\begin{align}
\|\mathcal{R}_r \boldsymbol{c}\|_s^2 = & \sum_m b_s(t(m))\left| \sum_{\substack{p\in\mathcal{P}\\p^r | m}} \gamma_p c(m/p^r) \right|^2 \nonumber\\
\leq &  \sum_m  t(m) b_s(t(m)) \sum_{\substack{p\in\mathcal{P}\\p^r | m}} |\gamma_p|^2 |c(m/p^r)|^2 \nonumber\\
=& \sum_{p\in\mathcal{P}}  |\gamma_p|^2 \sum_n t(np^r)b_s(t(np^r)) |c(n)|^2. 
\label{eq:raiseexpress}
\end{align}
Considering separately the cases $p\mid n$ and $p\nmid n$, Lemma \ref{lem:binomial} gives
\begin{align*}
t(np^r)\, b_s(t(np^r)) \leq (s+1)b_{s+1}(t(n)).
\end{align*}
Therefore, it follows from (\ref{eq:raiseexpress}) that
\begin{align*}
\|\mathcal{R}_r \boldsymbol{c}\|_s^2 \leq (s+1)\|\boldsymbol{\gamma}\|_2^2 \sum_n b_{s+1}(t(n)) |c(n)|^2
=(s+1)\|\boldsymbol{\gamma}\|_2^2 \|\boldsymbol{c}\|_{s+1}^2,
\end{align*}
and the lemma follows.
\end{proof}

\end{lemma}

\begin{proof}[Proof of Proposition \ref{prop:twistedmoments}]
We prove only \eqref{eq:dettwisted}, as the proof of \eqref{eq:randomtwisted} is largely analogous and, in fact, much simpler. The case $\ell=0$ follows directly from Lemma \ref{lem:largesieve}, so we assume that $\ell\geq 1.$
Let $\mathcal{P}_1:=\{ p \in \mathcal{P} \,:\, p \in  \mathcal{Q} \}$ and $ \mathcal{P}_2:=
\{ p \in \mathcal{P} \,:\, p^2 \in  \mathcal{Q} \}.$ We also let
\begin{align*}
\alpha_p:=\frac{a(p)}{\sqrt{p}}\cdot 1_{p \in \mathcal{P}_1},
\end{align*}
and
\begin{align*}
\beta_p:=\frac{a(p^2)}{p}\cdot 1_{p \in \mathcal{P}_2}.
\end{align*}
Then
\begin{align*}
Q(f)=\sum_{p} (\alpha_p \lambda_f(p)+\beta_p \lambda_f(p^2)).
\end{align*}
Given a finitely supported sequence of complex numbers $\boldsymbol{c}=(c(n))_{n \geq 1},$ we define
\begin{align*}
C(f):=\sum_{n} c(n) \lambda_f(n).
\end{align*}
Applying (\ref{eq:dethecke}) from Lemma \ref{lem:recursion}, we obtain
\begin{align*}
\lambda_f(m)\lambda_f(p)=\lambda_f(mp)+1_{p \mid m}\lambda_f(m/p)
\end{align*}
and
\begin{align*}
\lambda_f(m)\lambda_f(p^2)=\lambda_f(mp^2)+1_{p \mid m} \lambda_f(m)+1_{p^2 | m} \lambda_f(m/p^2)
\end{align*}
for integers $1 \leq m<N$ and primes $p<N$ in the first identity, and for integers $1\leq m<N$ and primes satisfying $p^2<N$ in the second identity. Thus
\begin{align*}
Q(f)C(f)=\sum_{m} (\mathcal{T} \boldsymbol{c})(m) \lambda_f(m),
\end{align*}
where $(\mathcal{T} \boldsymbol{c})(m)$
\begin{align}
:=
\sum_p \beta_pc(mp^2) 
+\sum_p \alpha_p c(mp)
+\sum_{p | m} \beta_p c(m)
+\sum_{p \mid m} \alpha_pc(m/p)
+\sum_{p^2 | m} \beta_pc(m/p^2) \label{eq:tau}
\end{align}
for integers $m \geq 1.$ Let $\boldsymbol{c}^{(0)}:=\boldsymbol{c}$ and $\boldsymbol{c}^{(j)}:=\mathcal{T}\boldsymbol{c}^{(j-1)}$ for integers $j \geq 1.$ Then

\begin{align}
Q(f)^j C(f) = \sum_{m} c^{(j)} (m) \lambda_f(m) \label{eq:qfjcf}
\end{align}
for $0\leq j\leq\ell$, since $\boldsymbol{c}^{(j)}$ is supported on $m\leq xU^j$ and $xU^\ell<N_k\leq N.$ Applying Lemma \ref{lem:lowering} and Lemma \ref{lem:raising} with
\begin{align*}
    \boldsymbol{\gamma}=
    \begin{cases}
    \boldsymbol{\alpha} & \mbox{{\normalfont if $r=1,$}} \\
    \boldsymbol{\beta} & \mbox{{\normalfont if $r=2$}}
    \end{cases}
\end{align*}
and Lemma \ref{lem:diag} with $\boldsymbol{\gamma}=\boldsymbol{\beta},$ 
it follows from (\ref{eq:tau}) that
\begin{align*}
\mathcal{T}=
\mathcal{L}_2
+\mathcal{L}_1
+\mathcal{D}
+\mathcal{R}_1
+\mathcal{R}_2,
\end{align*}
and
\begin{align} \label{eq:normbound}
\|\mathcal{T} \boldsymbol{c}\|_s \leq& \|\mathcal{L}_2 \boldsymbol{c}\|_s+\|\mathcal{L}_1 \boldsymbol{c}\|_s+\|\mathcal{D} \boldsymbol{c}\|_s+\|\mathcal{R}_1 \boldsymbol{c}\|_s+\|\mathcal{R}_2 \boldsymbol{c}\|_s \nonumber\\
\leq & (s+1)^{1/2}  (2\|\boldsymbol{\alpha}\|_2+3\|\boldsymbol{\beta}\|_2) \|\boldsymbol{c}\|_{s+1}.
\end{align}
Applying (\ref{eq:normbound}) successively with $s=0,1,\ldots, \ell-1,$ we obtain
\begin{align*}
\|\mathcal{T}^{\ell }\boldsymbol{c}\|_0^2 \leq & \,
(2\|\boldsymbol{\alpha}\|_2+3\|\boldsymbol{\beta}\|_2)^2 \|\mathcal{T}^{\ell-1} \boldsymbol{c}\|_1^2 \\
\leq & \,2 (2\|\boldsymbol{\alpha}\|_2+3\|\boldsymbol{\beta}\|_2)^4  \|\mathcal{T}^{\ell-2} \boldsymbol{c}\|_2 ^2 \\
\leq & \,\cdots \\
\leq & \, \ell! (2\|\boldsymbol{\alpha}\|_2+3\|\boldsymbol{\beta}\|_2)^{2\ell} \|\boldsymbol{c}\|_{\ell}^2. 
\end{align*}
Since the Hecke $0$-norm is the ordinary $\ell^2$-norm, we conclude that
\begin{align*}
\|\boldsymbol{c}^{(\ell)} \|_{\ell^2}^2:=\sum_m|c^{(\ell)}(m)|^2
=\|\boldsymbol{c}^{(\ell)}\|_0^2 \leq \ell! (2\|\boldsymbol{\alpha}\|_2+3\|\boldsymbol{\beta}\|_2)^{2\ell}
\sum_n \binom{\ell+\omega_{\mathcal{P}}(n)}{\ell} |c(n)|^2.
\end{align*}
Since
\begin{align*}
\binom{\ell+\omega_{\mathcal{P}}(n)}{\ell}
\leq 2^{\ell+\omega_{\mathcal{P}}(n)}
\leq 2^\ell d_{\mathcal{P}}(n)
\end{align*}
and $(2u+3v)^2\leq 13(u^2+v^2)$ for $u,v\geq 0,$ this is
\begin{align} \label{eq:clnorm}
\leq \ell! (26(\|\boldsymbol{\alpha}\|_2^2+\|\boldsymbol{\beta}\|_2^2))^{\ell}
\sum_{n \leq x} d_{\mathcal{P}}(n) |c(n)|^2.
\end{align}
Using (\ref{eq:qfjcf}), we have
\begin{align*}
|Q(f)|^{2\ell} |C(f)|^2 = \left| \sum_{m \leq xU^{\ell}} c^{(\ell)}(m)\lambda_f(m) \right|^2.
\end{align*}
Finally, the proposition follows from Lemma \ref{lem:largesieve} with (\ref{eq:clnorm}).
\end{proof}

\section{Proof of Proposition \ref{prop:bridge}}

We require the following estimates for twisted (mixed) even moments.

\begin{lemma} \label{lem:mixedmoments}
Given real numbers $x \geq 1$ and $P \geq 2$, let $\mathcal{P}:=\{p:p\leq P\}$. Given a sequence of complex numbers $\boldsymbol{c}=(c(n))_{n \leq x}$ of modulus at most one, let
\begin{align*}
C(\mathbb{X}):=\sum_{n \leq x} c(n) \mathbb{X}(n),
\end{align*}
\begin{align*}
C(f):=\sum_{n \leq x} c(n) \lambda_f(n),
\end{align*}
and
\begin{align*}
D:=\sum_{n \leq x} d_{\mathcal{P}}(n).
\end{align*}
Given an integer $Y \geq 1$ and sequences of complex numbers $(a_i(p))_{p \leq P}, (a_i(p^2))_{p \leq P}$ of modulus at most one, let
\begin{align*}
Q_i(\mathbb{X}):=\sum_{\substack{p \leq P}} \Re\left(\frac{a_i(p)\mathbb{X}(p)}{\sqrt{p}}+\frac{a_i(p^2) \mathbb{X}(p^2)}{p} \right)
\end{align*}
and
\begin{align*}
Q_i(f):=\sum_{\substack{p \leq P}} \Re\left(\frac{a_i(p)\lambda_f(p)}{\sqrt{p}}+\frac{a_i(p^2)\lambda_f(p^2)}{p} \right)
\end{align*}
for $i=1,\ldots,Y.$ Then for any integers $\ell_1, \ldots, \ell_Y \geq 0,$ we have
\begin{align} \label{eq:ranmixed}
\mathbb{E}  |C(\mathbb{X})|^2 \prod_{i=1}^Y |Q_i(\mathbb{X})|^{2\ell_i}
\ll D L! M^{L},
\end{align}
where $L:=\sum_{i=1}^Y \ell_i$ and $M:=26 \log \log (100P).$

Moreover, if $xP^{2L} < N_k,$ then
\begin{align} \label{eq:detmixed}
\mathbb{E}_{f \in \mathcal{H}_k(N)}^h |C(f)|^2 \prod_{i=1}^Y |Q_i(f)|^{2\ell_i}
\ll_{k} D L! M^{L}.
\end{align}

\begin{proof}
The case $L=0$ follows from Lemma \ref{lem:orthog} in the random case and Lemma \ref{lem:largesieve} in the deterministic case. We may therefore assume that $L\geq 1.$
By the weighted arithmetic--geometric mean inequality, we have 
\begin{align*}
\prod_{i=1}^Y |Q_i(\cdot)|^{2\ell_i} \leq \sum_{i=1}^Y \frac{\ell_i}{L} \cdot |Q_i(\cdot)|^{2L}.
\end{align*}
Therefore, using the bound
\begin{align*}
\sum_{p \leq P} \frac{1}{p} +\sum_{p \leq P}\frac{1}{p^2} \leq \log \log (100P), 
\end{align*}
the lemma follows from Proposition \ref{prop:twistedmoments}.
\end{proof}
\end{lemma}

We now prove Proposition \ref{prop:bridge}. The approach is similar to the one used to prove \cite[Proposition~1]{harper2023typicalsizecharacterzeta}, but the lack of exact orthogonality in our setting results in additional error terms that must be controlled. This is achieved by exploiting the Petersson trace formula, in the form of the large sieve inequality Lemma \ref{lem:largesieveineq2}.

\begin{proof}[Proof of Proposition \ref{prop:bridge}]
We first prove the case $k \geq 4.$
Let $1 \leq i \leq Y$ be fixed for now. Write $S_i(\cdot)=Q_i(\cdot)-b_i,$ where
\begin{align*}
b_i:=\Re\sum_{p \leq P} \frac{a_i(p^2)}{p}.
\end{align*} 
Let $h_i(t):=g_{j(i)}(t-b_i),$ so that
$g_{j(i)}(S_i(\cdot))=h_i(Q_i(\cdot)).$ Write $h_i=\widetilde{h_i}+r_i,$ where
\begin{align*}
\widetilde{h_i}(t)=\sum_{\ell=0}^{2S-1} d_{i,\ell} t^{\ell},
\end{align*}
where $S \geq 1$ is the integer in the statement, and
\begin{align*}
d_{i,\ell}:=\frac{h_i^{(\ell)}(0)}{\ell !} =\frac{g_{j(i)}^{(\ell)}(-b_i)}{\ell !}
\end{align*}
for $\ell = 0, \ldots, 2S-1.$ Applying Lemma \ref{lem:partition}, we have $|d_{i,0}| \leq 1$ and
\begin{align} \label{eq:diell}
|d_{i,\ell}| \leq \frac{2J+1}{\pi(\ell+1)!} \left( \frac{2\pi}{\delta} \right)^{\ell+1} 
\end{align}
for $\ell = 1, \ldots, 2S-1.$ Note that the factor $2J+1$ here accounts for the case where
$g_{j(i)} = g_{J+1} = 1 - \sum_{|j| \leq J} g_j$. Also, Taylor's theorem gives
\begin{align} \label{eq:taylorremainder}
|r_i(t)| \leq \frac{2J+1}{\pi(2S+1)!} \left( \frac{2\pi}{\delta} \right)^{2S+1}|t|^{2S}
\end{align}
for $t \in \mathbb{R}.$

Our argument diverges from that of \cite[Proposition~1]{harper2023typicalsizecharacterzeta} from this point onward, due to the lack of exact orthogonality among Hecke eigenvalues.
We begin by controlling the contribution from the main terms in the Taylor expansion. Given integers $0\leq\ell_1, \ldots, \ell_Y\leq 2S-1,$ write $\ell_i=u_i+v_i$ for some integers $u_i, v_i \geq 0$ satisfying $|U-V| \leq 1,$ where $U:=\sum_{i=1}^Y u_i$ and $V:=\sum_{i=1}^Y v_i.$ Note that $\max\{U, V\} \leq SY.$
Using the notation of Lemma \ref{lem:mixedmoments}, we define
\begin{align*}
A(\cdot) := C(\cdot) \prod_{i=1}^Y Q_i(\cdot)^{u_i}
\end{align*}
and
\begin{align*}
B(\cdot) := C(\cdot) \prod_{i=1}^Y Q_i(\cdot)^{v_i}.
\end{align*}
Applying Lemma \ref{lem:recursion}, there exist sequences of complex numbers $(\alpha_m)_{m \geq 1}, (\beta_n)_{n \geq 1}$ such that
\begin{align*}
A(f)=\sum_{m \leq xP^{2U}} \alpha_m \lambda_f(m), \qquad A(\mathbb{X})=\sum_{m \leq xP^{2U}} \alpha_m \mathbb{X}(m),
\end{align*}
and
\begin{align*}
B(f)=\sum_{n \leq xP^{2V}} \beta_n \lambda_f(n) , \qquad
B(\mathbb{X})=\sum_{n \leq xP^{2V}} \beta_n \mathbb{X}(n).
\end{align*}
Applying Lemma \ref{lem:orthog} followed by (\ref{eq:ranmixed}) of Lemma \ref{lem:mixedmoments}, we have
\begin{align} \label{eq:alpha2}
\|\alpha\|_2^2=\mathbb{E} \left| C(\mathbb{X}) \prod_{i=1}^Y Q_i(\mathbb{X})^{u_i} \right|^2 \ll D U! M^{U}, 
\end{align}
and similarly
\begin{align} \label{eq:beta2}
\|\beta\|_2^2 \ll D V! M^{V}.
\end{align}
Since $Q_i(f)$ are real, we have
\begin{align*} 
A(\cdot)\overline{B(\cdot)} =| C(\cdot) |^2 \prod_{i=1}^Y Q_i(\cdot)^{\ell_i}.
\end{align*}
Applying Lemma \ref{lem:largesieve} with (\ref{eq:alpha2}) and (\ref{eq:beta2}), we obtain
\begin{gather*} 
\mathbb{E}^h_{f \in \mathcal{H}_k(N)} |C(f)|^2 \prod_{i=1}^Y Q_i(f)^{\ell_i}
- \mathbb{E} |C(\mathbb{X})|^2 \prod_{i=1}^Y Q_i(\mathbb{X})^{\ell_i} 
 \\
\ll_k \left(\frac{xP^{2SY}}{N} \right)^{k-1}  D \sqrt{L!} M^{L/2},
\end{gather*}
where $L:=\sum_{i=1}^Y \ell_i.$ Therefore, we conclude that
\begin{gather}
\mathbb{E}^h_{f \in \mathcal{H}_k(N)} \prod_{i=1}^Y
\widetilde{h_{i}} \left( Q_i(f) \right) \left| \sum_{n \leq x} c(n)\lambda_f(n) \right|^2 
- \mathbb{E} \prod_{i=1}^Y
\widetilde{{h}_{i}} \left( Q_i(\mathbb{X}) \right) 
\left| \sum_{n \leq x} c(n)\mathbb{X}(n) \right|^2  \nonumber \\ 
\ll_k \left( \frac{xP^{2SY}}{N} \right)^{k-1}D
\sum_{\substack{0\leq\ell_i\leq 2S-1\\1\leq i\leq Y}}
\sqrt{(\ell_1+\cdots+\ell_Y)!}\,
M^{(\ell_1+\cdots+\ell_Y)/2}\prod_{i=1}^Y|d_{i,\ell_i}|. \label{eq:widetildecompare}
\end{gather}
Writing $L:=\ell_1+\cdots+\ell_Y$ and using the bound
\begin{align*}
\frac{L!}{\ell_1! \cdots \ell_{Y}!} \leq Y^L,
\end{align*}
this is
\begin{align} \label{eq:aftermultinomial}
\ll_k D  \left( \frac{xP^{2SY}}{N} \right)^{k-1}\prod_{i=1}^Y 
\left( 
\sum_{\ell_i=0}^{2S-1} |d_{i,\ell_i}| \sqrt{\ell_i!} (M Y)^{\ell_i/2}
\right).
\end{align}
Let $W:=\frac{2\pi}{\delta} \sqrt{M Y}.$ Using (\ref{eq:diell}) followed by the Cauchy--Schwarz inequality, we obtain
\begin{align*}
\sum_{\ell=0}^{2S-1} |d_{i,\ell}| \sqrt{\ell!} (M Y)^{\ell/2}
\leq& 1+\frac{2(2J+1)}{\delta} \sum_{\ell=0}^{2S-1} \frac{W^{\ell}}{\sqrt{\ell!}} \\
\leq& 1+\frac{2(2J+1)}{\delta} \cdot \sqrt{2S} 
\left( \sum_{\ell=0}^{2S-1} \frac{W^{2\ell}}{\ell!} \right)^{1/2} \\
\leq& 1+\frac{2J+1}{\delta} \cdot \sqrt{8S} \exp(W^2/2). 
\end{align*}
Combining these estimates, we conclude that
\begin{gather}
\mathbb{E}^h_{f \in \mathcal{H}_k(N)} \prod_{i=1}^Y
\widetilde{h_{i}} \left( Q_i(f) \right) \left| \sum_{n \leq x} c(n)\lambda_f(n) \right|^2 
- \mathbb{E} \prod_{i=1}^Y
\widetilde{{h}_{i}} \left( Q_i(\mathbb{X}) \right) 
\left| \sum_{n \leq x} c(n)\mathbb{X}(n) \right|^2  \nonumber \\ 
\ll_k  \left( \frac{xP^{2SY}}{N} \right)^{k-1} D Z^Y,
\label{eq:taylormaindiff}
\end{gather}
where 
\begin{align*}
Z:=1+\frac{2J+1}{\delta} \cdot \sqrt{8S} \exp \left( \frac{2\pi^2 M Y}{\delta^2} \right).
\end{align*}

When $k=2$, the first case of Lemma \ref{lem:largesieve} replaces the factor $(xP^{2SY}/N)^{k-1}$ in the preceding argument by
\begin{align*}
\frac{xP^{2SY}\log(2xP^{2SY})}{N}.
\end{align*}
Under the assumption $xP^{4SY}<N_2=N/\log N$, this is $\ll P^{-2SY}$. Thus, in the case $k=2$, the right-hand side of (\ref{eq:taylormaindiff}) is replaced by $O_2(DP^{-2SY}Z^Y)$.

It remains to control the contribution from the remainder terms in the Taylor expansion. Since
$0 \leq h_i(t)=g_{j(i)}(t-b_i) \leq 1,$ it follows from (\ref{eq:taylorremainder}) that
\begin{align*}
|\widetilde{h_i}(t)| = |h_i(t)-r_i(t)| \leq 1+ H_S |t|^{2S},
\end{align*}
where
\begin{align*}
H_S:=  \frac{2J+1}{\pi(2S+1)!} \left( \frac{2\pi}{\delta} \right)^{2S+1}.
\end{align*}
Expanding the product telescopically, we obtain
\begin{align*}
\left| \prod_{i=1}^Y h_{i}(Q_i) - \prod_{i=1}^Y \widetilde{h_{i}}(Q_i) \right|  
\leq&  \sum_{i=1}^Y |r_i(Q_i)| \prod_{j<i} |\widetilde{h_{j}}(Q_{j})| \prod_{j>i}|h_{j}(Q_{j})| \\
\leq&   \sum_{i=1}^Y H_S |Q_i|^{2S} \prod_{j < i}(1+H_S|Q_j|^{2S}).
\end{align*}
Expanding the product followed by Lemma \ref{lem:mixedmoments}, we have
\begin{gather}
\left|\mathbb{E}^h_{f \in \mathcal{H}_k(N)} \prod_{i=1}^Y
h_{i} \left( Q_i(f) \right) \left| \sum_{n \leq x} c(n)\lambda_f(n) \right|^2 
-\mathbb{E}^h_{f \in \mathcal{H}_k(N)} \prod_{i=1}^Y
\widetilde{h_{i}} \left( Q_i(f) \right) \left| \sum_{n \leq x} c(n)\lambda_f(n) \right|^2
\right| \nonumber \\
\leq \mathbb{E}^h_{f \in \mathcal{H}_k(N)} 
\left| \prod_{i=1}^Y h_{i}(Q_i(f)) - \prod_{i=1}^Y \widetilde{h_{i}}(Q_i(f)) \right|  
\left| \sum_{n \leq x} c(n)\lambda_f(n) \right|^2 \nonumber \\
\ll_k D  \sum_{i=1}^Y \sum_{j=1}^{i} \binom{i-1}{j-1} H_S^j (jS)! M^{jS}.
\label{eq:taylorremdiff}
\end{gather}
By Stirling's formula, we have
\begin{align*}
(2S)! \geq \left( \frac{2S}{e} \right)^{2S}
\end{align*}
and
\begin{align*}
(jS)! \ll \sqrt{jS} \left( \frac{jS}{e} \right)^{jS},
\end{align*}
and therefore
\begin{align} \label{eq:jS}
H_S^j (jS)! M^{jS} \ll \sqrt{jS} \left( \frac{2J+1}{\delta S} \right)^j 
\left( \frac{\pi^2 e j M}{\delta^2 S} \right)^{jS}
\end{align}
for $j=1,\ldots,Y.$ Recall that $S=\lceil C_0 Y \delta^{-2} \log (J \log P) \rceil$ for some absolute constant $C_0>0$ to be chosen later. Set $C_1:=1000.$ Since $j \leq Y$ and $M=26\log\log (100P),$ we have
\begin{align*}
\frac{\pi^2 e j M}{\delta^2 S} \leq \frac{C_1}{C_0}.
\end{align*}
Also, since $\log (2J+1) \ll \log(J\log P)$ and $\log(1/\delta) \ll \delta^{-2}$ for $\delta>0$ sufficiently small, by choosing $C_0$ sufficiently large, it follows from (\ref{eq:jS}) that $H_S^j (jS)! M^{jS}$ is
\begin{gather*}
 \ll 
\exp\left( \frac{1}{2}\log(jS) + j \log(2J+1) +j \log \left(\frac{1}{\delta} \right) - jS \log \left( \frac{C_0}{C_1} \right)  \right) 
\ll \exp(-2jS).
\end{gather*}
Substituting this into (\ref{eq:taylorremdiff}), we obtain
\begin{gather}
\mathbb{E}^h_{f \in \mathcal{H}_k(N)} \prod_{i=1}^Y
h_{i} \left( Q_i(f) \right) \left| \sum_{n \leq x} c(n)\lambda_f(n) \right|^2 
-\mathbb{E}^h_{f \in \mathcal{H}_k(N)} \prod_{i=1}^Y
\widetilde{h_{i}} \left( Q_i(f) \right) \left| \sum_{n \leq x} c(n)\lambda_f(n) \right|^2
 \nonumber \\
\ll_k 
D \sum_{i=1}^Y \sum_{j=1}^{i} \binom{i-1}{j-1} e^{-2jS} \nonumber \\
\ll_k D  e^{-2S}\sum_{i=1}^Y (1+e^{-2S})^{i-1} \nonumber \\
\ll_k D  Y e^{-S}. \label{eq:detcomparison}
\end{gather}
Arguing analogously, we also have
\begin{gather}
\mathbb{E} \prod_{i=1}^Y
h_{i} \left( Q_i(\mathbb{X}) \right) \left| \sum_{n \leq x} c(n)\mathbb{X}(n) \right|^2 
-\mathbb{E}\prod_{i=1}^Y
\widetilde{h_{i}} \left( Q_i(\mathbb{X}) \right) \left| \sum_{n \leq x} c(n)\mathbb{X}(n) \right|^2
 \nonumber\\
\ll D Y e^{-S} . \label{eq:randomcomparison}
\end{gather}
Therefore, combining (\ref{eq:taylormaindiff}), (\ref{eq:detcomparison}), and (\ref{eq:randomcomparison}), we conclude that
\begin{gather*}
 \mathbb{E}^h_{f \in \mathcal{H}_k(N)} \prod_{i=1}^Y
g_{j(i)} \left( S_i(f) \right) \left| \sum_{n \leq x} c(n)\lambda_f(n) \right|^2 
- \mathbb{E} \prod_{i=1}^Y
g_{j(i)} \left( S_i(\mathbb{X}) \right) 
\left| \sum_{n \leq x} c(n)\mathbb{X}(n) \right|^2  \\
= 
 \mathbb{E}^h_{f \in \mathcal{H}_k(N)} \prod_{i=1}^Y
h_{i} \left( Q_i(f) \right) \left| \sum_{n \leq x} c(n)\lambda_f(n) \right|^2 
- \mathbb{E} \prod_{i=1}^Y
h_{i} \left( Q_i(\mathbb{X}) \right) 
\left| \sum_{n \leq x} c(n)\mathbb{X}(n) \right|^2  \\
\ll_k D  \left(Y e^{-S} + \left( \frac{xP^{2SY}}{N} \right)^{k-1} Z^Y  \right).
\end{gather*}
Recall that
\begin{align*}
D=\sum_{n \leq x} d_{\mathcal{P}}(n),
\end{align*}
\begin{align*}
S=\lceil C_0 Y \delta^{-2} \log (J \log P) \rceil,
\end{align*}
and
\begin{align*}
Z=1+\frac{2J+1}{\delta} \cdot \sqrt{8S} \exp \left( \frac{2\pi^2 MY}{\delta^2} \right).
\end{align*}
Choosing $C_0$ to be sufficiently large
yields 
\begin{align*}
 Y e^{-S} \ll \frac{1}{(J\log P)^{2Y/\delta^2}}
\end{align*}
and $Z \leq \exp(S/100),$ so that
\begin{align*}
\left( \frac{xP^{2SY}}{N} \right)^{k-1} Z^Y \leq P^{-2(k-1)SY} \exp \left( \frac{SY}{100}\right)
\end{align*}
under the assumption $xP^{4SY}<N,$ which is negligible. In the case $k=2$, the corresponding contribution is bounded by
\begin{align*}
P^{-2SY}Z^Y\leq P^{-2SY}\exp(SY/100),
\end{align*}
which is also negligible. Finally, Mertens' estimate gives
\begin{align*}
D\leq x\prod_{p\leq P}\left(1-\frac{1}{p}\right)^{-1}\ll x\log P.
\end{align*}
Therefore, the proof is complete.
\end{proof}

\section{Proof of Theorem \ref{thm:modform}}


To prove Theorem \ref{thm:modform}, 
recall that if $f \in \mathcal{S}_k(\Gamma_0(N)),$ then the \textit{Fricke involution} of $f$ is defined as
\begin{align} \label{eq:fricke}
(f |_k W_N)(z):=N^{-k/2} z^{-k} f\left(-\frac{1}{Nz}\right) \qquad \text{for $z \in \mathbb{H}$}.
\end{align}
In fact, the Fricke operator is unitary, i.e.,
$\|f |_k W_N\|_{\rm Pet}=\|f\|_{\rm Pet}$
(see, e.g., \cite[Section 14.5]{MR2061214}). 

We split the proof of Theorem \ref{thm:modform} for the two ranges $1 \leq r \leq N$ and $r > N.$

\begin{proof}[Proof of Theorem \ref{thm:modform} for $1 \leq r \leq N$] 
We first prove the theorem for $1 \leq r \leq N_k.$
By Jensen's inequality, it suffices to consider $q \in [2/3,1],$ so that $2q-1>0.$ For $x \geq 1,$ denote
\begin{align*}
S_f(x):= \sum_{n \leq x} \lambda_f(n).
\end{align*}
By partial summation, we obtain
\begin{align*}
\sum_{n \leq N_k} \lambda_f(n) n^{\frac{k-1}{2}} e^{-\frac{2\pi n}{r}} 
= N_k^{\frac{k-1}{2}} e^{-\frac{2\pi N_k}{r}} S_f(N_k)
-\int_1^{N_k}  \left( \frac{k-1}{2x}-\frac{2\pi}{r} \right) x^{\frac{k-1}{2}} e^{-\frac{2\pi x}{r}} S_f(x) dx, 
\end{align*}
and therefore
\begin{gather} 
|f (i/r)| \leq {N_k}^{\frac{k-1}{2}} e^{-\frac{2\pi N_k}{r}} |S_f(N_k)|
+ \int_1^{N_k}  \left| \frac{k-1}{2x}-\frac{2\pi}{r} \right| x^{\frac{k-1}{2}} e^{-\frac{2\pi x}{r}} |S_f(x)| dx \nonumber \\
+ \left| \sum_{n>N_k}  \lambda_f(n) n^{\frac{k-1}{2}} e^{-\frac{2\pi n}{r}}  \right|.
\label{eq:afterpartial}
\end{gather}

To bound the contribution from the boundary term in (\ref{eq:afterpartial}), we apply Theorem \ref{thm:det} to get
\begin{align*}
\mathbb{E}_{f \in \mathcal{H}_k(N)}^h |S_f(N_k)|^{2q} \ll_k N_k^q,
\end{align*}
which implies that
\begin{align*}
 \mathbb{E}_{f \in \mathcal{H}_k(N)}^h ({N_k}^{\frac{k-1}{2}} e^{-\frac{2\pi N_k}{r}} |S_f(N_k)|)^{2q}
&\ll_k  N_{k}^{kq} \exp \left(-\frac{4\pi q N_k}{r} \right) \\
&= r^{kq} \left( \frac{N_k}{r} \right)^{kq} \exp \left(-\frac{4\pi q N_k}{r} \right).
\end{align*}
Set $R_k:=\min\left\{r,N_k/r\right\}.$ Since
\begin{align*} 
\log \log (10R_k) \ll_k \left( \frac{N_k}{r} \right)^{-2k}  \exp \left(\frac{8\pi  N_k}{r} \right),
\end{align*}
we conclude that
\begin{align} \label{eq:x=N_k}
 \mathbb{E}_{f \in \mathcal{H}_k(N)}^h ({N_k}^{\frac{k-1}{2}} e^{-\frac{2\pi N_k}{r}} |S_f(N_k)|)^{2q}
\ll_k   \left(\frac{r^{k}}{1+(1-q)\sqrt{\log \log (10R_k)}} \right)^q.
\end{align}

To bound the contribution from the integral in (\ref{eq:afterpartial}), we use the decomposition
\begin{align*}
[1,N_k]=[1,\sqrt{R_k}] \cup [\sqrt{R_k}, r\sqrt{R_k}] \cup [r\sqrt{R_k}, N_k] =:I_1 \cup I_2 \cup I_3,
\end{align*}
and denote
\begin{align*}
J_i(f):= \int_{I_i}  \left| \frac{k-1}{2x}-\frac{2\pi}{r} \right| x^{\frac{k-1}{2}} e^{-\frac{2\pi x}{r}} |S_f(x)| dx
\end{align*}
for $i=1,2,3.$ Since $2q-1>0,$ H\"older's inequality gives
\begin{gather} 
\mathbb{E}_{f \in \mathcal{H}_k(N)}^h  |J_i(f)|^{2q} \leq \left( \int_{I_i} \left| \frac{k-1}{2x}-\frac{2\pi}{r} \right| x^{\frac{k-1}{2}} e^{-\frac{2\pi x}{r}} dx \right)^{2q-1}  \nonumber\\
\cdot \int_{I_i} \left| \frac{k-1}{2x}-\frac{2\pi}{r} \right| x^{\frac{k-1}{2}} e^{-\frac{2\pi x}{r}} \cdot \mathbb{E}_{f \in \mathcal{H}_k(N)}^h  |S_f(x)|^{2q} dx.
\label{eq:jiholder}
\end{gather}

For the integral $J_1,$ Theorem \ref{thm:det} gives
\begin{align*}
\mathbb{E}_{f \in \mathcal{H}_k(N)}^h |S_f(x)|^{2q} \ll_k x^q.
\end{align*}
Combining this with
\begin{align*}
\int_1^{\sqrt{R_k}}
\left|\frac{k-1}{2x}-\frac{2\pi}{r}\right|
x^{\frac{k-1}{2}}e^{-\frac{2\pi x}{r}}\,dx
\ll_k R_k^{\frac{k-1}{4}},
\end{align*}
it follows from (\ref{eq:jiholder}) that
\begin{align*}
\mathbb{E}_{f \in \mathcal{H}_k(N)}^h |J_1(f)|^{2q}
&\ll_k R_k^{\frac{k-1}{4}\cdot (2q-1)} R_k^{\frac{k-1}{4}+\frac{q}{2}} \\
&= R_k^{kq/2}.
\end{align*}
Since $\log \log (10R_k) \ll R_k^{k},$ we have
\begin{align} \label{eq:j1}
\mathbb{E}_{f \in \mathcal{H}_k(N)}^h |J_1(f)|^{2q} \ll_k 
\left( \frac{r^k}{1+(1-q)\sqrt{\log \log (10R_k)}} \right)^q.
\end{align}

For the integral $J_2,$ note that $\min\{ x, N_k/x  \} \geq \sqrt{R_k}$ for $x \in I_2,$
and Theorem \ref{thm:det} gives 
\begin{align} \label{eq:j2thm}
\mathbb{E}_{f \in \mathcal{H}_k(N)}^h |S_f(x)|^{2q} \ll_k
\left( \frac{x}{1+(1-q)\sqrt{\log \log (10R_k)}} \right)^q.
\end{align}
Let $\alpha \in [0,1].$ Making the change of variables $y=x/r,$ we have
\begin{align} \label{eq:j2crude}
\int_{0}^{\infty} 
\left| \frac{k-1}{2x}-\frac{2\pi}{r} \right| x^{\frac{k-1}{2}} e^{-\frac{2\pi x}{r}} x^{\alpha} dx
&= r^{\frac{k-1}{2}+\alpha} \int_0^{\infty} \left| \frac{k-1}{2y}-2\pi \right| y^{\frac{k-1}{2}+\alpha}
e^{-2\pi y} dy \nonumber\\
&\ll_k r^{\frac{k-1}{2}+\alpha}. 
\end{align}
Taking $\alpha \in \{0,q\},$ it follows from (\ref{eq:jiholder}), (\ref{eq:j2thm}), and (\ref{eq:j2crude}) that
\begin{align} \label{eq:j2}
\mathbb{E}_{f \in \mathcal{H}_k(N)}^h |J_2(f)|^{2q}
&\ll_k r^{\frac{k-1}{2} \cdot (2q-1)}  \cdot
\frac{r^{\frac{k-1}{2}+q}}{(1+(1-q)\sqrt{\log \log (10R_k)})^{q}} \nonumber\\
&=  \left(\frac{r^{k}}{1+(1-q)\sqrt{\log \log (10R_k)}} \right)^q.
\end{align}

For the integral $J_3,$ let $\alpha \in [0,1].$ Making the change of variables $y=x/r,$
we have
\begin{align} \label{eq:j3crude}
\int_{r\sqrt{R_k}}^{\infty}  \left| \frac{k-1}{2x}-\frac{2\pi}{r} \right| x^{\frac{k-1}{2}} e^{-\frac{2\pi x}{r}} x^{\alpha} dx
&= r^{\frac{k-1}{2}+\alpha} \int_{\sqrt{R_k}}^{\infty} \left| \frac{k-1}{2y}-2\pi \right| y^{\frac{k-1}{2}+\alpha}
e^{-2\pi y} dy \nonumber \\
&\ll_k r^{\frac{k-1}{2}+\alpha}  e^{-\pi \sqrt{R_k}}. 
\end{align}
Theorem \ref{thm:det} yields
\begin{align*}
\mathbb{E}_{f \in \mathcal{H}_k(N)}^h |S_f(x)|^{2q} \ll_k x^q,
\end{align*}
and combining this with \eqref{eq:jiholder} and \eqref{eq:j3crude} (with $\alpha \in \{0,q\}$) that
\begin{align*}
\mathbb{E}_{f \in \mathcal{H}_k(N)}^h |J_3(f)|^{2q}
&\ll_k (r^{\frac{k-1}{2}}  e^{-\pi \sqrt{R_k}})^{2q-1} \cdot r^{\frac{k-1}{2}+q} e^{-\pi \sqrt{R_k}} \\
&= r^{kq} e^{-2\pi q \sqrt{R_k}}.
\end{align*}
Since $\log \log (10R_k) \ll  e^{4\pi  \sqrt{R_k}},$ we conclude that
\begin{align} \label{eq:j3}
\mathbb{E}_{f \in \mathcal{H}_k(N)}^h |J_3(f)|^{2q} \ll_k 
 \left(\frac{r^{k}}{1+(1-q)\sqrt{\log \log (10R_k)}} \right)^q.
\end{align}

To bound the contribution from the sum in (\ref{eq:afterpartial}), denote
$M_j:=2^j N_k$ for $j \geq 0.$ By Lemma \ref{lem:largesieveineq2}, we have
\begin{align*}
 \mathbb{E}_{f \in \mathcal{H}_k(N)}^h \left| \sum_{M_j < n \leq 2M_j} 
 \lambda_f(n) n^{\frac{k-1}{2}} e^{-\frac{2\pi n}{r}} 
 \right|^{2} &\ll_k \left( 1+\frac{M_j}{N}\right) 
 \sum_{M_j < n \leq 2M_j} 
 n^{k-1} e^{-\frac{4\pi n}{r}} \\
 &\ll_k \left( 1+\frac{M_j}{N}\right) M_j^{k} \exp \left( -\frac{4\pi M_j}{r} \right) \\
 &\ll_k 2^j M_j^{k}\exp \left( -\frac{4\pi M_j}{r} \right).
\end{align*}
Therefore, Minkowski's inequality gives
\begin{align*}
\left(\mathbb{E}_{f \in \mathcal{H}_k(N)}^h \left| \sum_{n>N_k}  \lambda_f(n) n^{\frac{k-1}{2}} e^{-\frac{2\pi n}{r}}  \right|^{2} \right)^{1/2}
&\leq \sum_{j \geq 0} \left(\mathbb{E}_{f \in \mathcal{H}_k(N)}^h \left| \sum_{M_j < n \leq 2M_j}  \lambda_f(n) n^{\frac{k-1}{2}} e^{-\frac{2\pi n}{r}}  \right|^{2} \right)^{1/2} \\
&\ll_k \sum_{j \geq 0} 2^{j/2} M_j^{k/2}\exp \left( -\frac{2\pi M_j}{r} \right) \\
&\ll_k N_k^{k/2}\sum_{j \geq 0} 2^{j(k+1)/2}  \exp \left( -\frac{2^{j+1}\pi N_k}{r} \right).
\end{align*}
Since by assumption $r \leq N_k,$ this is
\begin{align*}
\ll_k N_k^{k/2} \exp \left( -\frac{\pi N_k}{r} \right),
\end{align*}
and hence
\begin{align*}
\mathbb{E}_{f \in \mathcal{H}_k(N)}^h \left| \sum_{n>N_k}  \lambda_f(n) n^{\frac{k-1}{2}} e^{-\frac{2\pi n}{r}}  \right|^{2} &\ll_k N_k^{k} \exp \left( -\frac{2\pi N_k}{r} \right) \\
&= r^k \left( \frac{N_k}{r} \right)^{k} \exp \left( -\frac{2\pi N_k}{r} \right).
\end{align*}
Since 
\begin{align*}
\log \log (10R_k) \ll_k \left( \frac{N_k}{r} \right)^{-2k}  \exp \left(\frac{4\pi  N_k}{r} \right),
\end{align*}
we obtain
\begin{align*}
\mathbb{E}_{f \in \mathcal{H}_k(N)}^h \left| \sum_{n>N_k}  \lambda_f(n) n^{\frac{k-1}{2}} e^{-\frac{2\pi n}{r}}  \right|^{2} \ll_k \frac{r^k}{1+(1-q)\sqrt{\log \log (10R_k)}}.
\end{align*}
By Jensen's inequality, we conclude that
\begin{align} \label{eq:tail}
\mathbb{E}_{f \in \mathcal{H}_k(N)}^h \left| \sum_{n>N_k}  \lambda_f(n) n^{\frac{k-1}{2}} e^{-\frac{2\pi n}{r}}  \right|^{2q} &\leq
\left(\mathbb{E}_{f \in \mathcal{H}_k(N)}^h \left| \sum_{n>N_k}  \lambda_f(n) n^{\frac{k-1}{2}} e^{-\frac{2\pi n}{r}}  \right|^{2} \right)^q
\nonumber \\
&\ll_k \left(\frac{r^{k}}{1+(1-q)\sqrt{\log \log (10R_k)}} \right)^q.
\end{align}
Combining (\ref{eq:x=N_k}), (\ref{eq:j1}), (\ref{eq:j2}), (\ref{eq:j3}), and (\ref{eq:tail}), the bound
\begin{align} \label{eq:rleqNk}
\mathbb{E}_{f \in \mathcal{H}_k(N)}^h |f(i/r)|^{2q} \ll_{k} 
\left( \dfrac{r^k}{1+(1-q)\sqrt{\log \log (10R_k)}} \right)^q
\end{align}
follows for $1 \leq r \leq N_k.$ 

Since $N_k=N$ for $k \geq 4,$ it remains to handle the case $k=2.$
We first suppose that $1\leq r\leq \sqrt{N}.$ Since $\sqrt{N}\leq N_2$
for sufficiently large $N,$ the bound (\ref{eq:rleqNk}) applies. Moreover,
if $r\leq \sqrt{N_2},$ then $R_2=r,$ whereas if
$\sqrt{N_2}<r\leq \sqrt{N},$ then
\begin{align*}
R_2=\frac{N_2}{r}\geq \frac{\sqrt{N}}{\log N}.
\end{align*}
Therefore, we have
\begin{align*}
1+(1-q)\sqrt{\log\log(10R_2)}
\gg
1+(1-q)\sqrt{\log\log(10r)},
\end{align*}
and (\ref{eq:rleqNk}) gives
\begin{align} \label{eq:rleqsqrtn}
\mathbb{E}_{f \in \mathcal{H}_2(N)}^h |f(i/r)|^{2q}
\ll
\left(
\frac{r^2}{1+(1-q)\sqrt{\log\log(10r)}}
\right)^q
\end{align}
for $1\leq r\leq \sqrt{N}.$

Now suppose that $\sqrt{N}<r\leq N$ and put $t:=N/r,$ so that
$1\leq t<\sqrt{N}.$ Since every
$f\in\mathcal{H}_2(N)$ is primitive, we have
$f|_2W_N=\epsilon_f f$ for some $|\epsilon_f|=1,$ and therefore
\begin{align*}
\left|f\left(\frac{i}{r}\right)\right|
=
\frac{r^2}{N}
\left|f\left(\frac{i}{t}\right)\right|.
\end{align*}
Using (\ref{eq:rleqsqrtn}) at $t=N/r,$ we obtain
\begin{align*}
\mathbb{E}_{f \in \mathcal{H}_2(N)}^h
\left|f\left(\frac{i}{r}\right)\right|^{2q}
&=
\left(\frac{r^2}{N}\right)^{2q}
\mathbb{E}_{f \in \mathcal{H}_2(N)}^h
\left|f\left(\frac{i}{t}\right)\right|^{2q}\\
&\ll
\left(\frac{r^2}{N}\right)^{2q}
\left(
\frac{t^2}{1+(1-q)\sqrt{\log\log(10t)}}
\right)^q\\
&=
\left(
\frac{r^2}{1+(1-q)\sqrt{\log\log(10N/r)}}
\right)^q.
\end{align*}
Since $R=N/r$ in this range, the theorem follows for $1\leq r\leq N.$
\end{proof}

\begin{proof}[Proof of Theorem \ref{thm:modform} for $r>N$]
Let $f \in \mathcal{H}_k(N).$ By definition, we have
\begin{align*}
\left|f \left( \frac{i}{r} \right) \right|
=
\left(\frac{r^2}{N} \right)^{k/2}
\left|(f |_k W_N)\left( \frac{ir}{N} \right) \right|,
\end{align*}
which gives
\begin{align} \label{eq:afterfricke}
\mathbb{E}_{f \in \mathcal{H}_k(N)}^h
\left|f \left( \frac{i}{r} \right) \right|^{2}
=
\left( \frac{r^2}{N} \right)^{k}
\mathbb{E}_{f \in \mathcal{H}_k(N)}^h
\left| (f |_k W_N)\left( \frac{ir}{N} \right) \right|^{2}.
\end{align}
Denote
\begin{align*}
\widehat{\mathcal{H}_k(N)}
:=
\{f|_kW_N:f\in\mathcal{H}_k(N)\}.
\end{align*}
Since the Fricke involution is unitary, the set 
$\widehat{\mathcal{H}_k(N)}$ is also an orthogonal basis of
$\mathcal{S}_k(\Gamma_0(N)),$ and hence
\begin{align} \label{eq:unitary}
\mathbb{E}_{f \in \mathcal{H}_k(N)}^h
\left| (f |_k W_N)\left( \frac{ir}{N} \right) \right|^{2}
=
\mathbb{E}_{g \in \widehat{\mathcal{H}_k(N)}}^h
\left| g\left( \frac{ir}{N} \right) \right|^{2}.
\end{align}

Let $g \in \widehat{\mathcal{H}_k(N)}.$ We write
\begin{align*}
g\left(  \frac{ir}{N} \right)
&=
\lambda_g(1)e^{-\frac{2\pi r}{N}}
+
\sum_{j \geq 0}
\sum_{2^j<n \leq 2^{j+1}}
\lambda_g(n)n^{\frac{k-1}{2}}
e^{-\frac{2\pi n r}{N}}.
\end{align*}
By Lemma \ref{lem:largesieve} with $x=1,$ we have
\begin{align*}
\mathbb{E}_{g \in \widehat{\mathcal{H}_k(N)}}^h
|\lambda_g(1)|^2
\ll_k 1.
\end{align*}
Also, applying Lemma \ref{lem:largesieveineq2} for each $j \geq 0,$
we obtain
\begin{align*}
\mathbb{E}_{g \in \widehat{\mathcal{H}_k(N)}}^h
\left|
\sum_{2^j<n \leq 2^{j+1}}
\lambda_g(n)n^{\frac{k-1}{2}}
e^{-\frac{2\pi n r}{N}}
\right|^2
&\ll_k
\left(1+\frac{2^j}{N}\right)
\sum_{2^j<n \leq 2^{j+1}}
n^{k-1}e^{-\frac{4\pi nr}{N}}\\
&\ll_k
2^{j(k+1)}
\exp\left(-2^j\cdot\frac{4\pi r}{N}\right).
\end{align*}
Therefore, Minkowski's inequality gives
\begin{align*}
\left(
\mathbb{E}_{g \in \widehat{\mathcal{H}_k(N)}}^h
\left|g\left(\frac{ir}{N}\right)\right|^2
\right)^{1/2}
&\ll_k
e^{-\frac{2\pi r}{N}}
+
\sum_{j\geq0}
2^{\frac{j(k+1)}{2}}
\exp\left(-2^j\cdot\frac{2\pi r}{N}\right).
\end{align*}
Since $r>N,$ this is
$\ll_k \exp\left(-2\pi r/N\right),$
and hence
\begin{align*}
\mathbb{E}_{g \in \widehat{\mathcal{H}_k(N)}}^h
\left|g\left(\frac{ir}{N}\right)\right|^2
\ll_k
\exp\left(-\frac{4\pi r}{N}\right).
\end{align*}
Combining this with (\ref{eq:afterfricke}) and (\ref{eq:unitary}), we obtain
\begin{align*}
\mathbb{E}_{f \in \mathcal{H}_k(N)}^h
|f(i/r)|^2
\ll_k
\left(\frac{r^2}{N}\right)^k
\exp\left(-\frac{4\pi r}{N}\right).
\end{align*}
Finally, Jensen's inequality gives
\begin{align*}
\mathbb{E}_{f \in \mathcal{H}_k(N)}^h
|f(i/r)|^{2q}
&\leq
\left(
\mathbb{E}_{f \in \mathcal{H}_k(N)}^h
|f(i/r)|^2
\right)^q\\
&\ll_k
\left(
\left(\frac{r^2}{N}\right)^k
\exp\left(-\frac{4\pi r}{N}\right)
\right)^q.
\end{align*}
Therefore, the theorem follows whenever $r>N$ as well, and the proof is complete.
\end{proof}

\printbibliography


\end{document}